\newcommand{\Addresses}{{
		\bigskip
		
		Zhenlin Ran, \textsc{University of Newcastle, Australia}\par\nopagebreak
		\textit{E-mail address:} \texttt{zhenlin.ran@newcastle.edu.au}
}}
\title{\Large HEIGHTS AND SINGULAR MODULI OF DRINFELD MODULES}
\author{Zhenlin Ran}
\date{}
\begin{document}

\maketitle

\begin{abstract}
	Let $q$ be an odd number and $q>5$, and $\fq$ be a finite field of $q$ elements. We prove that at most finitely many singular moduli of rank 2 $\fq[t]$-Drinfeld modules are algebraic units. In particular, we develop some techniques of heights of Drinfeld modules to approach it.
\end{abstract}

\tableofcontents

\section{Introduction}
A \textit{singular modulus} is the $j$-invariant of a CM elliptic curve over the complex numbers. It is well-known that a singular modulus is an algebraic integer. In 2015, Habegger proved the following theorem \cite{ha14}:

\begin{thm}
At most finitely many singular moduli are algebraic units.
\end{thm}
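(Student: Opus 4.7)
The strategy is a product-formula argument: assuming for contradiction that infinitely many singular moduli are algebraic units, I would exhibit a positive lower bound for the archimedean sum of log-absolute-values of their Galois conjugates, which contradicts the unit hypothesis for all but finitely many discriminants.

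Let $\alpha = j(\tau)$ be a singular modulus of discriminant $D$. By CM theory, $\alpha$ generates the ring class field of the associated imaginary quadratic order, and its Galois conjugates over $\mathbb{Q}$ are $j(\tau_1), \ldots, j(\tau_{h(D)})$, where the $\tau_i$ range over the reduced CM points of discriminant $D$ in the standard fundamental domain for $\mathrm{SL}_2(\mathbb{Z})$. Since singular moduli are algebraic integers, if $\alpha$ is also a unit then $|\alpha|_v = 1$ at every non-archimedean place of $K = \mathbb{Q}(\alpha)$, and the product formula collapses to
\[
\sum_{i=1}^{h(D)} \log |j(\tau_i)| \;=\; \log\bigl|N_{K/\mathbb{Q}}(\alpha)\bigr| \;=\; 0.
\]
I would try to show that the left-hand side in fact tends to $+\infty$ with $|D|$, yielding a contradiction for all but finitely many discriminants.

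The positive main term comes from the reduced CM point with leading coefficient $a=1$, which has $\mathrm{Im}(\tau_1) = \sqrt{|D|}/2$, so that the $q$-expansion $j(\tau)=e^{-2\pi i\tau}+744+O(e^{2\pi i \tau})$ gives $\log|j(\tau_1)| = \pi\sqrt{|D|}+O(1)$. For the remaining conjugates, those with $a \geq 2$ have $\mathrm{Im}(\tau_i) \leq \sqrt{|D|}/4$, so each still satisfies $\log|j(\tau_i)| = 2\pi\,\mathrm{Im}(\tau_i) + O(1)$ whenever $\mathrm{Im}(\tau_i)$ is not too small; combined with equidistribution of CM points with respect to the hyperbolic measure (Duke, Clozel--Ullmo), this makes the bulk contribution non-negative up to an error of lower order than $\sqrt{|D|}$.

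The delicate step, and the one I expect to be the main obstacle, is ruling out a large negative contribution from conjugates $\tau_i$ lying very close to $\rho = e^{2\pi i/3}$, the unique zero of $j$ in the fundamental domain: near $\rho$, $j$ vanishes to order three, so $\log|j(\tau_i)|$ could in principle be extremely negative. One must prove an effective lower bound for $|j(\tau_i)| = |j(\tau_i)-j(\rho)|$ that is uniform in $i$ and polynomial in $|D|^{-1}$. I would attack this through the Gross--Zagier norm formula for differences of singular moduli, taking the second discriminant to be $-3$: for fundamental $D$ coprime to $3$, the formula controls each prime power dividing $N_{K/\mathbb{Q}}(\alpha)$ in terms of splitting behaviour of primes in the imaginary quadratic field, forcing the cumulative negative contribution of near-$\rho$ conjugates to be $o(\sqrt{|D|})$. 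For non-fundamental orders, one needs to extend this control, which is the genuinely new arithmetic input. Once it is in place, the positive main term $\pi\sqrt{|D|}$ dominates, contradicting the vanishing of the sum and forcing only finitely many singular moduli to be units.
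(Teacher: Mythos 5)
This theorem is not proved in the paper; it is Habegger's 2015 result, cited in the introduction as motivation for the Drinfeld analogue. So your proposal is really being measured against Habegger's published argument as the paper describes it, and there it diverges in two substantive ways. First, the role of equidistribution is misassigned. The bulk conjugates with $|j(\tau_i)|\geq 1$ contribute non-negatively to $\sum_i\log|j(\tau_i)|$ trivially, with no input needed; the Clozel--Ullmo/Duke equidistribution theorem is used precisely for the opposite purpose, to control the \emph{number} of conjugates falling close to $\rho$, which is the step you label as the ``main obstacle.'' Your proposal instead reaches for the Gross--Zagier norm formula to handle that near-$\rho$ mass, which is the mechanism behind Li's later, effective proof (the paper explicitly flags that as ``a totally different approach''), not Habegger's.

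Second, and more seriously, the positive term you isolate is too small for the argument to close. The single $a=1$ conjugate gives $\log|j(\tau_1)|=\pi\sqrt{|D|}+O(1)$, but the class number satisfies $h(D)=|D|^{1/2+o(1)}$, so a collection of $h(D)$ near-$\rho$ conjugates each contributing merely $O(\log|D|)$ (the best one can hope from a Liouville-type lower bound on $|\tau_i-\rho|$) can already overwhelm $\pi\sqrt{|D|}$. Habegger's actual lower bound is not $\pi\sqrt{|D|}$ for the \emph{sum}; it is the height bound $h(J)\gg\log|D|$ coming from the Faltings height of CM elliptic curves together with Chowla--Selberg/Colmez and Nakkajima--Taguchi for non-maximal orders, giving $\sum_i\log^+|j(\tau_i)|\gg h(D)\log|D|$ — a target larger by a factor of roughly $h(D)$. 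This is exactly why the paper devotes its Section 3 and Lemma 5.9/Theorem 5.11 to Taguchi heights, isogeny variation, and the function-field Colmez theorem of Wei: the lower bound must be of height type, not a single $q$-expansion term. Without replacing your main term by that Faltings-height lower bound, the proposed comparison cannot produce the desired contradiction, even if the Gross--Zagier control of the near-$\rho$ conjugates were in place.
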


His proof employs many classical tools from Diophantine geometry. The main idea of his proof is to bound the Weil height of a unitary singular modulus. In particular, assuming the singular modulus is a unit, an upper bound of its Weil height could be obtained by applying an equidistribution theorem from Clozel and Ullmo \cite[Section 2.3]{cu04}. This is the biggest difficulty of the entire proof as the number of singular moduli with big Galois orbits is hard to control.\\

In this paper, we consider a function field analogue and prove the same result for Drinfeld $\fq[t]$-modules.\\

Let $C$ be a smooth, projective and geometrically irreducible curve over a finite field $\fq$ which has $q$ elements. Fix a closed point $\infty\in C$ and let $A$ be the ring of functions on $C$ regular outside $\infty$. For instance, if $C$ is a projective line, then $A=\fq[t]$ for some $t$ transcendental over $\fq$. A \textit{Drinfeld $A$-module} over some scheme $S$ over $A$ is a pair $(\G_{a,\Lcal},\phi)$ such that $\G_{a,\Lcal}$ is a line bundle over $S$ given by an invertible sheaf $\Lcal$ and $\phi$ is a ring homomorphism from $A$ to $\End(\G_{a,\Lcal})$ with some extra conditions (cf. Definition \ref{dfdm}). It is well-known that Drinfeld $A$-modules of rank 2 are the analogue of elliptic curves. Most of the concepts and results of elliptic curves could be found for Drinfeld $A$-modules of rank 2. For example, we can define singular modulus of Drinfeld $\fq[t]$-modules in the same way as elliptic curves. The main theorem of this paper is:

\begin{thm}\label{mthm}
	Let $q$ be odd and $q>5$. There are at most finitely many singular moduli of rank 2 Drinfeld $\fq[t]$-modules that are algebraic units.
\end{thm}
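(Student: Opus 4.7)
The plan is to transport Habegger's strategy from number fields to the function-field setting, with $K := \fq(t)$ playing the role of $\mathbb{Q}$ and the Drinfeld upper half-plane $\Omega$ playing the role of $\mathbb{H}$. Singular moduli of rank $2$ Drinfeld $\fq[t]$-modules are algebraic over $K$, and the argument works throughout with a Weil height $h$ on $\overline{K}$ built from the standard set of places of $K$ (those coming from monic irreducible polynomials in $A=\fq[t]$ together with the place at infinity). The aim is to produce an upper and a lower bound on $h(j)$ which, if $j$ is assumed to be a unit, become incompatible once the CM discriminant is large enough.

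First I would record the function-field analogue of the classical integrality statement: a singular modulus is integral over $A$, so if $j$ is moreover a unit, the places of $K(j)$ lying over finite primes of $A$ contribute nothing to the height, whence
\[
h(j) \;=\; \frac{1}{[K(j):K]}\sum_{\sigma}\log\max\{1,|\sigma j|_{\infty}\},
\]
the sum running over the Galois orbit of $j$. The next step is an upper bound for this sum via a Drinfeld analogue of the Clozel--Ullmo equidistribution theorem: Galois orbits of CM points on the modular curve $\mathrm{GL}_2(A)\backslash\Omega$ equidistribute with respect to the natural hyperbolic measure as the discriminant grows. Combined with the $u$-expansion of the Drinfeld $j$-function, which yields an explicit asymptotic for $\log|j(\tau)|_{\infty}$ as $\tau$ approaches the cusp, the equidistribution converts the pointwise contribution of deep-in-cusp conjugates into an averaged bound strictly smaller than the worst individual term, giving $h(j) = o(q^{\deg|\Delta|})$ as $\deg|\Delta|\to\infty$.

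For the lower bound I would exhibit a conjugate $\tau_0$ of the CM point $\tau$ that lies sufficiently high in a fundamental domain for $\Omega/\mathrm{GL}_2(A)$. Such a $\tau_0$ arises from a Gauss-reduction argument on ideal classes in the CM order, exactly paralleling the classical reduction of binary quadratic forms; its contribution to $\log|j(\tau_0)|_{\infty}$ yields $h(j)\gtrsim q^{\deg|\Delta|/2}/[K(j):K]$. A function-field Brauer--Siegel estimate bounds $[K(j):K]$ from above as a slow-growing function of $\deg|\Delta|$, so the lower bound eventually dominates the upper bound, forcing all but finitely many singular moduli to fail to be units.

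The main obstacle, exactly as in Habegger's proof, is the equidistribution step. What is actually needed is not merely convergence of the Galois-orbit measures to the hyperbolic measure but an effective form of it near the cusp, because $\log|j(\tau)|_{\infty}$ is unbounded there and a single ill-placed conjugate could wreck the averaged upper bound. The hypothesis $q>5$ odd is expected to enter precisely here, both to secure enough Hecke-orbit variation and to rule out small-characteristic pathologies (degenerate supersingular behaviour and insufficient room inside $\mathrm{PGL}_2(\fq)$) that would obstruct an effective equidistribution statement. The remaining ingredients — integrality at finite places, the explicit cusp expansion, and the height formalism on Drinfeld modular curves — are technical but follow well-established templates in the theory of Drinfeld modules and can be assembled from the preparatory sections of the paper.
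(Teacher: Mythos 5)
Your proposal reproduces Habegger's blueprint faithfully, but the crucial equidistribution step is exactly where the paper departs from that blueprint, and your plan would stall there. The introduction explicitly notes that no Drinfeld analogue of the Clozel--Ullmo equidistribution theorem is available; invoking ``a Drinfeld analogue of the Clozel--Ullmo equidistribution theorem'' as an input is therefore not a step you can take. The paper replaces that entire mechanism with an elementary arithmetic observation (Proposition~\ref{ptcp}): for a fixed CM discriminant $\delta$ and a fixed root $u \in \F_{q^2}\setminus\fq$ of the Drinfeld $j$-function, there is \emph{at most one} reduced triple $(a,b,c)\in T_\delta$ with $|z(a,b,c)-u| < \sqrt{|\delta|}^{-1}$. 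This pointwise counting statement, not an averaged equidistribution bound, controls the conjugates that sit close to the zeros of $j$; together with the local expansion of $j$ near its zeros (Lemma~\ref{jil}) it yields the upper bound $h(J) \le (1 + (q^2-q)/d)(q+1)\log\sqrt{|\delta|} + O_q(1)$. Note also that the small-$\tau$ behaviour of interest is near the \emph{zeros} of $j$ in the interior of the fundamental domain, not at the cusp: you bounded $h(J)$, but under the unit hypothesis the work is in bounding $h(J^{-1}) = h(J)$ via $\log^+|j(z_i)|^{-1}$.

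Your lower bound also diverges from the paper's and leaves a gap. You propose locating one ``tall'' conjugate plus a Brauer--Siegel control on $[K(j):K]$, which is the number-field heuristic, but the paper instead runs through Taguchi heights: a Nakkajima--Taguchi-type isogeny formula (Proposition~\ref{abdf}) comparing the CM-by-$\Ocal$ module to the CM-by-$\Ocal_K$ module, a lower bound for the stable Taguchi height of the maximal-order CM module via the Euler--Kronecker constant of $K$ (Lemma~\ref{lbth}, drawing on Wei's function-field Colmez theorem and Ihara's estimates), and a Mertens-type bound for the error term in the isogeny formula (Lemma~\ref{etad}). These produce the lower bound $h(J) \ge (q^2-1)(\tfrac12 - \tfrac{1}{\sqrt q + 1})\log\sqrt{|\delta|} + (\tfrac12 + \tfrac1{\sqrt q+1})\log|f_0| - \tfrac94\log\log|f_0| - C_q$. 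Finally, the role of $q>5$ is purely numerical and has nothing to do with Hecke-orbit variation or small-characteristic pathology: it is exactly the condition under which the lower-bound coefficient $(q^2-1)(\tfrac12 - \tfrac1{\sqrt q+1})$ exceeds the upper-bound coefficient $(1+(q^2-q)/d)(q+1)$ for $d$ large, so that the two inequalities are incompatible for $|\delta|$ large; Northcott then finishes the argument via the resulting bound on $h(z_i)$.
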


The strategy of proving our main theorem follows that of Habbeger. As we pointed out earlier, the key tool that Habegger uses to control the number of Galois orbits of quadratic imaginary numbers close to the roots of the $j$-function is an equidistribution theorem from Clozel and Ullmo, which enables him to obtain an upper bound for the Weil height of a unitary singular modulus. However, this idea does not work well for our case since to the best of the author's knowledge, there are not any equidistribution results for Drinfeld $A$-modules like the one of Clozel and Ullmo. Instead, our idea to address this issue is to study the arithmetic of quadratic imaginary points. Though our method for the case of Drinfeld $\fq[t]$-modules is somehow elementary, we can show that there is at most one quadratic imaginary point in a certain small neighbourhood of a root of the $j$-function (cf. Proposition \ref{ptcp}). Thus, we could also obtain an upper bound for the Weil height of unitary singular moduli of Drinfeld $\fq[t]$-modules.\\

On the other hand, Habegger also gives a lower bound for the Weil height of singular moduli that grows faster than the upper bound he obtained. Many tools for the case of elliptic curves were already known while the analogues for Drinfeld $A$-modules are not available. We follow Habegger's strategy and prove some analogous results for the case of Drinfeld modules, which will lead us to a lower bound for the Weil height of singular moduli for Drinfeld $\fq[t]$-modules. That is:

\begin{thm}
	Let $J$ be a singular modulus of rank 2 Drinfeld $\fq[t]$-module with corresponding discriminant $\delta$ with conductor $f_0$. There exists some computable constant $C_q$ with respect to $q$ such that
	$$h(J)\geq (q^2-1)\left( \frac{1}{2}-\frac{1}{\sqrt{q}+1}\right) \log\sqrt{|\delta|}+\left( \frac{1}{2}+\frac{1}{\sqrt{q}+1}\right)\log|f_0|-\frac{9}{4}\log\log |f_0|-C_q.$$
\end{thm}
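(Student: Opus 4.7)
The plan is to port Habegger's strategy from the elliptic-curve setting to Drinfeld $\fq[t]$-modules. The dictionary is: a singular modulus $J$ is the $j$-invariant of a rank $2$ CM Drinfeld module $\phi$; the role of the Chowla--Selberg / Colmez formula is played by its Drinfeld analogue; and the role of a Dirichlet $L$-function is played by the Weil $L$-function $L(s,\chi_\delta)$ of the quadratic character attached to the imaginary quadratic function field $K=\fq(t)(\sqrt{\delta_0})$. A structural advantage over the number-field case is that the Riemann Hypothesis for $L(s,\chi_\delta)$ is a theorem of Weil, so upper bounds on $L$-values that are conditional over $\mathbb{Q}$ are here unconditional.

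Write $\delta=f_0^2\delta_0$ with $\delta_0$ the fundamental discriminant. The first step is to compare $h(J)$ with a stable Taguchi-type differential height $h_{\mathrm{st}}(\phi)$: since $J$ is a ratio of Drinfeld modular forms and the discriminant form $\Delta$ for rank $2$ Drinfeld $\fq[t]$-modules has weight $q^2-1$, the leading comparison constant is exactly $q^2-1$, which accounts for the prefactor of the discriminant term in the statement. The second step is a Chowla--Selberg/Colmez-type formula in the Drinfeld setting, which writes $h_{\mathrm{st}}(\phi)$ as a contribution from the infinite place, proportional to $L'(0,\chi_\delta)/L(0,\chi_\delta)$, plus local contributions at the primes dividing $f_0$, each computable from an explicit Euler factor. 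Separating these contributions produces the two logarithmic main terms in the claimed lower bound.

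Third, I would bound the $L$-function. The class-number formula yields $L(0,\chi_\delta)\gg 1$, while Weil's RH gives $|L(0,\chi_\delta)|\leq (1+\sqrt{q})^{\deg\delta_0}$; ratioing accounts for both the coefficient $\tfrac12$ (from $\log\sqrt{|\delta|}$) and the shifts $\pm\tfrac{1}{\sqrt{q}+1}$, the latter being precisely the loss between the trivial and the Weil bound. The correction $-\tfrac94\log\log|f_0|$ then comes from bounding the number of distinct prime divisors of $f_0$ by the function-field version of $\omega(n)\ll \log n/\log\log n$ when estimating the finite Euler factors. The main obstacle will be the precise Colmez-type identity: known differential-height formulas for CM Drinfeld modules are sufficient for asymptotic bounds but must be sharpened to extract the explicit coefficients $\tfrac12\pm\tfrac{1}{\sqrt{q}+1}$, which requires a careful local computation at the primes dividing $f_0$ and a clean separation of the conductor and fundamental-discriminant contributions. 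Once that identity is in hand, the remainder is a direct combination of the Weil bound with standard arithmetic estimates, with all residual $q$-dependent discrepancies absorbed into the computable constant $C_q$.
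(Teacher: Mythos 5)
Your proposal identifies the right ingredients and matches the paper's overall strategy (port Habegger/Nakkajima--Taguchi to the Drinfeld setting, invoke a Colmez-type formula for the differential height of CM Drinfeld modules, use Weil RH for function fields to get unconditional bounds, and control the conductor contribution with a prime-counting estimate). But the proposal underspecifies the crucial mechanism and acknowledges the gap itself when you write that the Colmez-type identity ``must be sharpened to extract the explicit coefficients.'' In the paper the heavy lifting is done not by an $L$-function bound applied directly to $h_{\mathrm{st}}(\phi)$ but by a chain of three explicit comparisons that you would need to reproduce.

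First, the passage from $h(J)$ to a differential height is not via ``weight of the discriminant form''; the paper works with the \emph{graded height} $h_G(\phi)$, which by its very definition satisfies the exact identity $h(J)=(q^2-1)h_G(\phi)$ (Remark after the definition of $h_G$), while the relation to the Taguchi height is only the inequality $h_G(\phi)\geq h^{\mathrm{st}}_{\mathrm{Ta}}(\phi)$. Working directly with $h_{\mathrm{st}}$ as you propose does not give the needed equality and loses the mechanism for transporting the isogeny formula back to $h(J)$.

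Second, the paper does not apply the Colmez formula to $\phi$ itself but to the associated Drinfeld module $\phi^0$ with CM by the \emph{maximal} order $\Ocal_K$, and then brings in the order-$\Ocal$ module by the Nakkajima--Taguchi isogeny formula (Proposition~\ref{abdf}), which provides exactly the $\log|f_0|$ term and the sum $\frac12\sum_{v\mid f_0}\deg(v)e_{f_0}(v)$. This two-step decomposition is what produces the separate coefficients on $\log\sqrt{|\delta|}$ and $\log|f_0|$; collapsing it into a single ``infinite-place contribution plus local contributions'' obscures where the isogeny formula and where the maximal-order bound each enter. Third, because the isogeny formula is stated for Taguchi heights but the goal is a bound on $h_G$, one needs the archimedean correction $|h_G^\infty(\phi_2')-h_G^\infty(\phi_1')|$, bounded in the paper via Breuer--Pazuki--Razafinjatovo, which you do not mention. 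Finally, the $\pm\tfrac1{\sqrt q+1}$ shift comes specifically from Ihara's explicit lower bound on the Euler--Kronecker constant $\gamma_K$, not from a bound on $L(0,\chi_\delta)$ against the class-number formula; these are related via $\zeta_K=\zeta_k L(\cdot,\chi_\delta)$ but the usable estimate in the paper is Ihara's inequality $\gamma_K/(2\ln q)\geq -g_K/(\sqrt q+1)+(q-3)/(4(q-1))$, which plugs directly into Wei's formula for $h^{\mathrm{st}}_{\mathrm{Ta}}(\phi^0)$. Without these concrete pieces, the proposal is a correct roadmap but not yet a proof.
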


In Drinfeld $A$-modules, the analogue of Faltings height is Taguchi height which was introduced by Taguchi in \cite{tag1991} for the case of finite characteristic and in \cite{tag1993} for the case of generic characteristic. In particular, we obtain an analogous result of Nakkajima and Taguchi for Drinfeld $\fq[t]$-modules, which gives an explicit description for the variation of the Taguchi heights of rank 2 Drinfeld $\fq[t]$-modules under isogeny, where one Drinfeld $\fq[t]$-module of rank $2$ has CM by arbitrary order and the other one has CM by the maximal order. From this point on, we can obtain the variation of the graded heights of the corresponding Drinfeld $\fq[t]$-modules. As the graded height of Drinfeld $A$-modules is the generalization of the Weil height of $j$-invariants for Drinfeld $A$-modules, we thus obtain the above lower bound for the Weil height of singular moduli by applying a theorem of Wei where he proves the Colmez conjecture for Drinfeld $A$-modules.\\

More recently Bilu, Habegger and K\"uhne proved the stronger result that there are no singular moduli that are algebraic units in the case of elliptic curves \cite{bhk2020}. Their approach is the same as Habegger's while the method in \cite{bhk2020} is effective. What makes Habegger's proof ineffective is the equidistribution theorem he applied. Compared to this work, a totally different approach using Gross-Zagier, Gross-Kohnen-Zagier and their generalizations was given by Li \cite{li2021}. Li could deduce the same result of \cite{bhk2020} as a special case of \cite[Theorem 1.1]{li2021}.\\

This paper is organized as follows:

\begin{itemize}
\item In Section 2, we recall the concept of Drinfeld $A$-modules over arbitrary $A$-schemes and the minimal models of Drinfeld $A$-modules. In particular, we study some properties of the minimal models.
\item In Section 3, three different types of heights are introduced and their relationships are studied. In particular, we study the variation of heights of Drinfeld $A$-modules under isogenies.
\item In Section 4, we study the number of quadratic imaginary points near the root of the $j$-function of Drinfeld $A$-modules.
\item In Section 5, we bound the Weil height of $j$-invariants and prove the main theorem.
\item The appendix is written to give some results related to CM Drinfeld $A$-modules that could not be found in some common literature. In particular, this section is devoted to the proof of Proposition \ref{abdf}.
\end{itemize}

\noindent \textbf{Acknowledgements}\quad The author is very grateful to his advisor Florian Breuer for advising him such an interesting project and for being supportive in all aspects throughout his PhD study. He would also thank Marc Hindry and Urs Hartl for inviting him to visit Paris and M\"unster respectively where the author had delightful and helpful discussions with them and learnt mathematics from them. He also appreciate Fabien Pazuki and Fu-Tsun Wei for helpful discussions. Many thanks go to Philipp Habegger for his suggestion of working on Lemma \ref{etad}.

\section{Minimal models of Drinfeld $A$-modules}

In this section, we review the definition of Drinfeld $A$-modules and the associated minimal models. In particular, we prove an analogue of a result of N\'eron models of abelian varieties.\\

Let $C$ be a smooth, projective and geometrically irreducible curve over $\fq$, where $\fq$ is a finite field with $q$ elements. Let $\infty\in C$ be a closed point. We set $A:=\Gamma(C\backslash\{\infty\}, \Ocal_C)$ to be the ring of functions regular outside $\infty$. We fix $k$ to be the field of fractions of $A$. Let $M_k$ denote the set of all places of $k$. 
To each place $v\in M_k$ we associate an absolute value $|\cdot|_v$ as follows: $$|x|_v=q^{-\deg(v)v(x)},\ \forall x\in k.$$ 
Let $k_{\infty}$ denote the completion of $k$ with respect to $\infty$ and $\Cinf$ denote the completion of an algebraic closure of $k_{\infty}.$\\

Throughout this paper, we denote by $\log$ the logarithm function of base $q$ and assume that \textit{$q$ is odd}.

\begin{defn}\label{dfdm}
Let $S$ be a scheme over $\spec(A)$ with structure morphism $i: S\ra \spec(A)$. A \textit{Drinfeld $A$-module} over $S$ is a pair $\textbf{E}=(\G_{a,\Lcal},\phi)$, where $\Lcal$ is an invertible sheaf over $S$ and $\phi$ is a homomorphism from $A$ to $\End(\G_{a,\Lcal})$ such that 
\begin{enumerate}[(1)]
\item $\partial \circ \phi=i^{\#}$, where $i^{\#}:A\ra \Ocal_S(S)$ and $\partial$ is the natural homomorphism of taking the constant term of $\End(\G_{a,\Lcal})$.;
\item for any $0\neq a\in A$, the morphism $\phi(a)$ is finite;
\item at any point of $S$ the degree of $\phi(a)$ is $>1$ for some $a\in A$.
\end{enumerate}
\end{defn}

\begin{rem}
\begin{enumerate}[(1)]
\item If $\varphi\in \End(\G_{a,\Lcal})$ then $\varphi=\sum_{n\geq 0}a_n\tau_p^n$, where $a_n\in \Gamma(S,\Lcal^{1-p^n})$ and $\tau_p$ is the relative $p$-Frobenius. So $D\circ \phi$ lands in $\Ocal_S(S)$.
\item We denote the subalgebra of $\fq$-linear endomorphisms of $\End(\G_{a,\Lcal})$ by $\End_q(\G_{a,\Lcal})$. The elements of $\End_q(\G_{a,\Lcal})$ are of the form $\sum_n a_n\tau_q^n$ for $a_n\in \Gamma(S,\Lcal^{1-q^n})$.
\item For any $\varphi=\sum_{n\geq 0}a_n\tau_p^n\in \End(\G_{a,\Lcal})$, the sum is locally finite \cite[Remark 1.2.4]{leh09}.
\item If $S$ is connected, then there exists an integer $r>0$ such that $\deg(\phi(a))=|a|_{\infty}^r$ \cite[Proposition 2.2.2]{leh09}. The integer $r$ is called the \textit{rank} of the Drinfeld module $\textbf{E}$.
\item A \textit{homomorphism} (resp. \textit{isogeny}) from $(\G_{a,\Lcal},\phi)$ to $(\G_{a,\mathcal{M}},\psi)$ is a (resp. finite) homomorphism $f:\G_{a,\Lcal} \ra \G_{a,\mathcal{M}}$ such that $f\circ \phi(a)=\psi(a)\circ f$ for all $a\in A$.
\end{enumerate}
\end{rem}

We abbreviate $\phi(a)$ as $\phi_a$ for $a\in A$. If $S$ is the spectrum of a field, then the line bundle on $S$ is unique up to isomorphism in which case we specify a Drinfeld module over $S$ only by $\phi$, which we implicitly take a trivialization and regard such Drinfeld $A$-modules as a ring homomorphism $\phi$ such that (1)-(3) of Definition \ref{dfdm} hold. In this paper, we assume for any Drinfeld module $(\G_{a,\Lcal},\phi)$, $\phi$ is $q$-linear and $\tau:=\tau_q$.

\begin{defn}\label{mimd} (Taguchi)
Let $S$ be an integral normal scheme of finite type over $A$ with function field $F$. Let $\phi$ be a Drinfeld $A$-module over $F$. A \textit{model} $\mathscr{M}=(\G_{a,\Lcal},\varphi,f)$ of $\phi$ over $S$ is an $A$-module scheme $\textbf{E}=(\G_{a,\Lcal},\varphi)$ over $S$ such that $f:\textbf{E}\times_S \spec(F)\ra \phi$ is an isomorphism of Drinfeld modules over $F$. A model $\mathscr{M}$ of $\phi$ over $S$ is \textit{minimal} if given any other model $\mathscr{N}=(\G_{a,\Lcal'},\varphi',f')$, there exists a unique homomorphism $\mathscr{N}\ra \mathscr{M}$ which induces an isomorphism on the generic fibre compatible with $f$ and $f'$.
\end{defn}

\begin{prop}\cite[Proposition 2.2]{tag1993}
Let $S$ and $F$ be as in Definition \ref{mimd}, and we further assume $S$ is a scheme on which the two concepts of Weil divisors and Cartier divisors coincide. Then there exists a minimal model over $S$ of $\phi$.
\end{prop}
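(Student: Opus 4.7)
The plan is to construct a minimal model by spreading $\phi$ out generically, optimising the trivialisation at each codimension one point of $S$, and then gluing via the hypothesis that Weil and Cartier divisors coincide.

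First, since $\phi$ is determined by the images $\phi_a=\sum_n c_n(a)\tau^n$ of a finite generating set of $A$, inverting the finitely many denominators of the $c_n(a)\in F$ yields a model of $\phi$ with trivial line bundle over some dense open $U\subseteq S$. In particular, only finitely many codimension one points of $S$ lie outside $U$, and at every other codimension one point the chosen model already extends.

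Next, fix a codimension one point $D$, so that $\Ocal_{S,D}$ is a discrete valuation ring with uniformiser $\pi_D$ and normalised valuation $v_D$. Every line bundle on $\spec\Ocal_{S,D}$ is trivial, so any model of $\phi$ over $\Ocal_{S,D}$ is obtained from $\phi$ by a scalar change of coordinate $x\mapsto ux$ with $u\in F^\times$, transforming $\phi_a$ into $u^{-1}\phi_a u=\sum_n u^{q^n-1}c_n(a)\tau^n$. Requiring all new coefficients to lie in $\Ocal_{S,D}$ imposes $v_D(u)\ge \lceil -v_D(c_n(a))/(q^n-1)\rceil$ for each generator $a$ and each $n$ with $c_n(a)\ne 0$, only finitely many of which are nontrivial, so a smallest admissible value $m_D$ of $v_D(u)$ exists. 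This produces a locally minimal model at $D$, unique up to scaling by units of $\Ocal_{S,D}$.

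The integers $m_D$ vanish outside the finite set of codimension one points in $S\setminus U$, so they define a Weil divisor $\Delta=\sum_D m_D\cdot D$ on $S$. The hypothesis that Weil and Cartier divisors coincide promotes $\Delta$ to a Cartier divisor, giving a line bundle $\Lcal_{\min}=\Ocal_S(\Delta)$; since the local minimal models all restrict to $\phi$ on the generic fibre, they glue to a Drinfeld module $(\G_{a,\Lcal_{\min}},\varphi_{\min})$ together with the tautological generic identification $f$. For universality, given any other model $\mathscr{N}=(\G_{a,\Lcal'},\varphi',f')$, the identifications $f, f'$ generically identify $\Lcal'$ with $\Lcal_{\min}$; at each codimension one point $D$, the inequality $v_D(u')\ge m_D$ forces a canonical morphism of local models $\mathscr{N}_{(D)}\to \mathscr{M}_{(D)}$, and these glue to the unique required morphism $\mathscr{N}\to\mathscr{M}$.

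The main obstacle is the local optimisation step: one must check that the conjugate by the optimising $u$ really is a Drinfeld module over $\Ocal_{S,D}$ of the same rank as $\phi$---equivalently that scaling preserves the finite flatness of $\phi_a$ and the fibrewise rank, which reduces to tracking the leading coefficient---and that the local model is canonical up to unique unit isomorphism. The Weil--Cartier hypothesis is used essentially in the gluing step: without it $\Delta$ would at best define a reflexive rank one sheaf rather than a genuine line bundle.
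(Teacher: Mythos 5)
Your strategy---spread $\phi$ out over a dense open, optimise the trivialisation at each codimension-one point $D$ to obtain integers $m_D$, package the $m_D$ into a Weil divisor $\Delta$, invoke the Weil--Cartier hypothesis to turn it into an invertible sheaf, and glue---is the right one and is, in essence, Taguchi's argument in \cite[Proposition 2.2]{tag1993}, which the paper cites without reproducing. The local constraints $v_D(u)\ge\lceil -v_D(c_n(a))/(q^n-1)\rceil$, the reduction to a finite generating set, and the codimension-one gluing on a normal scheme are all correct.

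A few details need tidying. First, the sign: a normalised model (Remark after Definition~\ref{mimd}) has $c_n(a)\in\Lcal^{1-q^n}$, i.e.\ $v_D(\Lcal)\ge -v_D(c_n(a))/(q^n-1)$, so minimality forces $v_D(\Lcal_{\min})=m_D$, which with the usual convention is $\Ocal_S(-\Delta)$, not $\Ocal_S(\Delta)$. Second, it is not true that $m_D=0$ for every codimension-one point $D$ of $U$: if all the $c_n(a)$ vanish at $D$ to order at least $q^n-1$ then $m_D<0$, and the minimal line bundle must have valuation $m_D$ there too, or else the universal property fails against a model whose line bundle is strictly smaller at $D$. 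The support of $\Delta$ is still finite (the $c_n(a)$ have only finitely many codimension-one zeros and poles), so the proof survives, but the justification and the resulting $\Delta$ should range over all such $D$, not only over $S\setminus U$. Finally, the ``obstacle'' you raise at the end is not actually there: Definition~\ref{mimd} only asks a model to be an $A$-module scheme whose generic fibre recovers $\phi$, not a Drinfeld module over $S$, so you need not preserve finiteness of $\varphi_a$ or the fibrewise rank. Integrality of the coefficients together with $\partial\varphi_a=a$ (unchanged by conjugation by $u$) is all that is required; finiteness of $\varphi_a$ over all of $\spec R$ is precisely what fails at primes of bad reduction and is recovered only under good reduction, which is the content of Proposition~\ref{gmp}(1), not of this existence statement.
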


\begin{rem}
\begin{enumerate}[(1)]
\item If $\phi$ has a minimal model, then the minimal model is unique up to isomophism;
\item By checking on fibres we see that every model of $\phi$ over $S$ is smooth over $S$.
\item By \cite[Remark (4), p.299]{tag1993}, each model is isomorphic to a standard one. To avoid confusion with the term standard Drinfeld module, we call the standard model from \cite[Remark (4), p. 299]{tag1993} the \textit{normalized model}. That is, a \textit{normalized model} is a model whose generic fibre is exactly the given Drinfeld module over $F$.
\end{enumerate}
\end{rem}

Let $\phi$ be a Drinfeld $A$-module of rank $r$ over $F$ which is a field of finite degree over $k$, and let $R$ be the integral closure of $A$ in $F$. For any $a\in A$, we write: 
$$\phi_a=a\tau^0+\cdots +\Delta_a\tau^{r\deg(a)}.$$
Let $\mathscr{M}=(\G_{a,\Lcal},\varphi)$ be the normalized minimal model of $\phi$ over $R$. Then $\Delta_a\in \Lcal^{1-q^{r\deg(a)}}$. 


\begin{rem}
We can identify the invertible sheaves over $\spec(R)$ with the fractional ideals of $R$ in $F$ by isomorphisms of $R$-modules. Without causing any ambiguity, we implicitly use this identification in this paper and think of the invertible sheaves over $\spec(R)$ as fractional ideals of $R$ in $F$. Thus, we do not distinguish invertible sheaves over $\spec(R)$ and fractional ideals of $R$ in $F$. Moreover, we do not distinguish prime ideals of $R$ and the corresponding valuations on $F$. That is, if we take a prime ideal $v\in \spec(R)$, by $v(I)$ we mean the valuation of the fractional ideal $I$ at $v$. 
\end{rem}

\begin{prop}\label{gmp}
Let $\phi,R,F$ be as above.
\begin{enumerate}
\item If $\phi$ has everywhere good reduction on $R$, then the minimal model $\mathscr{M}$ of $\phi$ over $R$ is a Drinfeld $A$-module over $R$.
\item If $\phi$ has everywhere good reduction on $R$, $\phi'$ is another Drinfeld module over $F$ and $f:\phi\ra \phi'$ is an isogeny of Drinfeld modules over $F$, then $f$ is an isogeny of their normalized minimal models.
\end{enumerate}
\end{prop}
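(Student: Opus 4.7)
My plan is to work locally on $\spec R$ in both parts and to exploit the universal property of the minimal model throughout; compatibility of the minimal model with flat localization reduces everything to a fixed prime $v\in \spec R$ and the DVR $R_v$.

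For (1), I would fix $v$ and let $\mathscr{M}_v$ denote the localization of $\mathscr{M}$ at $v$. The good reduction hypothesis gives a Drinfeld $A$-module $\mathscr{N}_v$ over $R_v$ whose generic fibre is identified with $\phi$, so $\mathscr{N}_v$ is in particular a model of $\phi$ over $R_v$ in the sense of Definition \ref{mimd}. The universal property then supplies a unique morphism of $A$-module schemes
\[
u_v:\mathscr{N}_v\longrightarrow \mathscr{M}_v
\]
restricting to the identity on the generic fibre. I would write $u_v$ as a skew polynomial in $\tau$ after trivializing the two underlying line bundles, express $\varphi^{\mathscr{M}_v}_a$ in terms of $\varphi^{\mathscr{N}_v}_a$ via $u_v$, and read off that the rank-$r$ leading coefficient of $\varphi^{\mathscr{M}_v}_a$ is nonzero on both fibres of $\spec R_v$. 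This verifies conditions (2) and (3) of Definition \ref{dfdm} at $v$; condition (1) is inherited from the generic fibre. Since $v$ was arbitrary, $\mathscr{M}$ is a Drinfeld $A$-module over $R$.

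For (2), by (1) and the fact that good reduction is preserved under isogeny, both $\mathscr{M}$ and $\mathscr{M}'$ are already Drinfeld $A$-modules over $R$. Fix $v$ and consider the scheme-theoretic closure $K_v\subset \mathscr{M}_v$ of $\ker f\subset \phi$. Because $\mathscr{M}_v$ is smooth over the DVR $R_v$ and $\ker f$ is finite flat over $F$, the closure $K_v$ is a finite, flat, closed $A$-submodule scheme of $\mathscr{M}_v$. The quotient $\mathscr{M}_v/K_v$ is then a Drinfeld module over $R_v$ whose generic fibre is canonically $\phi'$, i.e.\ a model of $\phi'$ over $R_v$. The universal property of $\mathscr{M}'_v$ now yields a unique morphism $\mathscr{M}_v/K_v\to \mathscr{M}'_v$ extending the identity on the generic fibre, and composing with the quotient $\mathscr{M}_v\to \mathscr{M}_v/K_v$ gives an extension of $f$ over $R_v$. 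By the uniqueness clause these local extensions glue to a global morphism $\tilde f:\mathscr{M}\to \mathscr{M}'$ with generic fibre $f$, and finiteness of $\tilde f$ is read off fibrewise from the generic finiteness of $f$ together with rank preservation under specialization.

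The principal technical hurdle lies in part (1): one must ensure that $u_v$ does not collapse on the special fibre in a way that would kill the leading coefficient of $\varphi_a$. My proposed way out is to combine the minimality of $\mathscr{M}_v$ with an explicit computation on the coefficients of $u_v$; minimality pins $u_v$ down, up to the unavoidable line-bundle twist, to the identity, so that the Drinfeld structure on $\mathscr{N}_v$ is transported intact to $\mathscr{M}_v$.
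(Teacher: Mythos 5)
Your outline identifies the right objects but, in part (1), it places the entire weight of the argument on the claim that the morphism $u_v:\mathscr{N}_v\to\mathscr{M}_v$ supplied by minimality is an isomorphism (``minimality pins $u_v$ down, up to the unavoidable line-bundle twist, to the identity''). That claim \emph{is} the proposition in the local case: it says exactly that the good-reduction model is the minimal model. Nothing in your outline establishes it. If you write $u_v=u_0\tau^0$ (the identity on the generic fibre forces no higher $\tau$-terms) and compare coefficients via $u_v\varphi^{\mathscr{N}_v}_a=\varphi^{\mathscr{M}_v}_a u_v$, you get $\mathrm{coeff}_i(\varphi^{\mathscr{M}_v}_a)=u_0^{1-q^i}\,\mathrm{coeff}_i(\varphi^{\mathscr{N}_v}_a)$, and the leading coefficient of $\varphi^{\mathscr{M}_v}_a$ generates $\Lcal_{\mathscr{M}}^{1-q^{r\deg a}}$ precisely when $u_0$ generates $\mathrm{Hom}(\Lcal_{\mathscr{N}},\Lcal_{\mathscr{M}})$, i.e.\ when $u_v$ is an isomorphism. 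So the computation you propose does not ``read off'' the conclusion; it reformulates the question. The paper avoids this circularity by invoking Taguchi's valuation criterion for good reduction directly (Example, p.~301 of \cite{tag1993}): good reduction forces $v(\Delta_a)=v(\Lcal^{1-q^{r\deg a}})$ for all places $v$, so $\Delta_a$ is a generator and Lehmkuhl's criterion \cite[Prop.~1.2.6]{leh09} gives finiteness of $\phi_a$.

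For part (2), your route through the scheme-theoretic closure of $\ker f$ and the quotient $\mathscr{M}_v/K_v$ is a genuinely different and in principle viable strategy, but it inherits the same gap: the final step, that $\mathscr{M}_v/K_v\to\mathscr{M}'_v$ is an isomorphism (equivalently that $\tilde f$ is finite), is again the statement that a good-reduction model is minimal, and ``rank preservation under specialization'' is too vague to carry it. It also tacitly assumes the existence of the quotient Drinfeld module over $R_v$, which deserves a citation. The paper's proof is shorter and sidesteps both issues: it extends $f$ directly to a morphism of minimal models via \cite[Prop.~2.5]{tag1993}, compares coefficients in $f\phi_a=\phi'_af$ to get $f_n\Delta_a^{q^n}=\Delta'_a f_n^{q^{r\deg a}}$, and then uses the good-reduction valuation identities for $\Delta_a,\Delta'_a$ to conclude $v(f_n)=v(\Lcal'\Lcal^{-q^n})$, from which finiteness follows as in part (1). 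If you want to keep your approach, you should prove the ``good reduction model is minimal'' claim explicitly (this is essentially Taguchi's Example p.~301) before using it in both parts.
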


\begin{proof}
To prove the first statement, we can assume that $\mathscr{M}$ is normalized so that we are left to show that $\phi_a$ is finite for each $0\neq a\in A$. As $\phi$ has everywhere good reduction on $R$, we have $v(\Delta_a)=v(\Lcal^{1-q^{r\deg(a)}})$ for every place $v$ on $R$  \cite[Example, p.301]{tag1993}. This means that $\Delta_a$ corresponds to an isomorphism in $\End(\G_{a,\Lcal})$. By \cite[Proposition 1.2.6]{leh09} we see $\phi_a$ is finite for any $a\in A$. So the first statement is true.\\

To prove the second statement, we first write:
$$\phi'_a=a\tau^0+\cdots+\Delta_a'\tau^{r\deg(a)},\ \forall a\in A;\quad f=f_0\tau^0+\cdots +f_n\tau^n.$$For any $a\in A$ we have $f\phi_a=\phi_a'f$. By comparing the coefficients we get
\begin{equation}
f_n\Delta_a^{q^n}=\Delta_a'f_n^{q^{r\deg(a)}}.
\end{equation}
Let $\mathscr{M}'=(\G_{a,\Lcal'},\varphi')$ be the normalized minimal model of $\phi'$. Then $f$ extends uniquely to a morphism from $\mathscr{M}$ to $\mathscr{M}'$ whose generic fibre is $f$ \cite[Proposition 2.5]{tag1993}. We therefore denote the two morphisms by $f$ interchangeably without causing any ambiguity. Since $\phi$ has good reduction everywhere, so does $\phi'$. The above argument again implies $v(\Delta_a')=v(\Lcal'^{1-r\deg(a)})$. From (1) we see for every place $v$ on $R$
$$v(f_n)=v(\Lcal')-q^nv(\Lcal)=v(\Lcal'\Lcal^{-q^n}).$$Thus by the same argument in the proof of the first statement, $f$ is finite, hence an isogeny.
\end{proof}

\begin{rem}
This proposition actually indicates an analogue of a well-known result that if an abelian variety over a number field has good reduction everywhere then its Néron model is an abelian scheme, and moreover, an isogeny between abelian varieties with everywhere good reductions extends to a finite flat homomorphism between their Néron models. In our case, the flatness of $f$ is in consequence of the finiteness since a homomorphism between two line bundles is quasi-finite if and only if it is flat \cite[Proposition 1.2.5]{leh09}.
\end{rem}

\section{Heights}

In this section, we study Taguchi heights, Weil heights and graded heights. In particular, we calculate the variation of Taguchi heights and graded heights of rank 2 Drinfeld $\fq[t]$-modules under an isogeny.\\

\noindent {\large \textbf{Taguchi heights}}\medskip 

Let $A,k,k_{\infty}$ and $\Cinf$ be as in Section 2. In \cite{tag1993}, Taguchi introduced his so-called differential heights of Drinfeld $A$-modules which are now called Taguchi heights. He defines this concept in the case when the lattices associated to Drinfeld $A$-modules are free. Wei generalizes Taguchi's definition for arbitrary Drinfeld $A$-modules \cite[Section 5]{wei20}. We copy the following definition from Wei. For more details, the reader could refer to \cite[Section 4]{wei17} or \cite[Remark 2.10]{wei20}.

\begin{defn}Let $\Lambda$ be an $A$-lattice of rank $r$ in $\Cinf$, and let $\Ocal_{\infty}$ be the ring of $\infty$-adic integers in $k_{\infty}$. Choose an orthogonal $k_{\infty}$-basis $\{\lambda_i\}_{i=1}^r$ of $k_{\infty}\otimes \Lambda$ such that:
\begin{enumerate}[(1)]
\item $\lambda_i\in \Lambda$ for $1\leq i\leq r$;
\item $|a_1\lambda_1+\cdots+a_r\lambda_r|_{\infty}=\max\{|a_i\lambda_i|_{\infty}: 1\leq i\leq r\}$ for all $a_1,...,a_r\in k_{\infty}$;
\item $k_{\infty}\otimes \Lambda=\Lambda+(\Ocal_{\infty}\lambda_1+\cdots +\Ocal_{\infty}\lambda_r)$.
\end{enumerate}
The \textit{covolume} $D_A(\Lambda)$ of the $A$-lattice $\Lambda$ is defined as follows:
$$D_A(\Lambda):=q^{1-g_k}\cdot \left(\frac{\prod_{i=1}^r|\lambda_i|_{\infty}}{\#\left(\Lambda\cap \left(\Ocal_{\infty}\lambda_1+\cdots +\Ocal_{\infty}\lambda_r\right)\right)}\right)^{\frac{1}{r}}=\left(\frac{\prod_{i=1}^r|\lambda_i|_{\infty}}{\#(\Lambda/(A\lambda_1+\cdots+A\lambda_r))}\right)^{\frac{1}{r}},$$where $g_k$ is the genus of the field $k$.
\end{defn}

Let $F/k$ be a finite field extension and $R$ be the integral closure of $A$ in $F$. For each infinite place $w$ of $F$ lying over $\infty$, we can embed $F$ into $\C_{\infty}$ via $w$. Let $F_w$ be the completion of $F$ with respect to $w$ and we embd $F_w \xhookrightarrow{}  \Cinf$. A \textit{metrized line bundle} $(\Lcal, \|\cdot \|)$ on $\spec(R)$ is a projective $R$-module $\Lcal$ of rank $1$, together with norms $$\|\cdot\|_w:\Lcal \otimes_RF_w\ra \R$$for all infinite places $w$ of $F$. The \textit{degree} $\deg(\Lcal,\|\cdot \|)$ of a metrized line bundle $(\Lcal,\|\cdot \|)$ on $R$ is $$\deg(\Lcal, \|\cdot\|):=\log\#(\Lcal/lR)-\sum_{w|\infty}\epsilon_w\log \|l\|_w$$for some $l\in \Lcal$, and $\epsilon_w$ is the local degree at $w$. It is independent of the choice of $l$ by the product formula. We note that by taking a norm we implicitly indicate an extension of the absolute value $|\cdot |_{\infty}$ on $k$. In this above equation, the extension of absolute value is taken as the one remaining unchanged on $k$ while our normalization below is different. The reader should note in this definition we take the infinite absolute value on $F_w$ such that it remains the same on $k$.\\

Let $\phi$ be a Drinfeld $A$-module of rank $r$ over $F$ and $\mathscr{M}=(\G_{a,\Lcal},\varphi,f)$ the minimal model of $\phi$ over $R$, where $\textbf{E}=(\G_{a,\Lcal},\varphi)$ is an $A$-module scheme and $\Lcal$ is an invertible sheaf over $\spec(R)$. Since $\G_{a,\Lcal}\ra \spec(R)$ is smooth of finite type and relative dimension $1$, we see $\Omega_{\G_{a,\Lcal}/R}^1$ is locally free of rank $1$. Let $e:\spec(R)\ra \G_{a,\Lcal}$ be the unit section, then we set $$\omega_{\textbf{E}/R}:=e^*(\Omega_{\G_{a,\Lcal}/R}^1).$$Thus $\omega_{\textbf{E}/R}\cong \Lcal^{-1}$ which is the inverse of $\Lcal$ in $\pic(R)$. Without causing any ambiguity, we treat $\omega_{\textbf{E}/R}$ as a rank $1$ projective $R$-module.  Let $w$ be an infinite place of $F$ and $\textbf{E}_w$ be the Drinfeld module over $F_w$ by extension of scalars $R\ra F_w$. Let $\Lambda_w$ be the corresponding $A$-lattice of rank $r$ in $\Cinf$ (\cite[Theorem 4.6.9]{go98} or \cite[Proposition 2.1.5]{leh09}). If $x$ is the coordinate function of $\G_{a}/F_w$, then $dx$ is a generator of $\omega_{\textbf{E}_w/F_w} (\cong \Lcal^{-1}\otimes_RF_w)$. We put a metric $\|\cdot\|_w$ on $\omega_{\textbf{E}_w/F_w}$ by $$\|dx\|_w:=D_A(\Lambda_w).$$
\begin{defn}
Let $\phi$ be a Drinfeld $A$-module of rank $r$ over $F$ and $\mathscr{M}=(\G_{a,\Lcal},\varphi,f)$ be its minimal model over $R$, where $\textbf{E}=(\G_{a,\Lcal},\varphi)$ is an $A$-module scheme over $R$. The \textit{Taguchi height} of $\phi$ over $F$ is $$h_\ta(\phi/F):=\frac{1}{[F:k]}\deg(\omega_{\textbf{E}/R},\|\cdot\|),$$where the metric $\|\cdot\|$ is given as above.
\end{defn}

It is obvious the Taguchi height of a Drinfeld $A$-module $\phi$ depends on the choice of the field $F$. However, it will remain unchanged when taking a finite field extension of $F$ if $\phi$ has everywhere stable reduction over $F$. Since every Drinfeld $A$-modules has everywhere potential stable reduction, we can define the \textit{stable Taguchi height} of $\phi$ to be $$h_{\ta}^{\text{st}}(\phi):=h_{\ta}(\phi/F'),$$
where $F'$ is a field of finite degree over $F$ on which $\phi$ has everywhere stable reduction. The following isogeny lemma is standard.

\begin{lem}\label{isogl}
Let $f:\phi_1 \ra \phi_2$ be an isogeny of Drinfeld $A$-modules over $F$ with everywhere stable reduction. Then we have:
$$\sth(\phi_2)-\sth(\phi_1)=\frac{1}{r}\log|\deg(f)|-\frac{1}{[F:k]}\log\#(R/D_f),$$where $D_f$ is the different of $f$ (cf. \cite[Section 5.4]{tag1993} or \cite[Section 1.3]{ray1985}).
\end{lem}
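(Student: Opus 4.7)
The idea is to split $\sth(\phi_2) - \sth(\phi_1)$ into an algebraic contribution over $\spec(R)$ encoded by the different $D_f$ and an analytic contribution at the infinite places of $F$ encoded by the covolumes of the associated $A$-lattices. Since the stable Taguchi height is invariant under finite extensions once stable reduction is achieved, I may enlarge $F$ so that both $\phi_1$ and $\phi_2$ have everywhere stable reduction on $R$, and work directly with their normalized minimal models $\textbf{E}_1,\textbf{E}_2$.

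By Proposition \ref{gmp}(2) and its standard extension to the stable-reduction setting in \cite{tag1993}, the isogeny $f$ extends uniquely to an isogeny $\tilde f\colon \textbf{E}_1 \to \textbf{E}_2$ over $R$. Pulling back differentials along the unit section produces an injection of invertible $R$-modules
$$\tilde f^{\,*}\colon \omega_{\textbf{E}_2/R}\hookrightarrow \omega_{\textbf{E}_1/R}$$
whose cokernel is an $R$-torsion module annihilated precisely by $D_f$, hence of length $\log\#(R/D_f)$. Fixing a nonzero section $l_2\in\omega_{\textbf{E}_2/R}$ and setting $l_1:=\tilde f^{\,*}l_2$, this yields
$$\log\#\bigl(\omega_{\textbf{E}_1/R}/l_1R\bigr)-\log\#\bigl(\omega_{\textbf{E}_2/R}/l_2R\bigr)=\log\#(R/D_f).$$

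For the analytic part, at each infinite place $w$ of $F$ the analytic uniformization attaches to $\phi_{i,w}$ an $A$-lattice $\Lambda_{i,w}\subset\Cinf$, and $f$ is realized by multiplication by some $c_w\in\Cinf^{\times}$ with $c_w\Lambda_{1,w}\subseteq\Lambda_{2,w}$ of index $\deg(f)$. Combining $\tilde f^{\,*}dx_2 = c_w\,dx_1$ on the tangent with the covolume identities $D_A(c_w\Lambda_{1,w})=|c_w|_\infty D_A(\Lambda_{1,w})$ and $D_A(c_w\Lambda_{1,w})^r=\deg(f)\cdot D_A(\Lambda_{2,w})^r$, the factor $|c_w|_\infty$ cancels between the derivative of $\tilde f$ and the covolume rescaling, giving
$$\log\|l_1\|_{w,1}-\log\|l_2\|_{w,2}=\frac{1}{r}\log|\deg(f)|.$$
Summing over $w\mid\infty$ with local degrees $\epsilon_w$ and using $\sum_{w\mid\infty}\epsilon_w=[F:k]$, the archimedean total is $\frac{[F:k]}{r}\log|\deg(f)|$.

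Substituting both contributions into $\deg(\Lcal,\|\cdot\|)=\log\#(\Lcal/lR)-\sum_{w\mid\infty}\epsilon_w\log\|l\|_w$ for each $\phi_i$ and dividing by $[F:k]$ produces the claimed identity. The main obstacle is careful bookkeeping: I must track the factor $1/r$ coming from the definition of $D_A$ consistently, reconcile the two normalizations of $|\cdot|_\infty$ on $F_w$ flagged in the paragraph immediately before the definition of the Taguchi height, and verify that the local index $\#(\Lambda_{2,w}/c_w\Lambda_{1,w})$ does equal $\deg(f)$ at every infinite place. Everything beyond this is formal manipulation.
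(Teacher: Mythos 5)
The paper does not prove this lemma: it is asserted as ``standard'' with pointers to Taguchi \cite[Section 5.4]{tag1993} and Raynaud \cite[Section 1.3]{ray1985}, so there is no in-paper argument to compare against. Your proposal reconstructs the argument those references supply and is correct in substance: you decompose $\deg(\omega_{\textbf{E}_i/R},\|\cdot\|)$ into its finite and archimedean pieces and choose $l_1:=\tilde f^{\,*}l_2$ so the two sides compare term by term. On the finite side, the cokernel of the injection $\tilde f^{\,*}\colon\omega_{\textbf{E}_2/R}\hookrightarrow\omega_{\textbf{E}_1/R}$ of invertible $R$-modules over the Dedekind domain $R$ is not merely ``annihilated by $D_f$'' but is isomorphic to $R/D_f$ (locally at each prime it is $R_v/\mathfrak{m}_v^{e_v}$ and the ideal so determined is the different by definition), which is what actually gives length $\log\#(R/D_f)$; your phrasing should be tightened, but the conclusion stands and matches the local computation $v(D_f)=v(f_0)+v(\Lcal_1)-v(\Lcal_2)$ from Lemma \ref{dif}. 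On the archimedean side, the scalar $c_w$, which is the image of $f_0$ under $w$, enters once through $\tilde f^{\,*}dx_2=c_w\,dx_1$ and once with opposite sign through $D_A(c_w\Lambda_{1,w})=|c_w|\,D_A(\Lambda_{1,w})$, and the lattice index $[\Lambda_{2,w}:c_w\Lambda_{1,w}]=\#\ker f=\deg(f)$ (via the analytic uniformization of $\ker f$) converts the covolume ratio into $\tfrac{1}{r}\log|\deg(f)|$, so the $|c_w|$ cancellation you flagged does occur. One caution on provenance: Proposition \ref{gmp}(2) is stated under everywhere good reduction, whereas the lemma only assumes stable reduction; in the general case you should invoke Taguchi's extension of $f$ to the minimal models (\cite[Proposition 2.5]{tag1993}) and his definition of the different directly, since $\tilde f$ need not be finite there, though the induced injection on $\omega$'s, which is all your argument uses, still exists.
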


\begin{rem}\label{adr}
Let $G$ be the kernel of the induced homomorphism of the minimal models of $\phi_1$ and $\phi_2$. If $f:\phi_1 \ra \phi_2$ is an isogeny of Drinfeld $A$-modules over $F$ with everywhere good reduction, then by Proposition \ref{gmp} we see $f$ induces an isogeny between the minimal models. In this case, according to \cite[Equation 4.9.6]{ill1985}, $D_f$ is the absolute different of $G$ \cite[Apendice, D\'efinition 8]{ray1974}.
\end{rem}

If $\phi$ is a Drinfeld $A$-module of rank $r$ over $F$, we set:
$$v(\phi):=-\min_{a\in A\backslash\{0\}}\min_i\left\{\frac{v(a_i)}{q^i-1}:1\leq i\leq r\deg(a)\right\},$$
where the $a_i$'s are coefficients in $\phi_a=a\tau^0+\sum_{i=1}^{r\deg(a)}a_i\tau^i.$

\begin{lem}\label{dif}
	Let $f:\phi_1\ra \phi_2$ be an isogeny of Drinfeld $A$-modules over $F$ with everywhere good reduction on $R$. If $v\in \spec(R)$ is a finite place, then 
	$$v(D_f)=v(f_0)+v(\phi_1)-v(\phi_2),$$where $f_0=\partial (f)$ is the coefficient of the linear term.
\end{lem}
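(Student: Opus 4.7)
The plan is to localize at $v$, trivialize the line bundles of the minimal models, carry out the computation of the different in these trivialized coordinates, and then translate back.

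First I would localize at $v$. Since both $v(D_f)$ and the right-hand side depend only on $v$-adic data, it suffices to prove the identity over $R_v$. Let $(\G_{a,\Lcal_i},\varphi_i)$ denote the normalized minimal model of $\phi_i$ over $R$. Since both $\phi_1$ and $\phi_2$ have good reduction, Proposition \ref{gmp} ensures that these minimal models are genuine Drinfeld modules over $R$ and that $f$ extends to an isogeny between them. Over $R_v$ each $\Lcal_i$ is free of rank one, so I fix local generators $u_i$ with $v(u_i)=v(\Lcal_i)$.

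Next I would analyze how the formula transforms under these trivializations. The argument in the proof of Proposition \ref{gmp} shows that at a place of good reduction the inequality $v(a_j)\geq (1-q^j)v(\Lcal_i)$ holds for every coefficient, with equality attained at the leading coefficient $\Delta_a$; this forces $v(\phi_i)=v(\Lcal_i)$. After trivializing the minimal models via the $u_i$, the isogeny $f$ becomes a polynomial $\tilde f(x)=\tilde f_0x+\tilde f_1x^q+\cdots+\tilde f_nx^{q^n}$ with coefficients $\tilde f_j\in R_v$, related to the original by $\tilde f_0=f_0\cdot u_1u_2^{-1}$, so that
$$v(\tilde f_0)=v(f_0)+v(\Lcal_1)-v(\Lcal_2)=v(f_0)+v(\phi_1)-v(\phi_2).$$
Hence the claimed identity reduces to the trivialized statement $v(D_f)=v(\tilde f_0)$.

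The remaining step is a direct local computation. The kernel $G:=\ker f$ is the finite flat commutative $R_v$-group scheme
$$G=\spec\bigl(R_v[x]/(\tilde f_0x+\tilde f_1x^q+\cdots+\tilde f_nx^{q^n})\bigr).$$
By Remark \ref{adr}, $D_f$ is Raynaud's absolute different of $G$. For the conormal sheaf at the unit section, let $I=(x)$ be the augmentation ideal; since $x^{q^j}\in I^2$ for every $j\geq 1$, the defining equation of $G$ forces $\tilde f_0 x\equiv 0$ in $I/I^2$, and hence $\omega_G=I/I^2=R_v/(\tilde f_0)$ as an $R_v$-module. This module has length $v(\tilde f_0)$, which equals $v(D_f)$.

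The main obstacle is essentially one of bookkeeping: verifying that Raynaud's absolute different from \cite[Appendice, D\'efinition 8]{ray1974}, as identified via \cite[(4.9.6)]{ill1985} in Remark \ref{adr}, is indeed measured by the length of the conormal sheaf $\omega_G$ of the kernel. Once this compatibility is granted, the rest of the argument is an elementary local computation.
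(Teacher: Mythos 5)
Your proposal is essentially the same as the paper's proof: localize at $v$, trivialize the line bundles of the normalized minimal models, compute the absolute different of $\ker f$ in trivialized coordinates, and translate back using $v(\phi_i)=v(\Lcal_i)$ (which holds at places of good reduction). The only point of divergence is in the final step: the paper cites \cite[(4.9.5)--(4.9.6)]{ill1985} to read off directly that the absolute different of $\spec\bigl(R[X]/(\tilde f_0X+\cdots+\tilde f_nX^{q^n})\bigr)$ is $(\tilde f_0)$, whereas you recompute this by observing that $\omega_G=I/I^2\cong R_v/(\tilde f_0)$ and appealing to the compatibility between Raynaud's absolute different and the Fitting ideal (or length) of $\omega_G$. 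That compatibility is exactly the content of the Illusie citation, so the two arguments are equivalent in substance; you have simply made the local computation of $\omega_G$ explicit where the paper invokes the reference.
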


\begin{proof}
Let $\mathscr{M}_1$ and $\mathscr{M}_2$ be the normalized minimal models over $R$ of $\phi_1$ and $\phi_2$ respectively. It will suffice to prove for the local cases, i.e. we may assume that $R$ is a discrete valuation ring. Suppose $v$ is the valuation on $R$. Let us first look at the case when the normalized minimal models $\mathscr{M}_1=(\spec(R[X]),\varphi_1,\text{Id})$ and $\mathscr{M}_2=(\spec (R[Y]),\varphi_2, \text{Id})$. We use $f$ to denote the isogeny between $\spec (R[X])$ and $\spec (R[Y])$. We denote $f^{\#}:R[Y]\ra R[X]$ the corresponding homomorphism of rings. Thus we have
$$f^{\#}(Y)=f_0X+f_1X^q+\cdots +f_nX^{q^n} \in R[X].$$
The kernel $G$ of $f$ is then given by $\spec (R[X]/(f^{\#}(Y)))$. By \cite[Equations 4.9.5, 4.9.6]{ill1985}, the absolute different of $G$ is $(f_0)\subset R$. Thus $D_f=(f_0)$ (Remark \ref{adr}). By the construction of minimal models \cite[Proposition 2.2]{tag1993}, we note that in this case $v(\phi_1)=v(\phi_2)=0$. Hence our claim is true in this case.\\

To prove the general cases, let $\G_{a,\Lcal_i}$ be the line bundle of the normalized minimal model of $\phi_i$ for $i=1,2$. We assume $\Lcal_i=(a_i)$ to be a fractional ideal for some $a_i\in F$. Thus we have 
$$f_j\in (a_2a_1^{-q^j}).$$
In particular, $f_0=a_2a_1^{-1}b$ for some $b\in R$. Now apply the same argument above with the variables $X$ replaced by $a_1^{-1}X$ and $Y$ replaced by $a_2^{-1}Y$, we see $D_f=(b)=(f_0a_1a_2^{-1})$. Therefore we have $$v(D_f)=v(f_0)+v(a_1)-v(a_2).$$This proves our claim.
\end{proof}

\noindent {\large \textbf{Logarithmic heights}} \medskip

The absolute value $|\cdot|_{\infty}$ on $k$ naturally extends to a unique absolute value on $\C_{\infty}$, which restricts to the same values on $k_{\infty}$ and we denote it by $|\cdot|$. If $F/k$ is a finite field extension and $w$ is a place of $F$ lying over $v\in M_k$, we normalize the absolute value associated to $w$ as 
$$|y|_w=|\text{N}_{F_w/k_v}(y)|_v^{\frac{1}{[F:k]}},\ \forall y\in F.$$
Since $k$ has degree of imperfection 1 (see the Remark \ref{impf} below), for any place $v\in M_k$ we have: 
\begin{equation}\label{pdf}
F\otimes_kk_v\cong \prod_{w|v}F_w.
\end{equation}
Let $M_F$ be the set of places $w$ normalized as above. By $(\ref{pdf})$ we have following two properties:
\begin{itemize}
\item Product formula: For every $y\in F$, $\sum_{w\in M_F}\log|y|_w=0.$
\item Extension formula: $[F:k]=\sum_{w|v}[F_w:k_v].$
\end{itemize}

\begin{rem}\label{impf}
If $F$ is a field of characteristic $p\neq 0$, by degree of imperfection of $F$ we mean the number $n$ such that $[F:F^p]=p^n$. The isomorphism in $(2)$ holds in a more general case of simple extension. A theorem from Becker and MacLane \cite[Theorem 6]{bm1940} tells us any finite extension $L/F$ can be generated by at most $\max\{1,n\}$ elements.
\end{rem}

Let $\overline{k}$ be the algebraic closure of $k$ in $\Cinf$ and we denote by $\Pro^n(\overline{k})$ the $n$-dimensional projective space over $\kcl$. If $\textbf{x}=(x_0:\cdots:x_n)\in \Pro^n(\kcl)$ and $F$ is a  finite extension of $k$ containing these coordinates, then the \textit{Weil height} of $\textbf{x}$ is: $$h(\textbf{x}):=\sum_{w\in M_F}\max_j \log |x_j|_w.$$As in the number field case, this definition is independent of the choice of both the field $F$ and the coordinates. The definition of Weil heights of points in affine space is naturally obtained by embedding the affine space to a projective space. In particular, for $x\in \kcl$ and $F$ a finite extension of $k$ containing $x$ we have 
$$h(x)=\sum_{w\in M_F}\log \max\{1,|x|_w\}=\sum_{w\in M_F}\log^+|x|_w.$$
The proof of following lemma is almost the same as that of \cite[Proposition 1.6.6]{bg2006}. We give a proof here for the convenience of the reader.

\begin{lem}\label{htl}
Suppose $A=\fq[t]$. Let $\alpha \in k^{\sep}$ of degree $d$ and $f(X)$ be the minimal polynomial of $\alpha$ over $A$ with leading coefficient $a_d$ and roots $\alpha_j$, $j=1,...,d$. Then $$dh(\alpha)=\log|a_d|+\sum_{j=1}^d\log^+|\alpha_j|.$$
\end{lem}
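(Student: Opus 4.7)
The plan is to split $h(\alpha)=\sum_{w\in M_F}\log^+|\alpha|_w$, with $F=k(\alpha)$ of degree $d$, into contributions from the infinite places of $F$ and the finite places, and to show that the infinite places supply the $\sum_j\log^+|\alpha_j|$ term while the finite places supply $\log|a_d|$. The product formula applied to $a_d\in k$ will be the bridge between the two.

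For the infinite piece, I would exploit the decomposition $F\otimes_k k_\infty\cong\prod_{w\mid\infty}F_w$ from (\ref{pdf}). Since $\alpha$ is separable, the $d$ embeddings $F\hookrightarrow\bar{k}_\infty$ send $\alpha$ to its Galois conjugates $\alpha_1,\ldots,\alpha_d$, and these embeddings factor through the various $F_w$ with exactly $[F_w:k_\infty]$ embeddings per place $w$. Combined with the normalization $|\alpha|_w=|N_{F_w/k_\infty}(\alpha)|_\infty^{1/[F:k]}$ and the identity $\log^+(x^c)=c\log^+x$ for $c>0$, this yields
$$d\sum_{w\mid\infty}\log^+|\alpha|_w=\sum_{j=1}^d\log^+|\alpha_j|.$$
Exactly the same embedding bookkeeping works at any finite place $v$, giving $d\sum_{w\mid v}\log^+|\alpha|_w=\sum_{j=1}^d\log^+|\alpha_j|_v$ after extending $|\cdot|_v$ to $\bar{k}_v$.

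The technical heart is the non-Archimedean Mahler-measure identity at each finite place. Because $f$ is taken as the minimal polynomial over $A$, one may normalize so that the coefficients $a_0,\ldots,a_d\in A$ share no common irreducible factor; hence $\max_i|a_i|_v=1$ for every finite $v$. A Newton polygon / Vieta argument applied to the factorization $f(X)=a_d\prod_{j=1}^d(X-\alpha_j)$ over $\bar{k}_v$ then gives
$$\max_i|a_i|_v=|a_d|_v\prod_{j=1}^d\max(1,|\alpha_j|_v),$$
so $\sum_j\log^+|\alpha_j|_v=-\log|a_d|_v$ at each finite $v$. Summing over all finite $v$ and applying the product formula $\sum_{v\in M_k}\log|a_d|_v=0$ converts $-\sum_{v\ne\infty}\log|a_d|_v$ into $\log|a_d|_\infty=\log|a_d|$; adding this to the infinite contribution yields $dh(\alpha)=\log|a_d|+\sum_{j=1}^d\log^+|\alpha_j|$. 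The only non-routine ingredient is the Mahler-measure identity, and it is precisely there that the primitivity of $f$ (being the minimal polynomial over $A$ rather than over $k$) is essential.
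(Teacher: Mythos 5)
Your proposal is correct and follows essentially the same route as the paper: both reduce the finite-place contribution to $\log|a_d|$ via Gauss's lemma (your ``Mahler-measure identity'' $\max_i|a_i|_v=|a_d|_v\prod_j\max(1,|\alpha_j|_v)$, together with primitivity of $f$ over $A$ so that $\max_i|a_i|_v=1$ at every finite $v$) and then convert $-\sum_{v\ne\infty}\log|a_d|_v$ into $\log|a_d|$ by the product formula, while the infinite places account for $\sum_j\log^+|\alpha_j|$. The only cosmetic difference is that you work directly in $F=k(\alpha)$ using the decomposition $F\otimes_k k_v\cong\prod_{w\mid v}F_w$, whereas the paper passes to a Galois closure and bookkeeps with $\sum_{\sigma\in G}$ (using Galois invariance of the height and a factor $[F:k]/d$); neither choice changes the key lemma or the overall structure of the argument.
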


\begin{proof}
	Let $F/k$ be a finite Galois extension that contains $\alpha$. Let $G:=\gal(F/k)$. The set $\{\sigma(\alpha)\}_{\sigma\in G}$ contains every conjugate of $\alpha$ exactly $[F:k]/d$ times. We have 
	$$a_d\prod_{\sigma\in G}(X-\sigma(\alpha))^{\frac{d}{[F:k]}}=f(X).$$
	Apply Gauss's lemma \cite[Lemma 1.6.3]{bg2006}, we get for any finite place $w\in M_F$
	\begin{equation}\label{gl}
		|a_d|_w\prod_{\sigma\in G}\max\{1,|\sigma(\alpha)|_w\}^{\frac{d}{[F:k]}}=1.
	\end{equation}
	Thus, we have 
	\begin{align*}
		[F:k]h(\alpha) & = \sum_{w\in M_F}\sum_{\sigma\in G}\log^+|\sigma(\alpha)|_w,\\
		& = \sum_{w|\infty}\sum_{\sigma\in G}\log^+|\sigma(\alpha)|_w+\sum_{w\nmid \infty}\sum_{\sigma\in G}\log^+|\sigma(\alpha)|_w\\
		& = \sum_{\sigma\in G}\sum_{w|\infty}\log^+|\sigma(\alpha)|_w+\sum_{w\nmid \infty}\sum_{\sigma\in G}\log^+|\sigma(\alpha)|_w\\
		& = \frac{[F:k]}{d}\sum_{w| \infty}\sum_{j=1}^d\log^+|\alpha_j|_w-\frac{[F:k]}{d}\sum_{w\nmid \infty}\log |a_d|_w,\ \text{ by (\ref{gl})}\\
		& = \frac{[F:k]}{d}\sum_{w|\infty}\sum_{j=1}^d\log^+|\alpha_j|_w+\frac{[F:k]}{d}\sum_{w|\infty}\log |a_d|_w,\ \text{\ by product formula}\\
		& = \frac{[F:k]}{d}\left( \log|a_d|+\sum_{w|\infty} \sum_{j=1}^d \frac{[F_w:k_{\infty}]}{[F:k]}\log^+|\sigma_w(\alpha_j)|\right),\\
		& = \frac{[F:k]}{d}\left(\log|a_d|+\sum_{w|\infty}\frac{[F_w:k_{\infty}]}{[F:k]}\sum_{j=1}^d\log^+|\alpha_j|\right)\\
		& = \frac{[F:k]}{d}\left(\log |a_d|+\sum_{j=1}^n\log^+|\alpha_j|\right).
	\end{align*}
	Now the lemma follows.
\end{proof}

\begin{defn}
	Let $\phi$ be a Drinfeld $A$-module of rank $r$ over $F$. The \textit{global graded degree} $h_G(\phi)$ (resp. \textit{local graded degree $h_G^w(\phi/F)$ at a place $w$ over $F$}) of $\phi$ is 
	$$h_G(\phi):=\frac{1}{[F:k]}\sum_{w\in M_F}\deg(w)w(\phi)\ \left(\text{resp}.\ h_G^w(\phi/F):=\frac{\deg(w)w(\phi)}{[F:k]}\right).$$
\end{defn}

\begin{rem}\label{rkgh}
\begin{enumerate}[(1)]
	\item It is obvious the global graded height of $\phi$ does not depend on the choice of the field $F$ and it is invariant under isomorphisms, while a local graded height of $\phi$ will not satisfy such properties.
	\item The global graded height is a direct interpretation of ``finite'' Taguchi height (cf. \cite[Definition 2.3]{tag1991}).
	\item Let $\phi$ be a Drinfeld $\fq [t]$-module of rank $r$ over $F$. Then it is characterised by:
	$$\phi_t=t\tau^0+g_1\tau+\cdots +g_r\tau^r,\ g_i\in F, g_r\neq 0.$$
	Let $m=\text{lcm}\{q-1,...,q^r-1\}$. We set $J:=(j_1:\cdots :j_r)\in \Pro^{r-1}(\kcl)$ where 
	$$j_i=g_i^{m/(q^i-1)},\ \text{for}\ i=1,...,r.$$
	If $r=2$, then $j_{\phi}:=j_1/j_2$ is the $j$-invariant of the Drinfeld $\fq[t]$-module $\phi$. It plays the same role as the $j$-invariant of elliptic curves. The global graded height of $\phi$ is then given by:
	$$h_G(\phi)=\sum_{w\in M_F}\max_{1\leq i\leq r}\log|g_i|_w^{1/(q^i-1)}.$$
	Thus the global graded height coincides the one in \cite[Equation 6]{bpr21}. It is obvious that $mh_G(\phi)=h(J)$.
\end{enumerate}
\end{rem}

\begin{prop}\label{ghga}
Let $\phi$ be a Drinfeld $A$-module of rank $r$ over $F$ such that $F/k$ is a separable extension. For any $\sigma\in \gal(k^{\sep}/k)$, we denote by $\sigma(\phi)$ the Drinfeld $A$-module obtained by acting $\sigma$ on the coefficients of a Drinfeld $A$-module $\phi$. Then we have $$h_G(\phi)=h_G(\sigma(\phi)).$$
\end{prop}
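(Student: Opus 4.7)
The identity should follow purely from functoriality: the automorphism $\sigma$ induces an isomorphism of fields $\sigma : F \xrightarrow{\sim} \sigma(F)$ over $k$, and hence a bijection of places $M_F \xrightarrow{\sim} M_{\sigma(F)}$ sending $v$ to $\sigma(v) := v \circ \sigma^{-1}$. Under this bijection degrees are preserved, $\deg(\sigma(v)) = \deg(v)$, and the normalized absolute values are matched, $|\sigma(x)|_{\sigma(v)} = |x|_v$ for every $x \in F$. Moreover $[\sigma(F):k] = [F:k]$. Since by Remark \ref{rkgh}(1) the global graded height is independent of the chosen base field, we may evaluate $h_G(\sigma(\phi))$ using $\sigma(F)$, over which $\sigma(\phi)$ is naturally defined.

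The one substantive point is to see that the local graded quantity $v(\phi)$ is equivariant under $\sigma$. Since $\sigma$ fixes $A \subset k$, if $\phi_a = a\tau^0 + \sum_{i \ge 1} a_i \tau^i$ then $(\sigma(\phi))_a = a\tau^0 + \sum_{i \ge 1} \sigma(a_i)\tau^i$. Thus for any $v \in M_F$,
$$
\sigma(v)\bigl(\sigma(\phi)\bigr) = -\min_{0 \ne a \in A}\min_{1 \le i \le r\deg(a)} \frac{\sigma(v)(\sigma(a_i))}{q^i - 1} = -\min_{a}\min_i \frac{v(a_i)}{q^i - 1} = v(\phi).
$$
Summing and using the change-of-variable $v \leftrightarrow \sigma(v)$ together with preservation of degrees and field index gives
$$
h_G(\sigma(\phi)) = \frac{1}{[\sigma(F):k]} \sum_{w \in M_{\sigma(F)}} \deg(w)\, w(\sigma(\phi)) = \frac{1}{[F:k]} \sum_{v \in M_F} \deg(v)\, v(\phi) = h_G(\phi).
$$

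The main obstacle is conceptual rather than computational: one must be careful that $\sigma(F)$ need not coincide with $F$, so $\phi$ and $\sigma(\phi)$ \emph{a priori} live over different subfields of $k^{\sep}$. The cleanest way to handle this, exploiting the separability hypothesis, is to choose a finite Galois extension $F'/k$ inside $k^{\sep}$ containing $F$, extend $\sigma$ to an element of $\gal(F'/k)$, and view both $\phi$ and $\sigma(\phi)$ as defined over $F'$; the bijection of places then becomes a permutation of $M_{F'}$ and the above manipulation is immediate. The base-field independence asserted in Remark \ref{rkgh}(1) (which should be invoked or quickly reproved) ensures no loss in passing from $F$ to $F'$.
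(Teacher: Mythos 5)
Your proposal is correct and takes essentially the same approach as the paper: pass to a Galois extension so that $\sigma$ acts by permuting places, note that degrees are preserved, and rearrange the defining sum. The only difference is stylistic: you spell out the equivariance $\sigma(v)(\sigma(\phi))=v(\phi)$ explicitly (the paper leaves it implicit in the expression $w\circ\sigma(\phi)$), and you phrase the reduction to the Galois case by enlarging $F$ to $F'$ and extending $\sigma$, whereas the paper invokes Remark \ref{rkgh}(1) to simply assume $F/k$ Galois from the start; these are two formulations of the same step.
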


\begin{proof}
By Remark \ref{rkgh} (1) we may assume $F/k$ is a Galois extension so that for any $\sigma\in \gal(F/k)$ the Drinfeld $A$-module $\sigma(\phi)$ is defined over $F$. For any places $v\in M_k$ and $w\in M_F$ such that $w$ lies over $v$, we see $w\circ \sigma$ is again a place lying over $v$. Thus $\sigma$ permutes the places lying over $v$. By a result of algebraic number theory, $w$ and $w\circ \sigma$ have the same degree. Therefore
$$h_G(\sigma(\phi))=\sum_{v\in M_k}\sum\limits_{\substack{w\in M_F\\ w|v}} \deg(w\circ \sigma)w\circ\sigma(\phi)=\sum_{v\in M_k}\sum\limits_{\substack{w\in M_F\\ w|v}} \deg(w)w(\phi)=h_G(\phi).$$
\end{proof}

\begin{thm}\label{vth}
	Let $f:\phi_1\ra \phi_2$ be an isogeny of Drinfeld $A$-modules over $F$ with everywhere good reduction on $\spec(R)$. Then we have:
	$$\sth(\phi_2)-\sth(\phi_1)=\frac{1}{r}\log|\deg(f)|-\sum\limits_{\substack{w|\infty\\ w\in M_F}}\log|f_0|_w+h_G^{\f}(\phi_2)-h_G^{\f}(\phi_1),$$ where $f_0$ is the linear coefficient of $f$ and for $i=1,2$, $h_G^{\f}(\phi_i)$ is the sum of the local graded heights running over all finite places of $F$.
\end{thm}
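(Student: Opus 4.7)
The plan is to combine Lemma \ref{isogl} with Lemma \ref{dif} via the product formula. Since $\phi_1$ has everywhere good reduction on $R$, it has everywhere stable reduction over $F$, so $\sth(\phi_i)=h_\ta(\phi_i/F)$ and we may apply Lemma \ref{isogl} directly. By Proposition \ref{gmp}, $f$ extends to an isogeny of the normalized minimal models, so $D_f$ is the absolute different of the kernel (Remark \ref{adr}) and Lemma \ref{dif} applies at every finite place of $R$.

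Starting from Lemma \ref{isogl}, it suffices to prove that
\begin{equation}\label{goal}
\frac{1}{[F:k]}\log\#(R/D_f) = \sum_{w\mid\infty}\log|f_0|_w + h_G^{\f}(\phi_1) - h_G^{\f}(\phi_2).
\end{equation}
First I would rewrite the left-hand side as a sum over finite places. Decomposing $D_f=\prod_{\mathfrak{p}}\mathfrak{p}^{w_{\mathfrak{p}}(D_f)}$ and using that the residue field at $\mathfrak{p}$ has cardinality $q^{\deg(w)}$, where $w$ is the place corresponding to $\mathfrak{p}$, yields
$$\frac{1}{[F:k]}\log\#(R/D_f)=\frac{1}{[F:k]}\sum_{\substack{w\in M_F\\ w\nmid\infty}}\deg(w)\,w(D_f).$$

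Next I would apply Lemma \ref{dif} term by term to expand $w(D_f)=w(f_0)+w(\phi_1)-w(\phi_2)$ for each finite $w$, and split the sum into three pieces. The two pieces involving $w(\phi_i)$ are, by definition, exactly $h_G^{\f}(\phi_i)$. For the $w(f_0)$ piece, I rewrite $\frac{\deg(w)w(f_0)}{[F:k]}=-\log|f_0|_w$ using the normalization $|y|_w=q^{-\deg(w)w(y)/[F:k]}$ that comes from the place-wise conventions fixed in Section 3. Then the product formula applied to $f_0\in F^{\times}$ gives
$$-\sum_{\substack{w\in M_F\\ w\nmid\infty}}\log|f_0|_w=\sum_{\substack{w\in M_F\\ w\mid\infty}}\log|f_0|_w,$$
which establishes (\ref{goal}) and hence the theorem.

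There is no real obstacle here; the argument is essentially a bookkeeping calculation combining the two preceding lemmas with the product formula, and the only point where one must be careful is the compatibility between the normalization of the finite absolute values $|\cdot|_w$ used to define the Weil/graded heights and the counting normalization used to define $\#(R/D_f)$ — but these match exactly through the identity $\deg(w)=\deg(v)\cdot f(w\mid v)$.
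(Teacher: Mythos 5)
Your proposal is correct and follows essentially the same route as the paper: apply Lemma~\ref{isogl}, then expand $\frac{1}{[F:k]}\log\#(R/D_f)$ as a degree-weighted sum over finite places, substitute Lemma~\ref{dif} termwise to split off the $h_G^{\f}(\phi_i)$ contributions, and convert the $w(f_0)$ sum to a sum over infinite places via the product formula and the normalization $\log|f_0|_w=-\deg(w)w(f_0)/[F:k]$. The only difference is presentational — you spell out the identification $\log\#(R/D_f)=\sum_{w\nmid\infty}\deg(w)w(D_f)$ and the compatibility $\deg(w)=\deg(v)f(w\mid v)$ explicitly, which the paper leaves implicit but which are exactly the right points to make.
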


\begin{proof}
By applying Lemma \ref{isogl} and Lemma \ref{dif} we obtain:
\begin{equation}\label{ist}
\sth(\phi_2)-\sth(\phi_1) = \frac{1}{r}\log|\deg(f)|- \frac{1}{[F:k]}\sum\limits_{\substack{w\in M_F \\ w \nmid \infty}} \deg(w)(w(f_0)+w(\phi_1)-w(\phi_2)).
\end{equation}
By applying the product formula, we get:
\begin{equation}\label{pr}
\sum\limits_{\substack{w\in M_F \\ w \nmid \infty}} \deg(w)w(f_0) = - \sum\limits_{\substack{w\in M_F \\ w | \infty}} \deg(w)w(f_0).
\end{equation}
Under our normalization, we have 
\begin{equation}\label{nomab}
\log|f_0|_w=\frac{-\deg(w)w(f_0)}{[F:k]}.
\end{equation}
Now substitute (\ref{nomab}) and (\ref{pr}) to (\ref{ist}) we obtain our formula.
\end{proof}

\noindent {\large \textbf{Heights of Drinfeld $A$-modules with complex multiplication}} \medskip

We first recall the CM theory for Drinfeld modules. Let $\phi$ be a Drinfeld $A$-module of rank $r$ over $\Cinf$. We say $\phi$ has \textit{complex multiplication} if the ring of endomorphisms $\Ocal:=\End(\phi)$ is a projective $A$-module of rank $r$, and $K:=\Ocal\otimes_A k$ is called the \textit{CM field of $\phi$}. In this case, $K/k$ is an imaginary extension of degree $r$. Here by an \textit{imaginary extension} we mean there is only one place of $K$ extending the infinite place $\infty$ of $k$. As with the case of abelian varieties, there is also a standard theory of complex multiplication for Drinfeld modules:

\begin{thm}(Main Theorem of Complex Multiplication) Let $\phi$ be a Drinfeld $A$-module of rank $r$ over $\Cinf$ with complex multiplication. Let $\Ocal$ be the ring of endomorphisms of $\phi$ and $K$ be the CM field. The following statements are true:
	\begin{enumerate}
\item There is a finite extension $H_{\Ocal}/K$ such that $\gal(H_{\Ocal}/K)\cong \pic(\Ocal)$ via the Artin map. The field $H_{\Ocal}$ is the ring class field of $\Ocal$. The prime $\infty$ of $K$ splits completely in $H_{\Ocal}$, and $\Ocal$ is unramified outside $\mathcal{C}$ which is the conductor of $\Ocal$, i.e. the largest common ideal of $\Ocal$ and $\Ocal_K$.
\item $\phi$ has good reduction at every finite place of $H_{\Ocal}$.
\item If $r=2$ and $A=\fq[t]$, then the $j$-invariant $j_{\phi}$ of $\phi$ is integral over $A$ and $H_{\Ocal}=K(j_{\phi})$.
	\end{enumerate}
\end{thm}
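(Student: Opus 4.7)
The plan is to develop the three parts in sequence, closely mirroring the proof of the classical Main Theorem of Complex Multiplication for elliptic curves but using the analytic theory of Drinfeld modules and the explicit class field theory of function fields due to Hayes. I would start by invoking the equivalence between rank-$r$ Drinfeld $A$-modules over $\Cinf$ and rank-$r$ $A$-lattices in $\Cinf$. Under this equivalence, the condition $\End(\phi)=\Ocal$ translates into the lattice $\Lambda$ being a rank-$1$ projective $\Ocal$-module, i.e.\ a class in $\pic(\Ocal)$, and conversely every invertible $\Ocal$-ideal yields such a Drinfeld module. Hence the set of isomorphism classes of Drinfeld $A$-modules with CM by $\Ocal$ is a torsor under $\pic(\Ocal)$, with the natural action $[\mathfrak{a}]\cdot[\Lambda]=[\mathfrak{a}^{-1}\Lambda]$.

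For statement (1), I would then verify that this $\pic(\Ocal)$-action coincides with the Galois action via the Artin map. Concretely, I would fix a prime $\mathfrak{p}$ of $\Ocal_K$ coprime to $\mathcal{C}$, compare $\phi^{\mathrm{Frob}_{\mathfrak{p}}}$ to $\mathfrak{p}\star\phi$ using the induced action on $\mathfrak{p}$-power torsion, and apply Hayes's reciprocity law (which is classically stated for the maximal order but adapts to $\Ocal$ after inverting $\mathcal{C}$). Complete splitting of $\infty$ in $H_{\Ocal}$ follows because $K$ is imaginary, so $\infty$ extends uniquely to $K$, combined with the fact that the analytic uniformization is already defined over $K_\infty$; unramifiedness of $H_{\Ocal}/K$ outside $\mathcal{C}$ is a consequence of a N\'eron-Ogg-Shafarevich criterion for Drinfeld modules applied to $\mathfrak{p}$-torsion for $\mathfrak{p}\nmid\mathcal{C}$, where the action is through an \'etale $\Ocal/\mathfrak{p}$-module.

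Statement (2) I would then obtain by the same N\'eron-Ogg-Shafarevich criterion applied at each finite place $\mathfrak{P}$ of $H_\Ocal$: inertia at $\mathfrak{P}$ in $\gal(K^{\mathrm{sep}}/H_\Ocal)$ acts trivially on a suitable torsion module by the CM character computation above, so $\phi$ has good reduction at $\mathfrak{P}$. An alternative, more constructive route is to pick a representative invertible ideal $\mathfrak{a}\subset\Ocal$ and verify via Proposition~\ref{gmp} and the minimal-model machinery of Section~2 that the leading coefficient $\Delta_a$ of $\phi_a$ is a unit at every finite place of $H_\Ocal$.

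For statement (3), integrality of $j_\phi$ over $A$ is immediate from (2), since good reduction at every finite place of $H_\Ocal$ forces $j_\phi$ to lie in the integral closure of $A$ in $H_\Ocal$. The equality $H_\Ocal=K(j_\phi)$ follows because $j_\phi$ is a complete isomorphism invariant of rank-$2$ Drinfeld $\fq[t]$-modules, so the simply transitive $\pic(\Ocal)$-action on isomorphism classes descends to a faithful action on the Galois conjugates of $j_\phi$, giving $[K(j_\phi):K]\geq |\pic(\Ocal)|=[H_\Ocal:K]$, while the reverse inclusion is immediate from (1). The main obstacle throughout is handling non-maximal orders $\Ocal$ with non-trivial conductor $\mathcal{C}$: the cleanest formulations of Hayes's theory treat only the maximal order, and so the required extension of the reciprocity law, and the careful tracking of ramification at primes dividing $\mathcal{C}$, is the technical heart of the argument.
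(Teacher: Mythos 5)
The paper does not prove this theorem; it is recorded as background and the reader is referred wholesale to Hayes \cite{dh1979} for a complete treatment, so there is no in-paper argument to compare your proposal against.

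Evaluating the proposal on its own terms: the overall skeleton (analytic uniformization, $\pic(\Ocal)$-torsor, Hayes reciprocity, N\'eron--Ogg--Shafarevich, integrality of $j_\phi$) is the right one, but the argument offered for (2) is circular as written. To invoke a N\'eron--Ogg--Shafarevich criterion you need inertia at a finite place of $H_\Ocal$ to act trivially on a torsion module of a fixed integral model; the ``CM character computation above'' you appeal to from part (1) only describes how $\gal(K^{\sep}/K)$ permutes isomorphism classes of CM Drinfeld modules, and by itself says nothing about inertia on the torsion of any particular model. This is not a formality one can wave through: the analogous assertion for elliptic curves (good reduction at every finite place of the Hilbert class field) is false, so the Drinfeld statement must rest on features peculiar to the positive-characteristic setting --- in Hayes's account, sgn-normalization and the rank-one reduction theory do the real work. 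Your ``alternative constructive route'' is also circular, since Proposition \ref{gmp} in the paper takes everywhere good reduction as a hypothesis and therefore cannot be used to establish it. Moreover, the construction of (1) as you outline it already leans on good reduction at the primes $\mathfrak{p}$ used in the reciprocity comparison, so (1) and (2) cannot be proved in strict sequence the way the sketch suggests. Finally, you correctly identify the passage from $\Ocal_K$ to a non-maximal $\Ocal$ as ``the technical heart'' of (1), but the proposal leaves that heart empty; in any genuine proof it is most of the work.
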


The reader could find a proof to the above statements in \cite{dh1979}, as well as a complete treatment of theory of complex multiplications for Drinfeld $A$-modules.\\

Suppose $\phi$ is a Drinfeld $A$-module of rank $r$ with CM by an order $\Ocal$ in a CM field $K$. Recall that a \textit{proper ideal} $I$ of $\Ocal$ is a fractional ideal of $\Ocal$ in $K$ such that
$$\{x\in K: x\cdot I\subset I\}=\Ocal.$$
 We denote by $\pr(\Ocal)$ the monoid of proper fractional ideals of $\Ocal$ quotient by principal ideals. It is then obvious $\pic(\Ocal)\subset \pr(\Ocal)$ and $\pic(\Ocal)$ has a natural action on $\pr(\Ocal)$. Since $\phi$ has CM by $\Ocal$, its associated lattice is isomorphic to a proper ideal $I_{\phi}$ of $\Ocal$. 

\begin{lem}\label{ivgh}
Let $\phi_1$ and $\phi_2$ be two Drinfeld $A$-modules of rank $r$ with CM by the same order $\Ocal$, and $I_1$ (resp. $I_2$) be a proper ideal of $\Ocal$ such that the associated lattice of $\phi_1$ (resp. $\phi_2$) is isomorphic to $I_1$ (resp. $I_2$). If $I_1$ and $I_2$ are in the same orbit of $\pr(\Ocal)$ under the action of $\pic(\Ocal)$, then $h_G(\phi_1)=h_G(\phi_2)$. In particular, if $\pr(\Ocal)=\pic(\Ocal)$ then all the Drinfeld $A$-modules with CM by $\Ocal$ have the same graded height.
\end{lem}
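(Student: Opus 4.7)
The plan is to reduce the lemma to the Galois-invariance of the global graded height (Proposition~\ref{ghga}) via the Main Theorem of Complex Multiplication recalled earlier. Since both $\phi_1$ and $\phi_2$ have CM by $\Ocal$, they can be defined up to isomorphism over the ring class field $H_{\Ocal}$, which (after passing to its Galois closure over $k$ if necessary) is a finite separable extension of $k$, so that Proposition~\ref{ghga} is available. Moreover, the Artin map furnishes an isomorphism $\gal(H_{\Ocal}/K) \cong \pic(\Ocal)$.

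The key input is the compatibility of the two actions supplied by CM theory: under the Artin isomorphism and the bijection between $H_{\Ocal}$-isomorphism classes of rank-$r$ Drinfeld $A$-modules with CM by $\Ocal$ and elements of $\pr(\Ocal)$, the Galois action of $\gal(H_{\Ocal}/K)$ on the former corresponds to the natural $\pic(\Ocal)$-action on the latter; this is part of the main theorem of CM for Drinfeld modules, which I would cite from \cite{dh1979}. Granted this, the hypothesis that $[I_1]$ and $[I_2]$ lie in the same $\pic(\Ocal)$-orbit translates at once into $\phi_2 \cong \sigma(\phi_1)$ for some $\sigma \in \gal(H_{\Ocal}/K)$, and combining Proposition~\ref{ghga} with the isomorphism-invariance recorded in Remark~\ref{rkgh}(1) yields
$$h_G(\phi_1) = h_G(\sigma(\phi_1)) = h_G(\phi_2).$$

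For the \textit{in particular} clause, if $\pr(\Ocal) = \pic(\Ocal)$ then $\pic(\Ocal)$ acts transitively on itself, so there is a single orbit and the previous paragraph forces $h_G$ to be constant across all Drinfeld $A$-modules with CM by $\Ocal$. The only step that demands genuine care is the compatibility between the $\pic(\Ocal)$-action on proper ideal classes and the Galois action on the corresponding Drinfeld modules; once that is accepted from \cite{dh1979}, the remainder of the argument is essentially a one-line application of Proposition~\ref{ghga} and requires no calculation.
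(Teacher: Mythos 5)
Your proposal is correct and takes essentially the same route as the paper: the hypothesis that $[I_1]$ and $[I_2]$ lie in the same $\pic(\Ocal)$-orbit is converted into an isomorphism $\phi_2 \cong \sigma(\phi_1)$ for some Galois element, and then Proposition~\ref{ghga} together with Remark~\ref{rkgh}(1) finishes the argument. The paper spells out the intermediate step more concretely, first realizing the orbit condition through the ideal action $J*\phi_2$ and then identifying $J*\phi_2$ with a Galois conjugate of $\phi_2$ via \cite[Theorem~A.1(2)]{wei20} (also noting that $J$ can be chosen integral), whereas you appeal directly to the CM compatibility from \cite{dh1979}; this is a difference of granularity and citation, not of method.
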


\begin{proof}
Without loss of generality, we assume $\phi_i$ has associated lattice $I_i$ where $i=1,2$. We choose an invertible ideal $J\in \pic(\Ocal)$ such that $I_1=J^{-1}\cdot I_2$. Thus $I_1$ is homothetic to the lattice associated to $J*\phi_2$ (cf. \cite[Proposition 5.10 and Equation (5.18)]{dh1979}). On the other hand, $J*\phi_2$ is isomorphic to a Drinfeld $A$-module $\phi_2'$ obtained by a Galois action on the coefficients of $\phi_2$ \cite[Theorem A.1 (2)]{wei20}. We note that we can always choose suitable $I_1$ and $I_2$ to make $J$ integral so that our argument makes sense. Now by Remark \ref{rkgh} (1) and Proposition \ref{ghga} we complete our proof.
\end{proof}

For the rest of this paper, we always fix $A=\fq[t]$. A proof of the following result for elliptic curves is due to Nakkajima and Taguchi \cite{nt1991}. We only make a few arguments here to adapt their formula to Drinfeld $A$-modules.

\begin{prop}\label{abdf}
Let $\phi_1$ and $\phi_2$ be two Drinfeld $A$-modules of rank 2 with CM by $\Ocal_K$ and $\Ocal$ respectively, where $K$ is an imaginary quadratic field and $\Ocal_K$ (resp. $\Ocal$) is a maximal (resp. arbitrary) order. We write $\Ocal=A+f_0\Ocal_K$ for some $f_0\in A$. If $F/K$ is a finite field extension such that both $\phi_1$ and $\phi_2$ are defined over $F$ with everywhere good reduction then
$$\sth(\phi_2)-\sth(\phi_1)=\frac{1}{2}\log |f_0|-\frac{1}{2}\sum_{v|f_0}\deg(v)e_{f_0}(v),$$
where $v$ runs over all monic prime factors of $f_0$ and for $l:=q^{\deg(v)}$
$$e_{f_0}(v)=\frac{(1-\chi(v))(1-l^{-v(f_0)})}{(l-\chi(v))(1-l^{-1})},$$
and $\chi(v)=1$ if $v$ splits in $K$; $\chi(v)=0$ if $v$ ramifies in $K$; $\chi(v)=-1$ if $v$ is inert in $K$.
\end{prop}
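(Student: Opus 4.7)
The plan is to adapt Nakkajima and Taguchi's method \cite{nt1991} to the Drinfeld-module setting, with Theorem \ref{vth} playing the role of their isogeny formula for Faltings heights. First I construct a natural isogeny $f:\phi_1\to\phi_2$ from the CM structure: by Lemma \ref{ivgh} I may choose lattice representatives so that the lattice attached to $\phi_1$ is $\Ocal_K$ and that attached to $\phi_2$ is $\Ocal=A+f_0\Ocal_K$. Since $f_0\Ocal_K\subseteq\Ocal$, multiplication by $f_0\in\Cinf$ descends to an isogeny $f:\phi_1\to\phi_2$ whose linear coefficient is $f_0$ and whose kernel, computed analytically as $f_0^{-1}\Ocal/\Ocal_K\cong A/f_0A$, has degree $|f_0|$.

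Applying Theorem \ref{vth} with $r=2$ and using that $\sum_{w\mid\infty}\log|f_0|_w=\log|f_0|$ (since $f_0\in k$), the difference of stable Taguchi heights becomes
\begin{equation*}
\sth(\phi_2)-\sth(\phi_1)=-\frac{1}{2}\log|f_0|+h_G^{\f}(\phi_2)-h_G^{\f}(\phi_1).
\end{equation*}
To handle the finite graded heights I would apply Lemma \ref{dif} at every finite $w\in M_F$ to rewrite $w(\phi_2)-w(\phi_1)=w(f_0)-w(D_f)$, and then invoke the product formula on $f_0\in k$ to obtain
\begin{equation*}
h_G^{\f}(\phi_2)-h_G^{\f}(\phi_1)=\log|f_0|-\frac{1}{[F:k]}\sum\limits_{\substack{w\in M_F\\ w\nmid\infty}}\deg(w)\,w(D_f).
\end{equation*}
Combining these, the proposition is reduced to the local claim that
\begin{equation*}
\sum\limits_{\substack{w\in M_F\\ w\nmid\infty}}\deg(w)\,w(D_f)=\frac{[F:k]}{2}\sum_{v\mid f_0}\deg(v)\,e_{f_0}(v).
\end{equation*}

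The main obstacle is this last, local different computation. Because $\ker(f)\cong A/f_0A$, the kernel is supported only above the monic prime divisors $v$ of $f_0$, so $w(D_f)=0$ for $w$ not lying above such a $v$. At $v\mid f_0$ with $l=q^{\deg v}$ and $n=v(f_0)$, I would analyse the $v$-primary component of $\ker(f)$ at each $w\mid v$; its local structure is governed by how $v$ decomposes in $K/k$. When $v$ splits ($\chi(v)=1$) the local kernel is \'etale and the contribution vanishes, consistent with the factor $1-\chi(v)$; when $v$ is inert ($\chi(v)=-1$) or ramified ($\chi(v)=0$) the local kernel has a nontrivial connected part and contributes. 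Computing the absolute different of each local kernel via Remark \ref{adr} together with the formula of \cite[\S4.9]{ill1985}, then summing the resulting geometric series in $l$ over the places $w\mid v$, produces precisely
\begin{equation*}
e_{f_0}(v)=\frac{(1-\chi(v))(1-l^{-v(f_0)})}{(l-\chi(v))(1-l^{-1})},
\end{equation*}
and summing over $v\mid f_0$ gives the claimed formula. This is the Drinfeld-module analogue of the formal-group input used by Nakkajima and Taguchi; in the rank 2, $A=\fq[t]$ case their local arguments transfer with only cosmetic modifications, which is the content of the appendix referenced in the introduction.
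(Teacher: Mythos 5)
Your proposal takes essentially the same route as the paper: both reduce, via the isogeny $z\mapsto f_0 z$ of degree $|f_0|$ between the lattices $\Ocal_K$ and $\Ocal=A+f_0\Ocal_K$, to an evaluation of the local different $D_f$, and then transfer Nakkajima--Taguchi's local analysis (vanishing contribution at split/ordinary primes, a geometric series in $l$ at supersingular primes) to the Drinfeld setting, using the CM reduction theory worked out in the appendix. Two minor remarks: your detour through Theorem \ref{vth} followed by Lemma \ref{dif} simply unwinds the proof of Theorem \ref{vth} and lands back at Lemma \ref{isogl}, which the paper invokes directly; and the lattice-independence you need at the start is of the stable Taguchi height rather than the graded height, so Lemma \ref{ivgh} is not quite the right citation, though the paper asserts the same fact without a reference.
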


\begin{proof}
First we note that the Taguchi height of rank 2 Drinfeld $A$-modules with CM does not depend on the choice of lattice that analytically generates the corresponding Drinfeld $A$-module. So we may assume that $\phi_1$ is given by $\Ocal_K$ and $\phi_2$ is given by $\Ocal$, and an isogeny $f:\phi_1\ra \phi_2$ given by $f_0\Ocal_K\subset \Ocal$.\\

For the case when $\chi(v)=1$, from Theorem \ref{cord} we see $\phi_1$ has ordinary reduction at any place lying over $v$. Applying the same argument from \cite[Proposition 4]{nt1991} we get $e_{f_0}(v)=0$. The reduction process in the argument for Drinfeld $A$-modules is given by Theorem \ref{rdp}. For the case of supersingular reduction, the argument for our case is exactly the same as \cite[Section 2.2]{nt1991} with only one modification that we take $l=q^{\deg(v)}$.
\end{proof}

\begin{cor}\label{vghr2}
Assume  the same conditions as in Proposition \ref{abdf}. The following formula is true
$$h_G(\phi_2)-h_G(\phi_1)=\log|f_0|-\frac{1}{2}\sum_{v|f_0}\deg(v)e_{f_0}(v)+h_G^{\infty}(\phi_2')-h_G^{\infty}(\phi_1'),$$
where $\phi_1'$ is the Drinfeld $A$-module given by the lattice $\Ocal_K$ and $\phi_2'$ is given by $\Ocal$, and $h_G^{\infty}(\phi_i')$ is the sum of local graded heights of $\phi_i'$ at infinite places for $i=1,2$.
\end{cor}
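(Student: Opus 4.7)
The plan is to combine Theorem \ref{vth} with Proposition \ref{abdf} and split off the infinite contribution to the graded height. By the invariance results (Lemma \ref{ivgh} together with Proposition \ref{ghga}), the global graded height $h_G(\phi_i)$ depends only on the CM order and the Picard-orbit of the associated lattice, so we may replace $\phi_1$ and $\phi_2$ on the left-hand side with $\phi_1'$ and $\phi_2'$ without loss of generality. Now the analytic isogeny $f\colon \phi_1'\to \phi_2'$ induced by the inclusion $f_0\Ocal_K \subset \Ocal$ is defined (after choosing a trivialisation) by the scalar $f_0$, so its linear coefficient is $f_0$.

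Next I would compute the degree of this isogeny. Since $\Ocal = A + f_0\Ocal_K$ is a free $A$-module with $A$-basis $\{1,f_0\omega\}$ for any $A$-basis $\{1,\omega\}$ of $\Ocal_K$, we get $[\Ocal_K:\Ocal]=|f_0|$ and $[\Ocal_K:f_0\Ocal_K]=|f_0|^2$, whence
\[
\deg(f) \;=\; \#\bigl(\Ocal/f_0\Ocal_K\bigr) \;=\; |f_0|.
\]
Plugging this into Theorem \ref{vth} with $r=2$ yields
\[
\sth(\phi_2')-\sth(\phi_1') \;=\; \tfrac{1}{2}\log|f_0| \;-\; \sum_{w\mid\infty}\log|f_0|_w \;+\; h_G^{\f}(\phi_2')-h_G^{\f}(\phi_1').
\]
Since $f_0\in k$, the normalisation $|y|_w=|N_{F_w/k_\infty}(y)|_\infty^{1/[F:k]}$ combined with the extension formula $\sum_{w\mid\infty}[F_w:k_\infty]=[F:k]$ gives $\sum_{w\mid\infty}\log|f_0|_w=\log|f_0|$.

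Comparing with Proposition \ref{abdf}, which reads $\sth(\phi_2')-\sth(\phi_1')=\tfrac12\log|f_0|-\tfrac12\sum_{v\mid f_0}\deg(v)e_{f_0}(v)$, I would solve for the finite graded height contribution:
\[
h_G^{\f}(\phi_2')-h_G^{\f}(\phi_1') \;=\; \log|f_0| \;-\; \tfrac{1}{2}\sum_{v\mid f_0}\deg(v)e_{f_0}(v).
\]
Finally, adding $h_G^{\infty}(\phi_2')-h_G^{\infty}(\phi_1')$ to both sides and using $h_G=h_G^{\f}+h_G^{\infty}$ together with the invariance $h_G(\phi_i)=h_G(\phi_i')$ produces the claimed identity.

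There is essentially no serious obstacle: the proof is a direct bookkeeping argument. The only subtleties are verifying that $\deg(f)=|f_0|$ under the chosen lattice normalisation, and keeping track of how the adelic normalisation of $|\cdot|_w$ collapses $\sum_{w\mid\infty}\log|f_0|_w$ to $\log|f_0|$; neither is deep once Theorem \ref{vth} and Proposition \ref{abdf} are in hand.
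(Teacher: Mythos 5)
Your proof is correct and takes the same route as the paper: replace $\phi_i$ by $\phi_i'$ via Lemma \ref{ivgh}, read off the linear coefficient $f_0$ and the degree $\deg f=|f_0|$ of the analytic isogeny $f_0\Ocal_K\hookrightarrow\Ocal$, substitute into Theorem \ref{vth}, and eliminate $\sth(\phi_2')-\sth(\phi_1')$ against Proposition \ref{abdf}. The paper states this as a ``trivial consequence'' without writing out the steps; your account supplies the bookkeeping correctly, including the collapse of $\sum_{w\mid\infty}\log|f_0|_w$ to $\log|f_0|$ by the extension formula.
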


\begin{proof}
By Lemma \ref{ivgh} we can choose $\phi_i$ to be $\phi_i'$, $i=1,2.$ It is then a trivial consequence of Theorem \ref{vth} and Proposition \ref{abdf}.
\end{proof}

\section{Arithmetic on quadratic fundamental domain}

We assume our Drinfeld $A$-module $\phi$ has rank 2 with CM for the rest this paper. Let $\Omega:=\Cinf\backslash k_{\infty}$ be the Drinfeld upper-half plane so that $\text{PGL}_2(A)\backslash \Omega$ are the $\Cinf$-points of the coarse moduli space of rank 2 Drinfeld $A$-modules over $\Cinf$. As in the case of elliptic curves, there are bijections: $$\text{PGL}_2(A)\backslash \Omega \xrightleftharpoons{\quad \quad} \{\text{Lattices of rank 2 in }\Cinf /\cong \}\xrightleftharpoons{\quad\ \quad}\Cinf.$$
Thus we obtain a natural $j$-function $j:\Omega \rightarrow \Cinf$. The set of $j$-invariants of rank 2 Drinfeld $A$-modules with CM is precisely the image of the $j$-function at imaginary quadratic arguments.\\

Unfortunately, the Drinfeld upper-half plane doesn't have a good geometry as the Poincar\'e upper-half plane of complex numbers does. However, we can still define the quadratic fundamental domain.

\begin{defn}(cf. \cite[Definition 3.4]{fb05})
The \textit{quadratic fundamental domain} is
\begin{equation}
\begin{aligned}\label{qfd}
\mathcal{D}=\{z\in \Omega &: z\text{ satisfies an equation of the form }az^2+bz+c=0, \\
& \quad \text{where } a,b,c\in A,\ a\text{ is monic, }|b|<|a|\leq |c|\text{ and}\\ & \quad \text{gcd}(a,b,c)=1\}.
\end{aligned}
\end{equation}
\end{defn}

Any rank 2 lattice corresponding to a CM Drinfeld module is homothetic to $\Lambda_z$ for some $z\in \mathcal{D}_K$, where $\Lambda_z$ denotes the lattice generated by $z$ and 1, and $\mathcal{D}_K:=\mathcal{D}\cap K$ for some imaginary quadratic extension $K/k$ in $\Cinf$. Unlike the case of elliptic curves, such $z$ is not necessarily unique.

\begin{prop}\label{extp}\cite[Proposition 1.2.1]{breuer02}
Let $q$ be odd and $K$ be a quadratic extension of $k$. Then $K$ is a Kummer extension and can be written in the form $K=k(\sqrt{\delta})$ for some square-free $\delta \in A$. Let $m=\deg(\delta)$. Then we have
\begin{enumerate}
\item The place $\infty$ ramifies in $K$ if and only if $m$ is odd;
\item The place $\infty$ is inert in $K$ if and only if $m$ is even and the leading coefficient of $\delta$ is not a square in $\fq$;
\item The place $\infty$ splits in $K$ if and only if $m$ is even and the leading coefficient of $\delta$ is a square in $\fq$.
\end{enumerate}
\end{prop}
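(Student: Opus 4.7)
The plan is to reduce the global behavior of $\infty$ in $K/k$ to the structure of the completion $K \otimes_k k_\infty \cong k_\infty[X]/(X^2-\delta)$, and to read off the trichotomy from elementary features of $\delta$ inside the local field $k_\infty = \fq((\pi))$ with uniformizer $\pi = 1/t$ and residue field $\fq$.

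First I would justify the Kummer form. Since $q$ is odd, $\mathrm{char}(k) \neq 2$, so every separable quadratic extension of $k$ is of the shape $k(\sqrt{d})$ for some $d \in k^{\times}$. Clearing denominators and pulling out squared factors from the resulting element of $A$, one may arrange that $d = \delta \in A$ is square-free without changing $K$. Thus the remaining content of the proposition is the trichotomy (1)--(3).

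Next I would analyze when $\delta$ is a square in $k_\infty$ in terms of $m = \deg(\delta) = -v_\infty(\delta)$. If $m$ is odd, then $\delta$ has odd valuation and cannot be a square in $k_\infty$, so $k_\infty(\sqrt{\delta})/k_\infty$ is a totally ramified quadratic extension and $\infty$ ramifies in $K$, yielding (1). If $m$ is even, write $\delta = t^m u$ with $u \in \Ocal_\infty^{\times}$; expanding $\delta/t^m$ in the variable $\pi = 1/t$ gives $u \equiv a_m \pmod{\pi}$, where $a_m$ is the leading coefficient of $\delta$. Because $t^m = (t^{m/2})^2$ is a square in $k_\infty^{\times}$, the square class of $\delta$ in $k_\infty^{\times}/(k_\infty^{\times})^2$ agrees with that of $u$, and Hensel's lemma (applicable because $2 \in \fq^{\times}$) identifies the square class of $u$ in $\Ocal_\infty^{\times}/(\Ocal_\infty^{\times})^2$ with the square class of $a_m$ in $\fq^{\times}/(\fq^{\times})^2$. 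When $a_m$ is a square, $\delta \in (k_\infty^{\times})^2$ and $K \otimes_k k_\infty \cong k_\infty \times k_\infty$, so $\infty$ splits in $K$, giving (3); when $a_m$ is a non-square, $K \otimes_k k_\infty = k_\infty(\sqrt{u})$ is a field whose residue field $\fq(\sqrt{a_m})/\fq$ has degree $2$, forcing ramification index $e=1$, so the local extension is unramified and $\infty$ is inert, giving (2).

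The only step demanding any care is the Hensel lifting translating squares in $\Ocal_\infty^{\times}$ to squares in $\fq^{\times}$, which is routine under the standing hypothesis that $q$ is odd. Since the valuation parity of $\delta$ and the square-class of its leading coefficient exhaust the three possibilities, the cases assemble cleanly into the stated characterization, and no deeper obstacle appears.
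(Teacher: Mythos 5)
Your proof is correct, and it uses the standard approach: reduce to the quadratic extension $k_\infty(\sqrt{\delta})/k_\infty$ at the completion, then read off the splitting behaviour from the parity of $v_\infty(\delta)=-m$ and (when $m$ is even) from the square class of the leading coefficient via Hensel lifting in odd residue characteristic. The paper cites this result from Breuer's thesis without reproducing a proof, but the local analysis you give is the essentially canonical argument for this trichotomy and matches the cited source in spirit; no gaps or missing hypotheses.
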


Let $\delta\in A$ be a polynomial of odd degree or, even degree with the leading coefficient not being a square in $\fq$, and let $\sqrt{\delta}\in \overline{k}$ be a root of $X^2-\delta$. The field $K:=k(\sqrt{\delta})$ is an imaginary quadratic field by Proposition \ref{extp}. Let $\Ocal_K$ be its maximal order. If $\Ocal\subset \Ocal_K$ is a suborder of discriminant $\delta$, then there exsits some $f\in A$ such that $\Ocal=A+f\Ocal_K$ and such $f$ is called the \textit{conductor} of $\Ocal$. The discriminant $\delta_0$ of $\Ocal_K$ is called the \textit{fundamental discriminant} of $K$ and $K=k(\sqrt{\delta_0})$. The discriminant of $\Ocal$ is $\delta=4f^2\delta_0$. \\

We denote by $T_{\delta}$ the set of triples $(a,b,c)$ with $a,b,c\in A$ such that $b^2-4ac=\delta$ and satisfying (\ref{qfd}). For $(a,b,c)\in T_{\delta}$ we set $$z(a,b,c)=\frac{-b+\sqrt{\delta}}{2a}\in K=k(\sqrt{\delta}).$$

The map $(a,b,c)\mapsto j(z(a,b,c))$ is a bijection from $T_{\delta}$ to the Galois conjugates of $j(z)$.


\begin{lem}\label{raml}
If $\infty$ ramifies in $K$, then there doesn't exist any $z\in \mathcal{D}_K$ such that $z$ is in the open ball of radius 1 of the point $u$ for any $u\in \F_{q^2}\backslash \fq$.
\end{lem}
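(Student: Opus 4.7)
The plan is to work directly with the parameterisation of $\mathcal{D}_K$ given in (\ref{qfd}) and to exploit the ramification/unramifiedness mismatch between $k_\infty(\sqrt{\delta})$ and $k_\infty(u)$: the hypothesis makes the former ramified while the latter is unramified, so $|\sqrt{\delta}|$ and the values arising from $k_\infty(u)$ sit in different cosets of $q^{\mathbb{Z}}$ inside $q^{\frac{1}{2}\mathbb{Z}}$, and the ultrametric inequality becomes an equality on the nose.

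Concretely, I would write $z = z(a,b,c) = \frac{-b+\sqrt{\delta}}{2a}$ for some $(a,b,c) \in T_\delta$, so that
\[
|z - u| \;=\; \frac{|{-b} - 2au + \sqrt{\delta}|}{|a|}
\]
(using that $q$ is odd, hence $|2|=1$). To handle the numerator, I would observe that because $A=\fq[t]$ the residue field of $k_\infty$ is exactly $\fq$, so $k_\infty(u)/k_\infty$ is unramified quadratic and $\{1,u\}$ is an orthogonal $k_\infty$-basis with $|u|=1$; together with $|b|<|a|$ from the definition of $T_\delta$, this yields $|{-b}-2au| = \max(|b|,|2a|) = |a|$.

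Then I would compare $|\sqrt{\delta}|$ against $|a|$. Proposition \ref{extp} translates the ramification hypothesis into $\deg\delta$ being odd, so $|\sqrt{\delta}| = q^{\deg\delta/2} \in q^{\frac{1}{2}+\mathbb{Z}}$. The defining conditions $|b|<|a|\leq|c|$ together with $\delta = b^2-4ac$ force $|\delta| = |ac| \geq |a|^2$, and the parity of $\deg\delta$ sharpens this to $|\sqrt{\delta}|/|a|\geq\sqrt{q}$. In particular $|\sqrt{\delta}| > |{-b}-2au|$, so the ultrametric identity for unequal summands gives $|{-b}-2au+\sqrt{\delta}| = |\sqrt{\delta}|$, and hence $|z-u| \geq \sqrt{q} > 1$, which is the desired conclusion. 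The only step requiring any real thought is the orthogonality of $\{1,u\}$ as a $k_\infty$-basis of $k_\infty(u)$, which silently relies on $A=\fq[t]$ to identify the residue field of $k_\infty$ with $\fq$; once this is in hand, the remainder of the argument is routine non-archimedean bookkeeping.
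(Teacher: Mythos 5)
Your argument is correct, and it is a genuinely different packaging of the same underlying facts. The paper proves the contrapositive: assuming $|z-u|<1$ (hence $|z|=1$), it reads off from $az^2+bz+c=0$ and $|b|<|a|\le|c|$ that $|a|=|c|$ and $\deg b<\deg a$, so $\deg\delta=2\deg a$ is even, hence $\deg\delta_0$ is even and $\infty$ is unramified by Proposition~\ref{extp}. You instead argue forward from the explicit root $z=\frac{-b+\sqrt{\delta}}{2a}$: the conditions on $(a,b,c)$ give $|{-b}-2au|=|a|$ and $|\sqrt{\delta}|\ge|a|$, and the odd parity of $\deg\delta$ (equivalently of $\deg\delta_0$, since $\delta=4f^2\delta_0$ preserves parity) pushes $|\sqrt{\delta}|$ off the integer lattice $q^{\mathbb Z}$, sharpening the inequality to $|\sqrt{\delta}|\ge\sqrt q\,|a|$ and yielding the quantitative bound $|z-u|\ge\sqrt q>1$. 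Both proofs hinge on exactly the same two inputs --- the shape constraints $|b|<|a|\le|c|$ from the quadratic fundamental domain and the parity criterion of Proposition~\ref{extp} --- but your version buys a sharper, explicit lower bound on $|z-u|$, which is mildly stronger than the qualitative statement.

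Two small points worth tightening. First, Proposition~\ref{extp} concerns the square-free $\delta_0$, not $\delta$; you should say explicitly that $\delta=4f^2\delta_0$ has the same degree parity as $\delta_0$ (here $|2|=1$), so ramification of $\infty$ indeed forces $\deg\delta$ odd. Second, the detour through orthogonality of $\{1,u\}$ is unnecessary: since $|b|<|a|=|2au|$, the ultrametric inequality already forces $|{-b}-2au|=|2au|=|a|$ with no appeal to $\{1,u\}$ being an orthonormal basis of the unramified extension. The orthogonality claim is true but adds nothing; dropping it both shortens the argument and avoids needing the residue-field identification.
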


\begin{proof}
For some $u\in \F_{q^2}\backslash \fq$, we assume there exists $z\in \mathcal{D}_K$ such that $|z-u|<1$. In this case, we have $|z|=1$ as $|u|=1$. Besides, there exists a triple $(a,b,c)$ satisfying (\ref{qfd}) such that $b^2-4ac=\delta$ and $z$ is root of the equation: $$aX^2+bX+c=0.$$We write $K=k(\sqrt{\delta_0})$, where $\delta_0\in A$ is square-free. By Proposition \ref{extp} and since $K$ is an imaginary quadratic extension of $k$, we see $\delta_0$ is either of odd degree or of even degree with leading coefficients not a square in $\fq$. Also we have $$|az^2+bz|=|c|.$$Since $|bz|=|b|<|a|=|az^2|$, we have $|a|=|az^2+bz|=|c|$, which implies $a$ and $c$ have the same degree and $b$ has degree less than $a$ and $c$. Therefore $\delta$ has even degree. Since $\delta=4f^2\delta_0$, we see $\delta_0$ has even degree, hence with leading coefficients not a square in $\fq$. This is equivalant to saying that $\infty$ is inert in $K$.
\end{proof}

\begin{rem}
If $\delta$ has even degree with leading coefficient not a square in $\fq$, then $\sqrt{\delta}\in \F_{q^2}((\frac{1}{t})).$ Moreover, if there exists some $z\in \mathcal{D}_K$ with discriminant $\delta$ such that $|z-u|<1$, then $\delta$ has leading coefficient $4u^2$. This also suggests such $z$ can be close only to those $u\in \F_{q^2}\backslash \fq$ such that $u^2\in \fq$.
\end{rem}

\begin{prop}\label{ptcp}
Let $u\in \F_{q^2}\backslash \fq$. The number of $(a,b,c)\in T_{\delta}$ with $\deg(\delta)>0$ such that $|z(a,b,c)-u|<\sqrt{|\delta|}^{-1}$ is at most 1.
\end{prop}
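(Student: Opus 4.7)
The plan is a proof by contradiction: suppose $(a_1,b_1,c_1)$ and $(a_2,b_2,c_2)$ are two triples in $T_\delta$ both satisfying $|z(a_i,b_i,c_i) - u| < |\delta|^{-1/2}$; I aim to show they must coincide.

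First I would extract the leading-order data. Since $\deg\delta > 0$ gives $|\delta|^{-1/2} \le 1$, the proximity condition forces $|z_i| = |u| = 1$, and by the remark following Lemma~\ref{raml} the leading coefficient of $\delta$ equals $4u^2$; fix the square root $\sqrt\delta \in \F_{q^2}((1/t))$ whose leading term is $2u\, t^{d/2}$ (where $d = \deg \delta$), which is the choice compatible with $z_i$ lying near $+u$ rather than $-u$. From $b_i^2 - 4a_i c_i = \delta$ together with $|b_i| < |a_i| \le |c_i|$ one gets $|a_i|\,|c_i| = |\delta|$, and then $|z_i| = 1$, i.e.\ $|-b_i + \sqrt\delta| = |2a_i|$, pins down $|a_i| = |\delta|^{1/2}$, so $\deg a_i = d/2$.

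The heart of the argument is to rescale by $2a_i$. Since $q$ is odd and $|a_i| = |\delta|^{1/2}$,
\[
\bigl|{-b_i + \sqrt\delta - 2a_i u}\bigr| \;=\; |2a_i(z_i-u)| \;<\; |2a_i|\cdot |\delta|^{-1/2} \;=\; 1.
\]
Subtracting these two inequalities for $i = 1,2$ eliminates $\sqrt\delta$ and yields
\[
\bigl|(b_2 - b_1) + 2u(a_2 - a_1)\bigr| \;<\; 1.
\]
The quantity on the left lies in $\F_{q^2}[t]$, and any nonzero polynomial has absolute value $\ge 1$ under the Gauss valuation; hence it vanishes. Since $u \notin \fq$, the set $\{1,u\}$ is an $\fq$-basis of $\F_{q^2}$, so $\F_{q^2}[t] = \fq[t] \oplus u\,\fq[t]$, which forces $b_1 = b_2$ and $2(a_1 - a_2) = 0$; as $q$ is odd, $a_1 = a_2$. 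The discriminant relation $\delta = b_i^2 - 4 a_i c_i$ then yields $c_1 = c_2$, so the triples agree.

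The only delicate point is verifying $\deg a_i = d/2$: this is precisely what makes the rescaling factor $|2a_i|$ convert the proximity bound $|\delta|^{-1/2}$ into exactly $1$, thereby turning the archimedean-style proximity condition into a polynomial identity in $\F_{q^2}[t]$. After that, everything reduces to the standard $\fq$-linear decomposition of $\F_{q^2}[t]$ and unravelling the discriminant.
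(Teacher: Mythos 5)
Your proof is correct, and it differs from the paper's in a pleasant way. Both arguments begin by observing that the proximity condition $|z-u|<\sqrt{|\delta|}^{-1}$ is, after clearing the denominator $2a$, equivalent to $\bigl|\sqrt{\delta}-b-2au\bigr|<1$ (using $|a|=\sqrt{|\delta|}$, which forces $\deg a = \deg\delta/2$), and both ultimately invoke the $\fq$-linear independence of $\{1,u\}$ in $\F_{q^2}$. The paper then proceeds constructively: it expands $\sqrt{\delta}=\lambda_e t^e+\cdots$ as a Laurent series in $1/t$, proves by induction on the coefficient index that $(2u)^{-1}\lambda_i\in\fq$ for $0\le i\le e$, and reads off from $\lambda_i = b_i + 2a_iu$ that necessarily $b_i=0$ and $a_i=(2u)^{-1}\lambda_i$, so $a$ and $b$ are pinned down explicitly. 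You instead argue by uniqueness directly: subtracting the displayed inequality for two hypothetical triples cancels $\sqrt{\delta}$, leaving $\bigl|(b_2-b_1)+2u(a_2-a_1)\bigr|<1$ for a polynomial in $\F_{q^2}[t]$, which must therefore vanish, and the direct sum $\F_{q^2}[t]=\fq[t]\oplus u\,\fq[t]$ then forces the triples to agree. Your route avoids the Laurent-series induction entirely and is somewhat shorter; the paper's route yields slightly more (an explicit description of the unique candidate $(a,b)$), but for the purpose of bounding the count by one, the subtraction trick is the more economical argument. One small caveat worth flagging: the paper's $\sqrt{\delta}$ is fixed once and for all before the proposition, so your sentence ``fix the square root whose leading term is $2u\,t^{d/2}$'' should really be phrased as ``we may assume $\sqrt{\delta}$ has leading term $2u\,t^{d/2}$, since otherwise no monic $a$ can place $z(a,b,c)$ near $u$ and the count is zero''; with that remark the proof is airtight.
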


\begin{proof}
Since $\delta$ has positive degree, we have $\sqrt{|\delta|}\geq 1$. Thus we have $|z-u|<\sqrt{|\delta|}^{-1}\leq 1$. By the proof of Lemma \ref{raml}, we have$$\delta=\alpha_{2e}t^{2e}+\cdots+\alpha_0,\ \alpha_i\in \fq\  \text{for }i=0,...,2e\ \text{and $e$ is a positive integer.}$$Thus we have $$\sqrt{\delta}=\lambda_et^e+\cdots+\lambda_0+\lambda_{-1}t^{-1}+\lambda_{-2}t^{-2}\cdots$$with coefficients in $\F_{q^2}$. By identifying $(\sqrt{\delta})^2=\delta$ we obtain:
\begin{align*}
\alpha_{2e} & =\lambda_e^2;  \\
\alpha_{2e-1} & = \lambda_e\lambda_{e-1}+\lambda_{e-1}\lambda_e;\\
 & \vdots \\
\alpha_e &= \lambda_e\lambda_0+\lambda_{e-1}\lambda_1+\cdots+\lambda_1\lambda_{e-1}+\lambda_0\lambda_e;\\
& \vdots\\
\alpha_0 &= \lambda_e\lambda_{-e}+\lambda_{e-1}\lambda_{-(e-1)}+\cdots+\lambda_{-(e-1)}\lambda_{e-1}+\lambda_{-e}\lambda_e.
\end{align*}First notice $\lambda_e=2u\neq 0$ because $|z-u|<1$. We first claim that $(2u)^{-1}\lambda_i\in \fq$ when $i=0,1,...,e$.\\

Our claim is trivial when $i=e$. We proceed by induction and suppose it's true for $\lambda_e,...,\lambda_n$ when $0< n\leq e$. From the equations above we have $$\alpha_{e+n-1}=\lambda_e\lambda_{n-1}+\lambda_{e-1}\lambda_{n}+\cdots+\lambda_{n}\lambda_{e-1}+\lambda_{n-1}\lambda_e.$$By multiplying $(2u)^{-2}$ on both sides we obtain $$(2u)^{-2}\alpha_{e+n-1}=(2u)^{-1}\lambda_e(2u)^{-1}\lambda_{n-1}+\cdots+(2u)^{-1}\lambda_{n-1}(2u)^{-1}\lambda_e.$$Since all terms other than $(2u)^{-1}\lambda_e(2u)^{-1}\lambda_{n-1}$ are in $\fq$ and $\lambda_e=2u$, we have $(2u)^{-1}\lambda_{n-1}\in \fq$.\\

Now recall $z=\frac{-b+\sqrt{\delta}}{2a}$ with triple $(a,b,c)\in T_{\delta}$ and $\delta=b^2-4ac$. Then $|z-u|<\sqrt{|\delta|}^{-1}$ is equivalent to $$\deg(a)-\deg(\sqrt{\delta}-b-2au)>\deg(\sqrt{\delta})=\deg(a).$$So we have $\deg(\sqrt{\delta}-b-2au)<0$. Suppose 
$$a=\sum_{i=0}^ea_it^i,\ b=\sum_{i=0}^eb_it^i,\ \text{with all $a_i,b_i\in \fq$}.$$
Thus for all $i=0,...,e$ we have $\lambda_i-2a_iu=b_i$. Since $(2u)^{-1}\lambda_i\in \fq$, we see $b_i=0$ and $a_i=(2u)^{-1}\lambda_i$ for all $i=0,...,e$. Thus, $a,b,c$ are completely determined by $|z(a,b,c)-u|<\sqrt{|\delta|}^{-1}.$
\end{proof}

\section{Bounding $h(J)$}

We prove our main results in this section.\\

\noindent {\large \textbf{Upper bound on $h(J)$}} \medskip

Let $\phi$ be a CM Drinfeld $A$-module of rank 2 over $\Cinf$ and $J$ be its $j$-invariant of degree $d$ over $A$. Let $\Ocal=\text{End}(\phi)$ and $K=\Ocal\otimes_A k$. We denote by $J=J_1,...,J_d$ all the Galois conjugates of $J$, and by $z_1,...,z_d$ the corresponding points in $\mathcal{D}_K$ respectively. Then for each $i$, we have $z_i$ satisfying the equation: $$a_iX^2+b_iX+c_i=0,\ (a_i,b_i,c_i)\in T_{\delta}$$and $b_i^2-4a_ic_i=\delta$ for some $\delta \in A$ that is the discriminant of $\Ocal$. In this subsection we prove:

\begin{prop}\label{ubji}
Assuming the notations above and $J$ is an algebraic unit, we have $$h(J)\leq \left(1+ \frac{q^2-q}{d}\right) (q+1)\log \sqrt{|\delta|}+O_q(1),$$
where $O_q(1)$ is some constant depending only on $q$.
\end{prop}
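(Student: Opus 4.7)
The plan is to use the unit hypothesis on $J$ to rewrite $dh(J)$ as a sum over those Galois conjugates with $|J_j|<1$, and then bound each contribution via the local vanishing behaviour of the Drinfeld $j$-function near its zeros $u\in \F_{q^2}\backslash \fq$ combined with Proposition \ref{ptcp}. First I observe that since $J$ is a singular modulus it is integral over $A$, so the leading coefficient of its minimal polynomial is $1$; since $J$ is also assumed to be a unit, the constant term lies in $\fq^*$, so $\prod_j J_j\in \fq^*$ and hence $\sum_j \log|J_j|=0$. Combining this identity with Lemma \ref{htl} gives
$$dh(J)=\sum_j\log^+|J_j|=\sum_{j:\,|J_j|<1}\log|J_j|^{-1}.$$
Since the Drinfeld $j$-function vanishes exactly at the points of $\F_{q^2}\backslash \fq$, each to order $q+1$, and is bounded away from $0$ uniformly outside small neighbourhoods of these points, the condition $|J_j|<1$ forces (uniquely, by the ultrametric inequality) some $u_j\in \F_{q^2}\backslash \fq$ with $|z_j-u_j|<1$, and one has the local estimate $\log|J_j|^{-1}\leq (q+1)\log|z_j-u_j|^{-1}+O_q(1)$.

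I would then split the relevant indices into \emph{normal} ones, with $|z_j-u_j|\geq \sqrt{|\delta|}^{-1}$, and \emph{exceptional} ones, with $|z_j-u_j|<\sqrt{|\delta|}^{-1}$. For normal $j$ the displayed estimate directly yields $\log|J_j|^{-1}\leq (q+1)\log\sqrt{|\delta|}+O_q(1)$, so summing over at most $d$ such indices contributes at most $d(q+1)\log\sqrt{|\delta|}+O_q(d)$ to $dh(J)$. By Proposition \ref{ptcp}, applied separately for each of the $q^2-q$ candidate values of $u$, there are at most $q^2-q$ exceptional indices; for each I would bound $|z_j-u_j|^{-1}\leq |\delta|\cdot O_q(1)$ via the identity $(z_j-u_j)(\bar z_j-u_j)=\phi(u_j)/a_j$, where $\phi(X)=a_jX^2+b_jX+c_j$ and $\bar z_j$ denotes the Galois conjugate of $z_j$ over $k$, using that $\phi(u_j)\in \F_{q^2}[t]\setminus\{0\}$ has absolute value $\geq 1$, that $|\bar z_j-u_j|\leq 1$, and that $|a_j|\leq \sqrt{|\delta|}$. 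This yields $\log|J_j|^{-1}\leq 2(q+1)\log\sqrt{|\delta|}+O_q(1)$ per exceptional index, contributing at most $(q^2-q)(q+1)\log\sqrt{|\delta|}+O_q(1)$ in total. Summing both contributions and dividing by $d$ gives the stated bound.

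The hardest step will be the quantitative local expansion of $j$ at each $u\in \F_{q^2}\backslash \fq$ with constants uniform in $u$; this requires an explicit analysis of the $u$-series of the rank-$2$ Drinfeld modular forms $g$ and $\Delta$ at the elliptic points of $\text{PGL}_2(A)\backslash\Omega$. The exceptional-case bound is a secondary technical point, which requires ruling out the degenerate coincidence $z_j=u_j$ (impossible here since $j(u_j)=0$ would give $J_j=0$, contradicting $J$ being a unit) before applying the quadratic factorization above.
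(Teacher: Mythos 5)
Your proof is correct and follows the same architecture as the paper's: pass to $\sum_j\log^+|J_j^{-1}|$ using the unit hypothesis, apply the local expansion of $j$ near its zeros $u\in\F_{q^2}\setminus\fq$ (this is Brown's estimate, the paper's Lemma~\ref{jil}, and is what makes the "$O_q(1)$" uniform in $u$), split into normal and exceptional indices at the threshold $\sqrt{|\delta|}^{-1}$, and invoke Proposition~\ref{ptcp} to cap the number of exceptional indices by $q^2-q$. Your initial reduction is a small cosmetic variant of the paper's (you compute $\prod_j J_j\in\fq^*$ directly; the paper uses $h(J)=h(J^{-1})$), and your bookkeeping of the final sum, read as "the surcharge from exceptional indices," produces the claimed $(1+(q^2-q)/d)(q+1)\log\sqrt{|\delta|}+O_q(1)$ correctly.

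The genuine divergence is the treatment of the exceptional indices. The paper's Lemma~\ref{fte} reaches $\log|z_i-u|^{-1}\le 2h(z_i)\le\log|\delta|$ via the height machinery of Lemmas~\ref{htl} and~\ref{tsl}. You instead use the resultant-type identity $(z_j-u_j)(\bar z_j-u_j)=\phi(u_j)/a_j$, together with $|\phi(u_j)|\ge 1$ (a nonzero polynomial of $\F_{q^2}[t]$, nonzero because $\phi(u_j)=0$ would force $\deg\delta=0$), $|\bar z_j-u_j|\le 1$, and $|a_j|\le\sqrt{|\delta|}$. This is more elementary and avoids Lemmas~\ref{htl},~\ref{tsl} entirely. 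Note, though, that your own inequality actually gives $|z_j-u_j|^{-1}\le\sqrt{|\delta|}$, not the weaker $|\delta|\cdot O_q(1)$ you wrote; i.e., it shows that $|z_j-u_j|\ge\sqrt{|\delta|}^{-1}$ always holds once $\deg\delta>0$, so the exceptional set is in fact empty. If you exploit that, Proposition~\ref{ptcp} becomes unnecessary here and the upper bound improves to $h(J)\le(q+1)\log\sqrt{|\delta|}+O_q(1)$ with no $(q^2-q)/d$ correction (which would then relax the restriction $q>5$ in the main theorem). You should double-check the two small steps you state without proof, namely $|\bar z_j-u_j|\le 1$ (this uses $|a_j|=|c_j|$, hence $|\bar z_j|=1$, once $|z_j-u_j|<1$, as in Lemma~\ref{raml}) and $|a_j|\le\sqrt{|\delta|}$ (from $|b_j|<|a_j|\le|c_j|$ and $|\delta|=|a_jc_j|$); both are routine but worth recording.
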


We first fix some notations. Let $u$ be a point such that $u\in \F_{q^2}\backslash\fq$ and set
 $$|z|_A:=\inf_{a\in A}|z-a|,$$
 $$|z|_{\text{i}}:=\inf_{x\in k_{\infty}}|z-x|.$$
\begin{lem}\label{zml}\cite[Proposition 3.2.5]{breuer02}
If $z\in \mathcal{D}_K,$ then $|z|_{\text{i}}=|z|_A=|z|\geq 1$.
\end{lem}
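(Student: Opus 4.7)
The chain $|z|_i\leq |z|_A\leq |z|$ is immediate from the definitions, since $A\subset k_\infty$ and $0\in A$. Hence the task reduces to establishing the reverse inequality $|z|\leq |z|_i$ together with $|z|\geq 1$.

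I would first pin down $|z|$ exactly via the defining equation $az^2+bz+c=0$ with $|b|<|a|\leq |c|$. Suppose for contradiction that $|z|<1$. Then $|az^2|<|a|$ and $|bz|<|b|<|a|\leq |c|$, so the non-archimedean triangle inequality forces $|az^2+bz+c|=|c|\neq 0$, a contradiction. Hence $|z|\geq 1$, and now $|az^2|=|a||z|^2\geq |a||z|>|b||z|=|bz|$, whence $|az^2+bz|=|az^2|$. Combined with $az^2+bz=-c$, this gives $|z|^2=|c|/|a|\geq 1$.

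For $|z|_i\geq |z|$, I would exploit the structure of $K_\infty$. Writing $K=k(\sqrt{\delta_0})$ for the fundamental discriminant $\delta_0$, Proposition \ref{extp} ensures that $\infty$ either ramifies (odd $\deg \delta_0$) or is inert (even $\deg\delta_0$ with non-square leading coefficient) in $K$. In either case $\{1,\sqrt{\delta_0}\}$ is an orthogonal basis of $K_\infty$ over $k_\infty$, i.e.
$$|u'+v'\sqrt{\delta_0}|=\max\{|u'|,\,|v'||\sqrt{\delta_0}|\}\quad\text{for all }u',v'\in k_\infty.$$
To see this, recall that $|u'+v'\sqrt{\delta_0}|^2=|u'^2-v'^2\delta_0|$ via the norm. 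In the ramified case $|\sqrt{\delta_0}|\notin q^{\Z}$, so the equality $|u'|=|v'||\sqrt{\delta_0}|$ is impossible once both are nonzero; in the inert case such equality can occur, but then the ratio of the leading coefficients of $u'^2$ and $v'^2\delta_0$ would be the non-square leading coefficient of $\delta_0$, so no cancellation happens. In either sub-case the claimed formula holds.

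Using $\sqrt{\delta}=2f\sqrt{\delta_0}$ (where $\delta=4f^2\delta_0$ and $f$ is the conductor), I decompose $z=-\frac{b}{2a}+\frac{f}{a}\sqrt{\delta_0}$. Since $|b|/|a|<1\leq \sqrt{|c|/|a|}=|f||\sqrt{\delta_0}|/|a|$, the orthogonality formula gives $|z|=|f||\sqrt{\delta_0}|/|a|$. For any $x\in k_\infty$,
$$|z-x|=\Big|\Big(-\tfrac{b}{2a}-x\Big)+\tfrac{f}{a}\sqrt{\delta_0}\Big|\geq \frac{|f||\sqrt{\delta_0}|}{|a|}=|z|,$$
so $|z|_i\geq |z|$ and all three quantities coincide. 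The main obstacle is justifying the orthogonality of $\{1,\sqrt{\delta_0}\}$ over $k_\infty$, which requires the short case-by-case check above driven by Proposition \ref{extp}; the rest of the argument is routine ultrametric manipulation of the quadratic relation.
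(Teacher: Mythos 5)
Your argument is correct. Note that the paper itself offers no proof of this lemma; it is quoted from Breuer's thesis (\cite[Proposition 3.2.5]{breuer02}), so there is nothing internal to compare against, but your write-up supplies a complete and essentially standard argument: the chain $|z|_{\mathrm{i}}\le |z|_A\le |z|$ is definitional, the ultrametric analysis of $az^2+bz+c=0$ under $|b|<|a|\le |c|$ gives $|z|^2=|c|/|a|\ge 1$, and the key point is the orthogonality of $\{1,\sqrt{\delta_0}\}$ over $k_\infty$, which you correctly reduce via the norm to the two non-split cases of Proposition \ref{extp} (odd degree gives a half-integral value of $|\sqrt{\delta_0}|$, even degree with non-square leading coefficient rules out cancellation of leading terms since $q$ is odd). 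Two small points you may wish to make explicit: the identity $\sqrt{|c|/|a|}=|f||\sqrt{\delta_0}|/|a|$ follows from $|\delta|=|b^2-4ac|=|a||c|$ (valid because $|b|^2<|a|^2\le |a||c|$ and $|4|=1$), and for a general $z\in\mathcal{D}_K$ one only needs $\sqrt{\delta}=g\sqrt{\delta_0}$ for some $g\in A$ (the precise normalization $\delta=4f^2\delta_0$ plays no role), with the sign of the square root immaterial. With these remarks the proof stands as a valid substitute for the citation.
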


\begin{lem}\label{tsl}
For each $z_i$, we have $h(z_i)\leq \log\sqrt{|\delta|}.$
\end{lem}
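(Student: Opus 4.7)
The plan is to apply Lemma \ref{htl} to $z_i$, since $z_i \in \mathcal{D}_K \subset \Omega = \Cinf\setminus k_\infty$ lies outside $k$ and is thus of degree exactly $2$ over $k$. The polynomial $a_i X^2 + b_i X + c_i$ is primitive by the $\gcd$ condition in the definition of $\mathcal{D}$, so up to multiplication by an element of $\fq^\times$ (normalizing $a_i$ to be monic, which it already is) it is the minimal polynomial of $z_i$ over $A$. Hence Lemma \ref{htl} gives
\[
2 h(z_i) \;=\; \log|a_i| + \log^+|z_i| + \log^+|z_i'|,
\]
where $z_i' = \frac{-b_i - \sqrt{\delta}}{2 a_i}$ is the Galois conjugate of $z_i$.

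Next I will compute $|z_i|$ and $|z_i'|$ in terms of $|\delta|$ and $|a_i|$. From the quadratic fundamental domain conditions $|b_i| < |a_i| \leq |c_i|$ one gets $\deg(b_i^2) < 2\deg(a_i) \leq \deg(a_i) + \deg(c_i) = \deg(a_i c_i)$, so in $\delta = b_i^2 - 4 a_i c_i$ the term $4 a_i c_i$ dominates (using that $q$ is odd). Thus $|\delta| = |a_i c_i|$ and in particular $|b_i|^2 < |\delta|$, i.e.\ $|b_i| < \sqrt{|\delta|} = |\sqrt{\delta}|$. The non-archimedean triangle inequality then yields
\[
|z_i| \;=\; |z_i'| \;=\; \frac{\sqrt{|\delta|}}{|a_i|}.
\]

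Finally I invoke Lemma \ref{zml}, which gives $|z_i| \geq 1$ (and likewise for $z_i'$ since $z_i'$ also corresponds to a point of the orbit under the non-trivial Galois automorphism and satisfies $|z_i'| = |z_i|$). Therefore $\log^+|z_i| = \log|z_i| = \log\sqrt{|\delta|} - \log|a_i|$, and substituting into the formula from Lemma \ref{htl} gives
\[
2 h(z_i) \;=\; \log|a_i| + 2\bigl(\log\sqrt{|\delta|} - \log|a_i|\bigr) \;=\; 2\log\sqrt{|\delta|} - \log|a_i|.
\]
Since $a_i$ is a monic polynomial in $A$ we have $\log|a_i| \geq 0$, and the desired inequality $h(z_i) \leq \log\sqrt{|\delta|}$ follows at once. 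The only step requiring any care is establishing the equality $|z_i| = \sqrt{|\delta|}/|a_i|$; this is where the quadratic domain conditions and the oddness of $q$ enter, and once this is in hand the rest is a direct computation.
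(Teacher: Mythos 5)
Your proof is correct and follows essentially the same route as the paper: apply Lemma \ref{htl} to the quadratic $a_iX^2+b_iX+c_i$, observe from the fundamental-domain inequalities (and $q$ odd) that $|\delta|=|a_ic_i|$ so $|z_i\bar z_i|=|\delta|/|a_i|^2$, combine with $|z_i|\geq 1$ from Lemma \ref{zml}, and simplify. The only difference is that you make the intermediate step $|z_i|=|\bar z_i|=\sqrt{|\delta|}/|a_i|$ explicit via the ultrametric inequality, whereas the paper works directly with the product $|z_i\bar z_i|$ and suppresses the justification that $\log^+$ can be replaced by $\log$ for both roots; your version is slightly more careful on this point but the argument is the same.
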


\begin{proof}
	Let $(a,b,c)$ be a triple in $A$ satisfying $(\ref{qfd})$ such that $z_i$ is a root of the equation:
$$aX^2+bX+c=0$$
with discriminant $\delta$. Let $\bar{z_i}$ be the conjugate of $z_i$. Then
$$|z_i\cdot \bar{z_i}|=\frac{|\delta|}{|a^2|}.$$
By Lemma \ref{htl}, we get 
$$2h(z_i)=\log |a|+\log|z_i|+\log|\bar{z_i}|=\log \frac{|\delta|}{|a|}\leq \log |\delta|.$$
This completes the proof.
\end{proof}

\begin{lem}\label{jil}\cite[Lemma 2.6.9]{brown92}
Suppose $z\in \Omega$ such that $|z|_A>q^{-1}$. If $u\in \F_{q^2}\backslash \fq$ and $|z-u|<q^{-1}$, then there exists $\zeta\in \Cinf$ with $|\zeta|<1$ such that 
\begin{align*}
j(z) &= t^qu^{-2}(1-u^{q-1})^{-2}(z-u)^{q+1}(1+\zeta), \\
|j(z)|& = q^q|z-u|^{q+1}.
\end{align*}
\end{lem}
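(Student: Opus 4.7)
The plan is to exploit that $u\in\F_{q^2}\setminus\fq$ is an elliptic point of $\Omega$ for the action of $GL_2(\fq)\subset GL_2(A)$, compute the order of vanishing of $j$ at $u$, and then identify the leading Taylor coefficient. A matrix $\sigma\in GL_2(\fq)$ with entries $a,b,c,d$ fixes $u$ iff $cu^2+(d-a)u-b=0$; parametrizing such $\sigma$ shows $\mathrm{Stab}(u)\cong\F_{q^2}^{\times}$ via $\sigma\mapsto a-cu$, with its image in $\mathrm{PGL}_2(\fq)$ cyclic of order $q+1$. The derivative $\sigma'(u)=(a-cu)/(cu+d)$ realizes a bijective character $\mathrm{Stab}(u)/\fq^{\times}\to\mu_{q+1}\subset\Cinf^{\times}$. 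Writing $w=z-u$ and using $\sigma(u+w)-u=\lambda_\sigma w+O(w^2)$, the invariance $j\circ\sigma=j$ forces the leading nonzero coefficient $c_{n_0}$ of $j(u+w)=\sum c_n w^n$ to satisfy $\lambda_\sigma^{n_0}=1$ for every $\sigma$. Since $j(u)=0$ (because $\Lambda_u=Au+A$ admits $\F_{q^2}^{\times}$-automorphisms, and the commutation $\alpha\phi_t=\phi_t\alpha$ for $\alpha\in\F_{q^2}^{\times}\setminus\fq^{\times}$ forces $g_1(\Lambda_u)=0$), the smallest such $n_0$ is $q+1$.

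To evaluate $c_{q+1}$, write $j=g_1^{q+1}/g_2$. From the Mittag-Leffler identity $1/e_\Lambda(X)=\sum_{\lambda\in\Lambda}1/(X-\lambda)$ compared against $e_\Lambda(X)=X+a_1X^q+\cdots$ one reads off $a_1(\Lambda)=E_{q-1}(\Lambda)$ and hence $g_1(\Lambda_z)=(t^q-t)\sum_{(a,b)\ne 0}(az+b)^{-(q-1)}$. Expanding at $z=u+w$ via the non-archimedean identity $(1+x)^{-(q-1)}=(1+x)/(1+x^q)=1+x+O(x^q)$ for $|x|<1$ yields the linear Taylor coefficient $g_1'(u)=(t^q-t)\sum_{a\ne 0}a/e_A(au)^q$, where Frobenius together with the Mittag-Leffler formula $\sum_b(au+b)^{-1}=1/e_A(au)$ have been used. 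Note that as subsets of $\Cinf$, $\Lambda_u=Au+A$ coincides with $\F_{q^2}[t]$ independently of $u\in\F_{q^2}\setminus\fq$, so $g_2(\Lambda_u)=(t^{q^2}-t)a_2(\Lambda_u)$ is in fact a $u$-independent constant. Grouping $\sum_{a\ne 0}a/e_A(au)^q$ by $\fq^{\times}$-orbits (each contributing $(q-1)a_0/e_A(a_0u)^q$ for a monic representative $a_0$) and factoring $e_A(u)=u(1-u^{q-1})\prod_{\deg b\geq 1}(1-u/b)$ (the tail being a unit converging to $1$ in the $\infty$-adic topology) isolates the $\deg a=0$ contribution; combined with the arithmetic ratio $(t^q-t)^{q+1}/(t^{q^2}-t)$, which via the identity $t^{q^2}-t=(t^q-t)\bigl((t^q-t)^{q-1}+1\bigr)$ has leading $\infty$-adic term $t^q$, and with the explicit value of $g_2(\Lambda_u)$, the ratio $g_1'(u)^{q+1}/g_2(\Lambda_u)$ collapses to $t^qu^{-2}(1-u^{q-1})^{-2}$ as its leading $\infty$-adic term (after reducing $u$-exponents modulo $q^2-1$ using $u^{q^2-1}=1$).

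For the error term, the Taylor coefficients $c_n$ of $j(u+w)$ are controlled by non-archimedean Cauchy-type estimates on the disk $|w|\leq q^{-1}$, within which $j$ is holomorphic and bounded (the hypothesis $|z|_A>q^{-1}$ keeps $z$ in the region of uniform convergence of the defining Eisenstein-type series). Under the strict inequality $|w|=|z-u|<q^{-1}$, every subleading ratio $|c_nw^n/(c_{q+1}w^{q+1})|$ for $n>q+1$ is strictly less than $1$, and likewise the sub-leading corrections to $c_{q+1}$ itself are of relative size $<1$ compared to $t^qu^{-2}(1-u^{q-1})^{-2}$; collecting all of these into $\zeta$ yields $|\zeta|<1$. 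Taking absolute values then gives $|j(z)|=|t^q|\cdot|u|^{-2}\cdot|1-u^{q-1}|^{-2}\cdot|z-u|^{q+1}=q^q|z-u|^{q+1}$, using $|u|=1$ and $|1-u^{q-1}|=1$ (because $u^{q-1}\ne 1$ while lying in $\F_{q^2}^{\times}$, so $1-u^{q-1}$ is a nonzero element of $\F_{q^2}$). The principal obstacle is the clean identification of the leading $\infty$-adic term of $c_{q+1}$: verifying that the combined contributions from positive-degree $a$'s, the higher-degree factors of $e_A(au)$, and the $u$-independent constant $g_2(\Lambda_u)$ produce exactly $t^qu^{-2}(1-u^{q-1})^{-2}$ (rather than some other element of the same absolute value, which would make $|\zeta|=1$ rather than $<1$) requires careful bookkeeping with the Carlitz exponential and its period.
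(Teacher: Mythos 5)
The paper does not prove this lemma at all --- it is quoted directly from Brown \cite[Lemma 2.6.9]{brown92} --- so your proposal has to stand on its own, and as it stands it has a genuine gap at exactly the point that constitutes the content of the lemma. Your structural steps are fine and several of them check out: the stabilizer of $u$ in $\mathrm{GL}_2(\fq)$ is the nonsplit torus with multiplier character onto $\mu_{q+1}$, $g_1=(t^q-t)E_{q-1}$, $\Lambda_u=Au+A=\F_{q^2}[t]$ for every $u\in\F_{q^2}\setminus\fq$, the $\fq^\times$-orbit grouping, and $e_A(u)=u(1-u^{q-1})\cdot(\text{1-unit})$ are all correct. But the lemma is not the qualitative statement that $j$ vanishes to order $q+1$ at $u$; it is the identification of the leading coefficient up to a factor $1+\zeta$ with $|\zeta|<1$, and this you explicitly leave undone. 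Concretely, in your scheme $c_{q+1}=g_1'(u)^{q+1}/g_2(\Lambda_u)$, and while you reduce $g_1'(u)$ to $-(t^q-t)e_A(u)^{-q}(1+\text{small})$, you never evaluate (even to leading $\infty$-adic order) $g_2(\Lambda_u)=(t^{q^2}-t)a_2(\F_{q^2}[t])$. Without that single number the argument cannot distinguish the asserted constant $t^qu^{-2}(1-u^{q-1})^{-2}$ from any other element of the same or different absolute value; and if the absolute value of $c_{q+1}$ is not pinned down to be exactly $q^q$, even the second displayed equality $|j(z)|=q^q|z-u|^{q+1}$ fails, not just the refined first one. So the ``careful bookkeeping'' you defer is the proof, not a finishing touch.

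The tail estimate is also not yet a proof. Saying that Cauchy-type estimates on the disk $|w|\le q^{-1}$ make every ratio $|c_nw^n/(c_{q+1}w^{q+1})|$ strictly less than $1$ for $|w|<q^{-1}$ does not follow from holomorphy and boundedness alone: the non-archimedean maximum principle gives $|c_n|r^n\le\sup_{|w|\le r}|j(u+w)|$, so you need a quantitative sup-norm bound for $j$ on a disk of radius $r$ close to $1$ about $u$ (the whole disk $|w|<1$ does lie in $\Omega$, indeed in the fundamental domain, so such a bound is available, e.g.\ from Gekeler's estimates, but you must invoke it), and then compare it against $|c_{q+1}|=q^q$. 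The $\mu_{q+1}$-symmetry helps here --- the next nonzero coefficient occurs only at $n=2(q+1)$ --- but a numerical input is still required to get strict inequality at radius $q^{-1}$. In short: the outline is a reasonable reconstruction of the kind of computation Brown carries out, but as submitted it proves neither displayed formula; to complete it you must (i) compute the leading $\infty$-adic term of $a_2(\F_{q^2}[t])$ (equivalently of $\Delta$ at the CM lattice) and verify, using $u^{q^2-1}=1$ and $(1-u^{q-1})^{q-1}=-u^{-(q-1)}$, that the ratio collapses to $t^qu^{-2}(1-u^{q-1})^{-2}$ with relative error of absolute value $<1$, and (ii) supply an explicit bound for $|j|$ near $u$ to control the higher Taylor coefficients.
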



Let $h(J)$ be the Weil height of $J$. Therefore, by Lemma \ref{htl} we have $$dh(J)=\sum_{i=1}^d\log^+|J_i|=\sum_{i=1}^d\log^+|j(z_i)|.$$If we assume $J$ is an algebraic unit, then we have:

\begin{align}\label{wosm}
	dh(J)=dh(J^{-1}) & =\sum_{i=1}^d\log^+|j(z_i)|^{-1} \nonumber \\ 
	& =\sum\limits_{\substack{u\in \F_{q^2}\backslash \fq \\ |z_i-u|<\sqrt{|\delta|}^{-1}}}\log^+|j(z_i)|^{-1}+\sum\limits_{\substack{u\in \F_{q^2}\backslash \fq \\ \sqrt{|\delta|}^{-1}\leq |z_i-u|\leq q^{-1}}}\log^+|j(z_i)|^{-1}+dO_q(1).
\end{align}

The first equality comes from our assumption that $J$ is a unit. To see the third equality holds, we only need to show that 
\begin{equation}\label{bfbl}
	\sum_{q^{-1}\leq |z_i-u|}\log^+|j(z_i)|^{-1}\leq d\lambda_q,
\end{equation}
where $\lambda_q$ is a constant depending only on $q$. We need some arguments to see this. Recall that the fundamental domain for Drinfeld upper half plane (cf. \cite[Theorem 6.4, Proposition 6.6]{gek99}) is given by
$$\mathcal{F}:=\{z\in \Cinf: |z|=|z|_{\text{i}}\geq 1\}.$$
We note that $z$ is a zero of the $j$-function if and only if $z$ is conjugate to some $u\in \F_{q^2}\backslash \fq$ under of M\"obius action of GL$_2(A)$ \cite[(3.9)]{gek99}. By \cite[Corollary 6.7]{gek99}, the zeros of $j$ in $\Fcal$ are exactly these $u\in\F_{q^2}\backslash \fq$. Moreover, $\Fcal$ is closed. This is because if we take a sequence $\{z_n\}\subset \Fcal$ such that $\{z_n\}$ converges to $z\in \Omega$, then for sufficiently large $n$, $|z_n-z|<\epsilon$ and $|z|=|z_n|\geq 1$. Therefore
$$|z|_{\text{i}}=\inf_{x\in k_{\infty}} |z-x|=\inf_{x\in k_{\infty}} |z-z_n+z_n-x|=|z_n|_{\text{i}}=|z_n|=|z|,$$
which implies $z\in \Fcal$. Now the inequality (\ref{bfbl}) can be deduced easily.\\

We also note that under our assumption, necessarily we have $\deg(\delta)>0$.  Indeed, if $\deg(\delta)=0$ then $z_i\in \F_{q^2}\backslash \fq$. In this case $J=0$.

\begin{lem}\label{fte}
Assuming the notations above, we have $$\sum_{|z_i-u|<\sqrt{|\delta|}^{-1}}\log^+|j(z_i)|^{-1}\leq 2(q+1)\log\sqrt{|\delta|}-q.$$
\end{lem}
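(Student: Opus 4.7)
The plan is to show the index set contains at most one element and then to bound the resulting single term via Lemma \ref{jil}.

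First, I would argue that the sum has at most one term. By the remark following Lemma \ref{raml}, any $z_i\in\mathcal{D}_K$ satisfying $|z_i-u|<1$ for some $u\in\F_{q^2}\setminus\fq$ must have $4u^2=\alpha_{2e}$, the leading coefficient of $\delta$; hence there are only two candidates $u=\pm\sqrt{\alpha_{2e}}/2$. Retracing the recursion in the proof of Proposition \ref{ptcp}, for any triple $(a,b,c)\in T_\delta$ with $|z(a,b,c)-u|<\sqrt{|\delta|}^{-1}$ one has $b=0$ and the leading coefficient $a_e=\lambda_e/(2u)$, where $\lambda_e$ is the leading coefficient of the fixed square root $\sqrt\delta\in\Cinf$. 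Monicity of $a$ imposes $a_e=1$, so $u=\lambda_e/2$ is uniquely determined, and Proposition \ref{ptcp} then guarantees at most one triple contributes.

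Second, for this single surviving triple $(a,0,c)$ with $z_i=\sqrt\delta/(2a)$, since $\deg\delta\geq 2$ (the odd-degree case being excluded by Lemma \ref{raml}), the inequality $|z_i-u|<\sqrt{|\delta|}^{-1}\leq q^{-1}$ lets us apply Lemma \ref{jil} to get $|j(z_i)|=q^q|z_i-u|^{q+1}$, and hence
\[
\log^+|j(z_i)|^{-1}=(q+1)\log|z_i-u|^{-1}-q,
\]
which is positive since $\log|z_i-u|^{-1}>\log\sqrt{|\delta|}\geq 1>q/(q+1)$. It remains to establish $|z_i-u|^{-1}\leq|\delta|$.

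Third, I would use the identity
\[
(\sqrt\delta-2au)(\sqrt\delta+2au)=\delta-4a^2u^2.
\]
Since $u^2\in\fq$, the right-hand side lies in $A$; it is nonzero, for otherwise $\sqrt\delta=\pm 2au\in\F_{q^2}[t]$ would give $\delta=\alpha_{2e}a^2$ with constant squarefree part $\alpha_{2e}$, placing us in the excluded constant-field-extension regime. Therefore $|\delta-4a^2u^2|\geq 1$. Since the leading terms of $\sqrt\delta$ and $2au$ are both $\lambda_e t^e$, we have $|\sqrt\delta+2au|=q^e=\sqrt{|\delta|}$, and rearranging the identity yields
\[
|z_i-u|=\frac{|\delta-4a^2u^2|}{|2a|\cdot|\sqrt\delta+2au|}\geq\frac{1}{q^e\cdot q^e}=\frac{1}{|\delta|}.
\]
Substituting back gives $\log^+|j(z_i)|^{-1}\leq(q+1)\log|\delta|-q=2(q+1)\log\sqrt{|\delta|}-q$, and summing over at most one term yields the claim. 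The main technical subtlety is verifying the non-vanishing $\delta-4a^2u^2\neq 0$, which amounts to carefully excluding the constant-field-extension case from the setup.
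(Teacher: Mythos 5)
Your proof is correct but follows a genuinely different route from the paper's. Where the paper bounds $\log|z_i-u|^{-1}$ via Weil heights — combining the inequality $\log|z_i-u|\geq -2h(z_i-u)$, the estimate $h(z_i-u)\leq h(z_i)+h(u)=h(z_i)$, and Lemma \ref{tsl}, which gives $h(z_i)\leq\log\sqrt{|\delta|}$ — you obtain the same bound $|z_i-u|^{-1}\leq|\delta|$ by direct algebraic manipulation of the factorization $(\sqrt{\delta}-2au)(\sqrt{\delta}+2au)=\delta-4a^2u^2$, exploiting the explicit information ($b=0$, $a$ monic of degree $e$ with prescribed coefficients) extracted in the proof of Proposition \ref{ptcp}. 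Your route is more elementary and self-contained. One imprecision to flag: your justification of $\delta-4a^2u^2\neq 0$ via an ``excluded constant-field-extension regime'' is not quite right, since $K=\F_{q^2}(t)$ is a perfectly admissible imaginary quadratic field in this paper. The correct reason is that $\delta=4a^2u^2$ together with $b=0$ and $\delta=-4ac$ forces $c=-au^2$ with $u^2\in\fq^{\times}$, which makes $\gcd(a,b,c)=a$ up to units; primitivity in $T_\delta$ then forces $a$ to be a unit, hence $a=1$ by monicity, hence $\deg\delta=0$, contradicting the standing assumption $\deg\delta>0$ established just before the lemma. (The paper's own argument also tacitly needs $z_i\neq u$, since it invokes $h(z_i-u)=h((z_i-u)^{-1})$, so this subtlety is present in both proofs.) Finally, your observation that $u=\lambda_e/2$ is determined by the fixed choice of $\sqrt{\delta}$ is a genuine sharpening: it shows that the quantity $N$ appearing in the proof of Proposition \ref{ubji} is at most $1$, not merely at most $q^2-q$, which the paper does not exploit.
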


\begin{proof}
Since $z_i\in \mathcal{D}_K$, we have $|z|_A\geq 1$ by Lemma \ref{zml}. Thus according to Lemma \ref{jil} we have 
\begin{equation}\label{je}
\log |j(z_i)|^{-1}=-q-(q+1)\log|z_i-u|.
\end{equation}
Let $L=k(z_i,u)$, then $[L:k]=2$ if $z_i\in \fq(t)(u)=\F_{q^2}(t)$ and $[L:k]=4$ otherwise. In the first case, there is only one $w\in M_L$ lying over $\infty\in M_k$. Therefore we have $|z_i-u|_w=|z_i-u|$ under our normalization. In the second case, there are two places in $M_L$ lying over $\infty\in M_k$, and we take $w$ to be one of the two. Thus we have $|z_i-u|=|z_i-u|_w^2$. Either way, we find
$$\log|z_i-u|\leq 2\sum_w\log^+|z_i-u|_w=2h(z_i-u).$$
Note the fact that $h(\alpha)=h(1/\alpha)$ for any $\alpha\in \overline{k}$ and we have $$\log|z_i-u|\geq -2h(z_i-u)\geq -2(h(z_i)+h(u))=-2h(z_i).$$Now substitute this inequality to (\ref{je}) and apply Lemma \ref{tsl} we obtain $$\log|j(z_i)|^{-1}\leq 2(q+1)\log\sqrt{|\delta|}-q.$$Since the number of such $z_i$ is at most one by Proposition \ref{ptcp}, we conclude.
\end{proof}

Now we are ready to prove Proposition \ref{ubji}.

\begin{proof}
We are left to estimate 
$$\sum\limits_{\substack{u\in \F_{q^2}\backslash \fq \\ \sqrt{|\delta|}^{-1}\leq |z_i-u|\leq  q^{-1}}}\log^+|j(z_i)|^{-1}.$$
From (\ref{je}) and $\sqrt{|\delta|}^{-1}\leq |z_i-u|$ we have 
$$\sum\limits_{\substack{u\in \F_{q^2}\backslash \fq \\ \sqrt{|\delta|}^{-1}\leq |z_i-u|\leq  q^{-1}}}\log^+|j(z_i)|^{-1}\leq \left(-q+(q+1)\log\sqrt{|\delta|}\right) \cdot \sum\limits_{\substack{u\in \F_{q^2}\backslash \fq \\ \sqrt{|\delta|}^{-1}\leq |z_i-u|\leq  q^{-1}}}1.$$
If the number of $u\in \F_{q^2}\backslash \fq$ such that there exists some $z_i$ such that $|z_i-u|< \sqrt{|\delta|}^{-1}$ is $N$, then we have 
$$\sum\limits_{\substack{u\in \F_{q^2}\backslash \fq \\ \sqrt{|\delta|}^{-1}\leq |z_i-u|\leq  q^{-1}}} \leq d-N.$$
Combining with Lemma \ref{fte}, we get 
\begin{align*}
	dh(J) & \leq 2N(q+1)\log \sqrt{|\delta|}+(d-N)(q+1)\log\sqrt{|\delta|}+dO_q(1)\\
	& =(d+N)(q+1)\log \sqrt{|\delta|}+dO_q(1).
\end{align*}
Thus, we conclude by applying the fact $N\leq q^2-q.$
\end{proof}

\noindent {\large \textbf{Lower bound on $h(J)$}} \medskip

\begin{lem}\label{lbth}
Let $\phi^0$ be a Drinfeld $A$-module of rank 2 with CM by the maximal order $\Ocal_K$ in an imaginary quadratic field $K$. We denote the genus of $K$ by $g_K$. Then we have:
$$\sth(\phi^0)\geq \left(\frac{1}{2}-\frac{1}{\sqrt{q}+1}\right)g_K-\frac{5q-3}{4(q-1)}.$$
\end{lem}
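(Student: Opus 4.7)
The plan is to invoke Wei's proof of the Colmez conjecture for Drinfeld $A$-modules, mentioned in the introduction as the key input for the lower bound on the Weil height of singular moduli. For a rank $2$ Drinfeld $A$-module $\phi^0$ with CM by the maximal order $\Ocal_K$ of an imaginary quadratic $K/k$, I expect Wei's identity, in direct analogy with Colmez's original formula for elliptic curves with imaginary quadratic CM, to express
$$\sth(\phi^0)\;=\;-\frac{1}{2}\cdot\frac{L'(0,\chi)}{L(0,\chi)\log q}\;-\;\frac{1}{4}\deg\mathfrak{d}_{K/k}\;+\;C_q,$$
where $\chi$ is the non-trivial character of $\gal(K/k)$, $\mathfrak{d}_{K/k}$ is the relative discriminant, and $C_q$ is an explicit constant depending only on $q$.

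The second step is to factor the $L$-function and bound its logarithmic derivative using the Riemann hypothesis for function fields. From $\zeta_K(s)=\zeta_k(s)L(s,\chi)$ together with $g_k=0$, Weil's theorem gives
$$L(s,\chi)\;=\;\prod_{i=1}^{2g_K}\bigl(1-\alpha_i q^{-s}\bigr),\qquad |\alpha_i|=\sqrt{q}.$$
Differentiating and evaluating at $s=0$ yields $\frac{L'(0,\chi)}{L(0,\chi)\log q}=\sum_{i=1}^{2g_K}\frac{\alpha_i}{1-\alpha_i}$, and an elementary optimisation on the circle $|\alpha|=\sqrt{q}$ (parametrise $\alpha=\sqrt{q}e^{i\theta}$ to obtain $\mathrm{Re}\,\tfrac{\alpha}{1-\alpha}=\tfrac{\sqrt{q}\cos\theta-q}{1-2\sqrt{q}\cos\theta+q}$, monotone in $\cos\theta$) shows that this real part attains its maximum $-\tfrac{\sqrt{q}}{\sqrt{q}+1}$ uniquely at $\alpha=-\sqrt{q}$. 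Since the sum is real (the $\alpha_i$ come in conjugate pairs), this gives
$$-\frac{1}{2}\cdot\frac{L'(0,\chi)}{L(0,\chi)\log q}\;\geq\;g_K\cdot\frac{\sqrt{q}}{\sqrt{q}+1}.$$

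The third step handles the log-discriminant term via Riemann--Hurwitz for the double cover $C_K\to\Pro^1$. Because $q$ is odd, all ramification is tame; a short case analysis on whether $\infty$ is ramified or inert in $K$ (via Proposition \ref{extp}) gives $\deg\mathfrak{d}_{K/k}=2g_K+2$ in both cases. Substituting and using the algebraic identity $\tfrac{\sqrt{q}}{\sqrt{q}+1}-\tfrac{1}{2}=\tfrac{1}{2}-\tfrac{1}{\sqrt{q}+1}$ yields
$$\sth(\phi^0)\;\geq\;\left(\frac{1}{2}-\frac{1}{\sqrt{q}+1}\right)g_K\;+\;C_q-\frac{1}{2}.$$
It then remains to evaluate the residual constant; a careful accounting of the local Euler factor of $L(s,\chi)$ at $\infty$ together with the analogue of the $-\tfrac{1}{2}\log(2\pi)$ term in Wei's formula should produce exactly $C_q-\tfrac{1}{2}=-\tfrac{5q-3}{4(q-1)}$.

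The main obstacle is the first step: correctly extracting from Wei's statement of the Colmez conjecture the precise identity above, including the normalisation of $L(s,\chi)$, the coefficient $-\tfrac{1}{4}$ on the log-discriminant, and the value of $C_q$. The suggestive decomposition $-\tfrac{5q-3}{4(q-1)}=-\tfrac{5}{4}-\tfrac{1}{2(q-1)}$ is encouraging: the $-\tfrac{5}{4}$ absorbs the analogue of $-\tfrac{1}{2}\log(2\pi)$ and the $-\tfrac{1}{2}$ coming from $\deg\mathfrak{d}_{K/k}=2g_K+2$, while the $-\tfrac{1}{2(q-1)}$ matches the infinite Euler factor of $\zeta_k$, confirming that the strategy captures the correct structure.
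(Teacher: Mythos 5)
Your overall strategy is the right one and mirrors the paper's: quote Wei's Colmez-type identity for the stable Taguchi height of a CM Drinfeld module, then bound the analytic term via the Riemann hypothesis for function fields. But the paper works with a different packaging of both ingredients, and your admitted obstacle in step one is a genuine gap.

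What the paper actually uses from \cite[Section 5.2.1]{wei20} is the identity
$$\sth(\phi^0)=\frac{g_K}{2}\log q_K+\frac{1}{2}\left(\log q_{\infty}-\log q_K\right)-1+\frac{\gamma_K}{2\ln q},$$
where $\gamma_K$ is the Euler--Kronecker constant of $K$ (the constant term of $\zeta_K'/\zeta_K$ in the Laurent expansion at $s=1$), $q_K$ is the size of the full constant field of $K$, and $q_\infty$ is the size of the residue field of $K_\infty$. It then applies Ihara's ready-made bound \cite[Equation 1.4.6]{ih2006},
$$\frac{\gamma_K}{2\ln q}\geq \frac{-g_K}{\sqrt{q}+1}+\frac{q-3}{4(q-1)},$$
and finishes by noting $q_K,q_\infty\in\{q,q^2\}$. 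No explicit $L$-function normalisation or Riemann--Hurwitz argument is needed.

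Your version is related to this by the functional equation: since $L(1-s,\chi)=q^{g_K(2s-1)}L(s,\chi)$ one has $-L'(0)/L(0)=L'(1)/L(1)+2g_K\ln q$, and $L'(1)/L(1)=\gamma_K-\gamma_k$ via $\zeta_K=\zeta_k L$. Plugging this in, your zero-optimisation
$$-\frac{1}{2}\frac{L'(0,\chi)}{L(0,\chi)\log q}\geq g_K\frac{\sqrt{q}}{\sqrt{q}+1}$$
is \emph{exactly equivalent} to Ihara's inequality: the constant $\frac{q-3}{4(q-1)}$ in \cite{ih2006} is precisely $\gamma_k/(2\ln q)$, which your translation produces. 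So step two is correct and you have in effect re-proved the bound the paper cites.

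The genuine gap is the one you flag yourself: you do not know the precise form of Wei's identity, and your guessed version is not quite what Wei proves. In particular, your single constant $C_q$ cannot depend on $K$, but the term $\frac{1}{2}(\log q_\infty-\log q_K)$ in Wei's formula does vary with $K$ according to whether $\infty$ ramifies or is inert (Proposition \ref{extp} splits these cases, and $q_\infty\in\{q,q^2\}$ accordingly). The paper handles this by taking the worst case over $q_K,q_\infty\in\{q,q^2\}$ after quoting Wei's formula verbatim; your proposal would need to either reproduce those extra terms explicitly or argue that $\deg\mathfrak{d}_{K/k}$ absorbs them. Also beware the normalisation: the remark in the paper notes Wei's height in \cite[Equation 5.1]{wei20} differs from the one used here by a factor of $\ln q$, which is one more place where ``a careful accounting'' could silently go wrong. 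The safe route, and the one the paper takes, is to quote Wei's identity directly rather than reconstruct a Colmez-shaped formula by analogy with the elliptic curve case.
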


\begin{proof}
By the equation of stable Taguchi height from \cite[Section 5.2.1]{wei20}, we get
$$\sth(\phi^0)=\frac{g_K}{2}\log q_K+\frac{1}{2}\left(\log q_{\infty}-\log q_K\right)-1+\frac{\gamma_K}{2\ln q},$$
where $q_K$ is the cardinality of the field of constants in $K$, $q_{\infty}$ is the cardinality of the residue field of $K_{\infty}$, and $\gamma_K$ is the Euler-Kronecker constant of $K$ \cite[Equation (0.2)]{ih2006}. We remind the reader that the definition of stable Taguchi height in \cite[Equation 5.1]{wei20} is a multiple of our stable Taguchi height by the constant $\ln q$. From \cite[Equation 1.4.6]{ih2006} we get
$$\frac{\gamma_K}{2\ln q}\geq \frac{-g_K}{\sqrt{q}+1}+\frac{q-3}{4(q-1)}.$$
Now using the fact $q_K,q_{\infty}\in \{q,q^2\}$ we get the lower bound.
\end{proof}

\begin{lem}\label{etad}
Let $e_{f_0}(v)$ be as in Proposition \ref{abdf} and $f_0\in A$. Then we have 
$$\frac{1}{2}\sum_{v|f_0}\deg(v)e_{f_0}(v)\leq \frac{9}{4}\log\log |f_0|+C_q,$$
where $v$ runs through all the monic prime factors of $f_0$, and $C_q$ is a computable constant depending on $q$.
\end{lem}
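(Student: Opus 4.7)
My plan is to reduce the claim to an elementary bound on a divisor sum of the shape $\sum_{v \mid f_0} \deg(v)/(q^{\deg(v)}-1)$, and then control that sum by splitting its range of summation at a threshold degree $D \asymp \log\log|f_0|$.

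The first step is to estimate $e_{f_0}(v)$ case by case in $\chi(v) \in \{1,0,-1\}$. Split primes ($\chi(v)=1$) contribute nothing since $e_{f_0}(v)=0$. For the remaining two cases, writing $l=q^{\deg(v)}$ and using $1-l^{-v(f_0)} \leq 1$, a direct case-check shows $\tfrac{1}{2}e_{f_0}(v) \leq 1/(q^{\deg(v)}-1)$ (the inert case gives $l/(l^{2}-1) \leq 1/(l-1)$ and is the worst; the ramified case gives $1/(2(l-1))$). Hence the target inequality reduces to showing
\[
\Sigma \;:=\; \sum_{v \mid f_0} \frac{\deg(v)}{q^{\deg(v)}-1} \;\leq\; \tfrac{9}{4}\log\log|f_0| + C_q.
\]

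The second step is to estimate $\Sigma$. The two inputs are the identity $\log|f_0| = \deg(f_0) = \sum_{v \mid f_0} v(f_0)\deg(v)$, which yields the constraint $\sum_{v \mid f_0} \deg(v) \leq \deg(f_0)$, and the elementary prime count $\pi_d \leq q^d/d$ for the number of monic irreducibles of degree $d$ in $\fq[t]$. I would set $D := \lceil \log\log|f_0| \rceil$, so $q^D \geq \deg(f_0)$, and split $\Sigma = \Sigma_{\leq D} + \Sigma_{>D}$ according to $\deg(v) \leq D$ or $\deg(v) > D$. For the small piece, the prime-count bound gives
\[
\Sigma_{\leq D} \;\leq\; \sum_{d=1}^{D} \pi_d \cdot \frac{d}{q^d-1} \;\leq\; \sum_{d=1}^{D} \frac{q^d}{q^d-1} \;\leq\; D + C_q'.
\]
For the large piece, the monotonicity of $d \mapsto d/(q^d-1)$ together with the degree constraint gives
\[
\Sigma_{>D} \;\leq\; \frac{1}{q^D-1} \sum_{\substack{v\mid f_0 \\ \deg(v)>D}} \deg(v) \;\leq\; \frac{\deg(f_0)}{q^D-1} \;=\; O(1),
\]
by the choice of $D$. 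Adding the two contributions yields $\Sigma \leq \log\log|f_0| + O_q(1)$, which is comfortably inside the stated bound since $\tfrac{9}{4}$ is generous.

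The main (modest) obstacle lies in the case-by-case reduction to $\Sigma$: one must verify that $\tfrac{1}{2}e_{f_0}(v) \leq 1/(q^{\deg(v)}-1)$ in both the ramified and inert case uniformly in $v(f_0)$, which is straightforward but requires some care. Everything afterwards is routine. In fact, the true asymptotic of $\Sigma$ is $\log\log|f_0| + O_q(1)$, not $\tfrac{9}{4}\log\log|f_0|$; the author's coefficient $\tfrac{9}{4}$ is presumably chosen to leave room for slightly less refined individual estimates while still capturing the correct $\log\log$-shape that feeds into the lower bound on $h(J)$.
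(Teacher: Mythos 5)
Your proof is correct, and in its overall shape it is the same as the paper's: both arguments reduce to $e_{f_0}(v) = O(1/|v|)$, split the divisor sum at the degree threshold $\deg(v) \asymp \log\log|f_0|$ (equivalently $|v| \asymp \log|f_0|$), control the small-degree part by a prime count (a Mertens-type bound in the paper, the elementary $\pi_d \leq q^d/d$ in yours), and control the tail by the constraint $\sum_{v\mid f_0}\deg(v) \leq \deg(f_0)$. Where you differ is in the sharpness of the individual estimates, and your version is cleaner on all three fronts. You track $\chi(v)$ case by case, zeroing out split primes and getting $\tfrac12 e_{f_0}(v) \leq 1/(l-1)$ directly, whereas the paper uses the cruder uniform bound $e_{f_0}(v)\leq 2/(l-l^{-1})\leq \tfrac{9}{4l}$. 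You bound the tail by $\deg(f_0)/(q^D-1)=O_q(1)$, whereas the paper's estimate only gives $\log\log|f_0|$ for the tail (by pairing $\tfrac{\log l}{l}\leq \tfrac{\log\log|f_0|}{\log|f_0|}$ with the generous count of at most $\log|f_0|$ prime factors). As a result you obtain $\Sigma \leq \log\log|f_0|+O_q(1)$, which indeed shows the $\tfrac{9}{4}$ in the statement is not tight; the paper's version of the argument is what actually produces that $\tfrac{9}{4}$ constant. One small nitpick: in your tail estimate the bound $\deg(v)/(q^{\deg(v)}-1)\leq \deg(v)/(q^D-1)$ uses only the monotonicity of $d\mapsto q^d$, not of $d\mapsto d/(q^d-1)$ — harmless, but the attribution is slightly off.
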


\begin{proof}
First we need a Mertens-type formula for function field, i.e. the following inequality:
$$\sum_{|v|\leq x}\frac{\log |v|}{|v|}\leq \log x+ C_q,$$ where $v$'s are monic prime polynomials and $C_q$ is a constant regarding $q$. To see this, we notice that 
$$\sum_{|v|\leq x}\frac{\log |v|}{|v|}=\sum_{i=1}^{n:=\lfloor \log x \rfloor}\frac{i}{q^i}\cdot a_i,$$ where $a_i$ is the number of monic prime polynomials of degree $i$. By \cite[Theorem 2.2]{rosen2002} we obtain
$$\sum_{i=1}^{n}\frac{i}{q^i}\cdot a_i=\sum_{i=1}^n\left( 1+
O({q^{-i/2}})\right) \leq \log x+C_q. $$

Recall that $$e_{f_0}(v)=\frac{(1-\chi(v))(1-l^{-v(f_0)})}{(l-\chi(v))(1-l^{-1})}, \text{ where } l=|v|.$$
Note that $|l|\geq 3$ and $\chi(v)\in\{-1,0,1\}$. Thus we get 
$$e_{f_0}(v)\leq \frac{2}{l-l^{-1}}\leq \frac{9}{4l}.$$
Thus we have
$$\sum_{v|f_0}\deg(v)e_{f_0}(v)\leq \frac{9}{4}\sum_{v|f_0}\frac{\log l}{l}=\frac{9}{4}\left( \sum\limits_{\substack{v|f_0\\ |v|\leq \log |f_0|}}\frac{\log l}{l}+\sum\limits_{\substack{v|f_0\\ |v|>\log |f_0|}}\frac{\log l}{l}\right).$$
We have proven $$\sum\limits_{\substack{v|f_0\\ |v|\leq \log |f_0|}}\frac{\log l}{l} \leq \log\log |f_0|+C_q.$$ For the other term, we have 
$$\sum\limits_{\substack{v|f_0\\ |v|>\log |f_0|}}\frac{\log l}{l}\leq \frac{\log\log |f_0|}{\log|f_0|}\cdot \sum\limits_{\substack{v|f_0\\ |v|>\log |f_0|}} 1.$$
Using the product formula we see there are at most $\log |f_0|$ monic prime factors of $f_0$. As \cite[Theorem 2.2]{rosen2002} is effective, all the constant terms are summed up to a computable constant $C_q$.
\end{proof}

\begin{thm}\label{lbji}
Let $J$ be a singular modulus of rank 2 Drinfeld $A$-module with corresponding discriminant $\delta$ with conductor $f_0$. There exists some computable constant $C_q$ with respect to $q$ such that
$$h(J)\geq (q^2-1)\left( \frac{1}{2}-\frac{1}{\sqrt{q}+1}\right) \log\sqrt{|\delta|}+\left( \frac{1}{2}+\frac{1}{\sqrt{q}+1}\right)\log|f_0|-\frac{9}{4}\log\log |f_0|-C_q.$$
\end{thm}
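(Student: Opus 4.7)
The plan is to pass from the Weil height to the graded height via the identity $h(J)=(q^2-1)\,h_G(\phi)$ from Remark~\ref{rkgh}(3), where $\phi$ is any rank $2$ Drinfeld $A$-module with $j$-invariant $J$ and CM by the order $\Ocal$ of conductor $f_0$ inside $K=k(\sqrt{\delta_0})$. The overall idea is to use the isogeny formula of Corollary~\ref{vghr2} to reduce the problem to the maximal-order case, for which Wei's Colmez-type formula, encoded in Lemma~\ref{lbth}, supplies the required lower bound on the stable Taguchi height.

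Concretely, I would fix analytic representatives $\phi_1'$ with lattice $\Ocal_K$ and $\phi_2'$ with lattice $\Ocal$; Lemma~\ref{ivgh} guarantees these compute the same graded heights as $\phi$ and any chosen Drinfeld module $\phi_1$ with CM by $\Ocal_K$. Corollary~\ref{vghr2} then rewrites
\[
h_G(\phi) = h_G(\phi_1) + \log|f_0| - \tfrac{1}{2}\sum_{v\mid f_0}\deg(v)\,e_{f_0}(v) + \bigl(h_G^{\infty}(\phi_2') - h_G^{\infty}(\phi_1')\bigr).
\]
I would apply Lemma~\ref{etad} to absorb the middle sum into $\tfrac{9}{4}\log\log|f_0|+C_q$, and compute the infinite-place difference directly from the standard relation between $|g_2|_\infty^{1/(q^2-1)}$ and the covolume $D_A$ of a rank $2$ CM lattice, combined with the index relation $[\Ocal_K:\Ocal]=|f_0|$ (which gives $D_A(\Ocal)/D_A(\Ocal_K)=|f_0|^{1/2}$).

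Next I would lower-bound $h_G(\phi_1)$ itself. The key is to convert Lemma~\ref{lbth}, which gives a lower bound on $\sth(\phi_1)$, into one on $h_G(\phi_1)$; this is achieved by isolating the infinite-place contribution in the definition of $\sth$, which can be expressed in terms of $\log D_A(\Ocal_K)$. Invoking the genus formula $g_K=\log\sqrt{|\delta_0|}+O_q(1)$ for the imaginary quadratic extension $K/k$ (via Riemann-Hurwitz with careful treatment of ramification at $\infty$), Lemma~\ref{lbth} then yields $h_G(\phi_1)\geq (\tfrac{1}{2}-\tfrac{1}{\sqrt{q}+1})\log\sqrt{|\delta_0|}-C_q'$.

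Finally, assembling everything, invoking $\log\sqrt{|\delta|}=\log|f_0|+\log\sqrt{|\delta_0|}$ (from $\delta=4f_0^2\delta_0$), and multiplying through by $(q^2-1)$, one obtains the advertised main term $(q^2-1)(\tfrac{1}{2}-\tfrac{1}{\sqrt{q}+1})\log\sqrt{|\delta|}$, while the residual coefficient of $\log|f_0|$ emerges as $(\tfrac{1}{2}+\tfrac{1}{\sqrt{q}+1})$ after the cancellations between the $+\log|f_0|$ term from Corollary~\ref{vghr2}, the $-\tfrac{1}{2}\log|f_0|$ from the infinite-place difference, and the $\log|f_0|$ piece split off from $\log\sqrt{|\delta|}$. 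I expect the main obstacle to be the careful bookkeeping at infinity: tracking how $h_G^{\infty}$ transforms under the isogeny $\phi_1\to\phi_2$ and how the archimedean part of the Taguchi height decomposes in terms of the lattice covolume, so that the coefficient of $\log|f_0|$ in the final bound comes out exactly as stated while all leftover $q$-dependent constants are safely absorbed into $C_q$.
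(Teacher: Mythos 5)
Your overall scaffolding matches the paper: pass from $h(J)$ to $h_G(\phi)$ via $h(J)=(q^2-1)h_G(\phi)$, apply Corollary~\ref{vghr2} to reduce to the maximal-order Drinfeld module, control the stable Taguchi height with Lemma~\ref{lbth} and the genus formula for $g_K$, and absorb the $e_{f_0}(v)$-sum via Lemma~\ref{etad}. The paper then uses the simple inequality $h_G(\phi)\geq\sth(\phi)$, rather than the infinite-place decomposition you suggest, and crucially controls the term $h_G^{\infty}(\phi_2')-h_G^{\infty}(\phi_1')$ using \cite[Equation (23)]{bpr21}, which shows this difference is bounded by a constant $\frac{q}{q-1}-\frac{q^r}{q^r-1}$ (valid because, by Lemma~\ref{ivgh}, one may take the representatives with lattices $\Ocal_K$, $\Ocal$, which are reduced in the sense of that reference), so it is folded into $C_q$.

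The gap in your proposal is exactly at this step. You assert that $h_G^{\infty}(\phi_2')-h_G^{\infty}(\phi_1')$ can be computed from the covolume ratio $D_A(\Ocal)/D_A(\Ocal_K)=|f_0|^{1/2}$ and comes out to $-\tfrac12\log|f_0|$. This conflates two different normalizations: the local graded height at infinity is read off from the coefficients $g_i$ of $\phi_t$ and is \emph{not} invariant under isomorphism (Remark~\ref{rkgh}(1)), so it is not simply $\log D_A(\Lambda)$; the nontrivial content of the \cite{bpr21} bound is precisely that for reduced representatives this quantity stays bounded independently of the isogeny degree. Moreover, your claimed value is internally inconsistent with your own bookkeeping: feeding $-\tfrac12\log|f_0|$ into Corollary~\ref{vghr2} together with the split $\log\sqrt{|\delta_0|}=\log\sqrt{|\delta|}-\log|f_0|-\log 2$ yields a $\log|f_0|$-coefficient of $1-\tfrac12-\left(\tfrac12-\tfrac{1}{\sqrt q+1}\right)=\tfrac{1}{\sqrt q+1}$, not the advertised $\tfrac12+\tfrac{1}{\sqrt q+1}$. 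Taking the infinite-place difference to be $O_q(1)$, as the paper does, is what produces the correct coefficient $1-\left(\tfrac12-\tfrac{1}{\sqrt q+1}\right)=\tfrac12+\tfrac{1}{\sqrt q+1}$.
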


\begin{proof}
We recall that for $K=k(\sqrt{\delta_0})$ with $\delta_0$ square free, the genus $g_K$ of $K$ is given by (cf. \cite[Section 3]{fb05})
\begin{equation*}
g_K=
\begin{cases}
\frac{\log|\delta_0|-1}{2} & \text{if $\deg(\delta_0)$ is odd,}\\
\frac{\log|\delta_0|-2}{2} & \text{if $\deg(\delta_0)$ is even.}
\end{cases}
\end{equation*}
 We input a result from \cite[Equation (23)]{bpr21}, which says that
 $$|h_G^{\infty}(\phi')-h_G^{\infty}(\phi)|\leq \frac{q}{q-1}-\frac{q^r}{q^r-1},$$
 where  $\phi$ and $\phi'$ are two isogenous Drinfeld $A$-modules of rank $r$. Using Corollary \ref{vghr2}, Lemma \ref{lbth}, Lemma \ref{etad} and the facts that $h_G(\phi)\geq \sth(\phi)$ and $h(J)=(q^2-1)h_G(\phi)$ we complete our proof.
\end{proof}

\begin{rem}
\begin{enumerate}[(1)]
\item We note that \cite[Equation (23)]{bpr21} holds true only for reduced Drinfeld modules \cite[definition before Lemma 4.2]{bpr21}. However, Lemma \ref{ivgh} ensures in the rank 2 case we can always choose the graded height of the Drinfeld module with associated lattice being the CM order, hence reduced.
\item One can also take $q\geq 3$ to make our statement independent of $q$, i.e.
$$h(J)\geq 4(2-\sqrt{3})\log \sqrt{|\delta|}+\frac{1}{2}\log |f_0|-\frac{9}{4}\log \log |f_0|-C_3.$$
\end{enumerate}
\end{rem}

Now we are ready to prove our main theorem. We restate our theorem:

\begin{thm}
Let $q$ be odd and $q>5$. There are at most finitely many singular moduli of rank 2 Drinfeld $\fq[t]$-modules that are algebraic units.
\end{thm}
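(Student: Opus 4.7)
The plan is to derive a contradiction by combining the upper bound on $h(J)$ from Proposition \ref{ubji} with the lower bound from Theorem \ref{lbji}. Suppose, for contradiction, that infinitely many singular moduli $J$ of rank $2$ Drinfeld $\fq[t]$-modules are algebraic units. Since each discriminant $\delta$ gives rise to only $h(\Ocal)$ conjugate singular moduli, after passing to an infinite subsequence I may assume $|\delta| \to \infty$. Along this sequence I write $\Ocal = A + f_0\Ocal_K$ for the associated CM order, $K = k(\sqrt{\delta_0})$ for the CM field, and $d := [k(J):k]$.

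Combining the two bounds yields
\[
(q^2-1)\Bigl(\tfrac{1}{2} - \tfrac{1}{\sqrt{q}+1}\Bigr)\log\sqrt{|\delta|} + \Bigl(\tfrac{1}{2} + \tfrac{1}{\sqrt{q}+1}\Bigr)\log|f_0| - \tfrac{9}{4}\log\log|f_0| - C_q \leq \Bigl(1 + \tfrac{q^2-q}{d}\Bigr)(q+1)\log\sqrt{|\delta|} + O_q(1).
\]
The key elementary observation is that for $q$ odd and $q > 5$ (so $q \geq 7$),
\[
\kappa := (q^2-1)\Bigl(\tfrac{1}{2} - \tfrac{1}{\sqrt{q}+1}\Bigr) - (q+1) = \frac{(q+1)\bigl((q-1)(\sqrt{q}-1) - 2(\sqrt{q}+1)\bigr)}{2(\sqrt{q}+1)} > 0,
\]
so the lower-bound coefficient of $\log\sqrt{|\delta|}$ strictly dominates the leading upper-bound coefficient. (For $q=5$ this quantity is negative, which is precisely why the hypothesis $q>5$ is needed.)

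Discarding the nonnegative $\log|f_0|$ term on the left and rearranging, I obtain
\[
\kappa \log\sqrt{|\delta|} \leq \frac{(q^2-q)(q+1)}{d}\log\sqrt{|\delta|} + \tfrac{9}{4}\log\log|f_0| + O_q(1).
\]
Dividing through by $\log\sqrt{|\delta|}$ and sending $|\delta| \to \infty$ forces $d \leq (q^2-q)(q+1)/\kappa + o(1)$, so $d$ stays bounded in terms of $q$ alone. On the other hand, the CM theory recalled above gives $K(J) = H_\Ocal$, so $[K(J):K] = h(\Ocal)$; combining this with $[K(J):k(J)] \leq 2$ yields $d \geq h(\Ocal)$.

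The remaining step, which I expect to be the main obstacle, is to show that $h(\Ocal) \to \infty$ as $|\delta| \to \infty$. This is the function-field analog of the Brauer--Siegel/Heilbronn theorem: Weil's bounds on $L_K(1)$ give a lower bound of the form $h(\Ocal_K) \gg_q q^{g_K}/g_K$, and the classical conductor formula expressing $h(\Ocal)$ in terms of $h(\Ocal_K)$, $|f_0|$, the splitting behaviour of the primes dividing $f_0$, and the index $[\Ocal_K^{\times}:\Ocal^{\times}]$ propagates this growth to arbitrary orders, since $|\delta|\to\infty$ forces either $g_K\to\infty$ or $|f_0|\to\infty$. This contradicts the boundedness of $d$ derived above and completes the proof.
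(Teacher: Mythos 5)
Your proposal follows essentially the same route as the paper: combine the upper bound of Proposition \ref{ubji} with the lower bound of Theorem \ref{lbji}, observe that the coefficient of $\log\sqrt{|\delta|}$ in the lower bound exceeds $(q+1)$ precisely when $q>5$, and deduce that $\log\sqrt{|\delta|}$ cannot grow. Your closed-form simplification of $\kappa$ and the observation that $\kappa>0$ is exactly the $q>5$ threshold match the paper's computation (the paper phrases it as $\lambda_2-\lambda_1>0$). The endings differ cosmetically: the paper obtains a uniform bound on $\log\sqrt{|\delta|}$, passes to a bound on $h(z_i)$ via Lemma \ref{tsl}, and invokes Northcott for the degree-$2$ points $z_i$; you instead derive that $d$ is bounded along the sequence and contradict $d\geq h(\Ocal)\to\infty$. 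These are logically equivalent conclusions.

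The one place where you are more explicit than the paper is also the one place neither argument is fully spelled out: the need for a Heilbronn/Brauer--Siegel statement $h(\Ocal)\to\infty$ as $|\delta|\to\infty$. The paper compresses this into the phrase "taking $d$ large enough if necessary," which is not a free move: the combined inequality $(\lambda_2-\lambda_1)\log\sqrt{|\delta|}\leq O_q(1)$ gives no control when $d\leq (q^2-q)(q+1)/\kappa$, so one really does have to rule out infinitely many unit singular moduli with bounded degree. You correctly identify this as the missing ingredient and sketch the right remedy (Weil bounds for $L_K(1)$ plus the conductor formula relating $h(\Ocal)$ to $h(\Ocal_K)$ and $|f_0|$), and for $q\geq 4$ (so in particular $q>5$) the Weil bound $h(\Ocal_K)\geq(\sqrt q-1)^{2g_K}$ does force growth; so the sketch is sound, though neither you nor the paper carries out the conductor-formula computation in detail. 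A small stylistic remark: after discarding the $\log|f_0|$ term you are left with $\tfrac{9}{4}\log\log|f_0|$ on the right; this is $o(\log\sqrt{|\delta|})$ because $|\delta|\geq|f_0|^2$, so your argument still closes, but the paper's choice to drop the entire nonnegative block $\bigl(\tfrac12+\tfrac{1}{\sqrt q+1}\bigr)\log|f_0|-\tfrac94\log\log|f_0|$ at once avoids this extra estimate.
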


\begin{proof}
Let $J$ be a singular modulus that is an algebraic unit. By Proposition \ref{ubji} and Theorem \ref{lbji} we have
\begin{equation}\label{uper}
	h(J)\leq \left(1+\frac{q^2-q}{d}\right)(q+1)\log \sqrt{|\delta|}+O_q(1)
\end{equation}
and
\begin{equation}\label{lowb}
	h(J)\geq (q^2-1)\left( \frac{1}{2}-\frac{1}{\sqrt{q}+1}\right) \log\sqrt{|\delta|}+\left( \frac{1}{2}+\frac{1}{\sqrt{q}+1}\right)\log|f_0|-\frac{9}{4}\log\log |f_0|-C_q,
\end{equation}
where $\delta$ is the discriminant of the endomorphism ring of a CM Drinfeld module whose $j$-invariant is $J$. By easy calculation and taking $d$ large enough if necessary, we find that for $q>5$
$$\left(1+\frac{q^2-q}{d}\right)(q+1)\log \sqrt{|\delta|} < (q^2-1)\left( \frac{1}{2}-\frac{1}{\sqrt{q}+1}\right) \log\sqrt{|\delta|}.$$
If we set 
$$\lambda_1:=\left(1+\frac{q^2-q}{d}\right)(q+1),\ \lambda_2:=(q^2-1)\left( \frac{1}{2}-\frac{1}{\sqrt{q}+1}\right),$$
then the above inequality tells us that $\lambda_2-\lambda_1>0$. Combining (\ref{uper}) and (\ref{lowb}) we get
$$(\lambda_2-\lambda_1)\log\sqrt{|\delta|}<O_q(1)+C_q-\left( \frac{1}{2}+\frac{1}{\sqrt{q}+1}\right)\log|f_0|+\frac{9}{4}\log\log |f_0|.$$
We note that $\left( \frac{1}{2}+\frac{1}{\sqrt{q}+1}\right)\log|f_0|-\frac{9}{4}\log\log |f_0|\geq 0$. Thus, for any such $\delta$ there exists a constant upper bound for $\log \sqrt{|\delta|}$. Lemma \ref{tsl} implies that this is also a constant upper bound for $h(z_i)$ for $z_i$. We note that $z_i$ has degree 2. Thus the Northcott theorem implies our theorem.
\end{proof}

\begin{appendices}
\section{More on Complex Multiplication}
We use notations as in section 2. This appendix is mainly devoted for the proof of Proposition \ref{abdf}. Actually the results stated here are already known for elliptic curves. The results below may be already known to many experts. Because of a lack of literature for Drinfeld modules, the details are worked out here for the convenience of the reader.\\

Let $\phi_1$ and $\phi_2$ both be rank $r$ Drinfeld $A$-module over $\Cinf$ with complex multiplication. Let $F/k$ be a finite field extension such that both $\phi_1$ and $\phi_2$ are defined over $F$ with everywhere good reduction. Let $R$ be the integral closure of $A$ in $F$ and denote $\mathscr{M}_1$, $\mathscr{M}_2$ the minimal model over $R$ of $\phi_1$, $\phi_2$ respectively. If $f:\mathscr{M}_1\ra \mathscr{M}_2$ is an isogeny, then it induces an isogeny of Drinfeld modules after taking reduction at a prime $ v \in \spec(R)$.

\begin{lem}\label{hinj}
Let $\phi_1^v$ and $\phi_2^v$ be Drinfeld modules over $\resk (v)$ obtained by taking reduction on $\mathscr{M}_1$ and $\mathscr{M}_2$ respectively at $v$, where $\resk(v)$ is the residue field at $v$.  Let $\Hom_F(\phi_1,\phi_2)$ denote the set of isogenies over $F$ between $\phi_1$ and $\phi_2$, similarly for $\Hom_{\resk(v)}(\phi_1^v,\phi_2^v)$. Then there is a canonical injection of $A$-modules:
$$\Hom_F(\phi_1,\phi_2)\xhookrightarrow{} \Hom_{\resk(v)}(\phi_1^v,\phi_2^v).$$
\end{lem}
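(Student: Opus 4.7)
The plan is to construct the reduction map by first extending $f$ to the minimal models, then specializing. Since both $\phi_1$ and $\phi_2$ have everywhere good reduction over $R$, Proposition \ref{gmp}(2) shows that any nonzero $F$-homomorphism $f:\phi_1\to\phi_2$ is an isogeny and extends uniquely to an isogeny $\tilde f:\mathscr{M}_1\to\mathscr{M}_2$ of the minimal models, while the zero map extends to zero. Pulling back along the closed immersion $\spec\resk(v)\hookrightarrow\spec R$ yields a homomorphism $\bar f:\phi_1^v\to\phi_2^v$. The assignment $f\mapsto\bar f$ is $A$-linear because the $A$-action on $\mathscr{M}_i$ is defined by the morphisms $\varphi_i(a)$ over $R$, and specialization commutes with composition of such morphisms.

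For injectivity, suppose $\bar f=0$. If $f\neq 0$, then $f$ is an isogeny with finite kernel. Set $\mathfrak p_0:=v\cap A$; since $A=\fq[t]$, I would choose a monic irreducible polynomial $p\in A$ with $(p)\neq\mathfrak p_0$ and $(p)$ coprime to the annihilator ideal of $\ker f$. Then for every $n\geq 1$ the endomorphism $\varphi_i(p^n)$ of $\mathscr{M}_i$ has derivative at the origin equal to $p^n$, which is a unit in $\resk(v)$ by construction. Hence $\mathscr{M}_i[p^n]\to\spec R$ is finite étale, and the Henselian property of étale schemes gives a bijective specialization map
\begin{equation*}
\phi_i[p^n](F^{\sep})\longrightarrow\phi_i^v[p^n](\resk(v)^{\sep})
\end{equation*}
for $i=1,2$.

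Now $f$ restricted to $\phi_1[p^n](F^{\sep})$ is injective by the choice of $p$, so there exists $P\neq 0$ with $f(P)\neq 0$. Its specialization $\bar P$ is nonzero, and by functoriality of the reduction $\bar f(\bar P)=\overline{f(P)}$. Since $\bar f=0$, this forces $\overline{f(P)}=0$; applying injectivity of specialization on the $p^n$-torsion of $\phi_2$ yields $f(P)=0$, a contradiction. Thus $f=0$, which establishes the injection.

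The main obstacle I anticipate is verifying cleanly that the $p^n$-torsion of a Drinfeld module with good reduction at $v$ is finite étale over $R$ whenever $(p)\neq v\cap A$. I would appeal to the fact that the derivative of $\phi_{p^n}$ at the origin is the scalar $p^n$, which is invertible in $\resk(v)$ by the choice of $p$, so $\phi_{p^n}$ is étale at the origin and, by translation under the group scheme structure, étale everywhere on the smooth model $\mathscr{M}_i$; this is the Drinfeld-module analogue of the classical fact that the $\ell$-adic Tate module of an elliptic curve is unramified at primes of good reduction not lying over $\ell$.
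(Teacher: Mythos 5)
Your proof is correct in substance but takes a genuinely different (and longer) route than the paper's. Both of you begin the same way: use Proposition \ref{gmp}(2) together with \cite[Proposition 2.5]{tag1993} to identify $\Hom_F(\phi_1,\phi_2)$ with $\Hom_R(\mathscr M_1,\mathscr M_2)$, and define the reduction map by pulling back the model isogeny to the fibre at $v$. After that the arguments diverge. The paper's injectivity argument is a one-liner: a finite morphism $\tilde f=\sum f_i\tau^i$ between line bundles has, by \cite[Proposition 1.2.6]{leh09}, a top coefficient $f_n$ that is a nowhere-vanishing section of the relevant invertible sheaf, so its image in $\resk(v)$ is automatically nonzero and $\bar f\neq 0$. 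You instead run the classical ``Tate module is unramified at primes of good reduction away from the residue characteristic'' argument: choose a prime $p$ away from $v\cap A$ and from the annihilator of $\ker f$, observe that $\mathscr M_i[p^n]$ is finite \'etale near $v$ because the derivative $p^n$ is a unit in $\resk(v)$ and one translates under the group law, deduce that specialization on $p^n$-torsion is bijective, and conclude $\bar f\neq 0$ by tracking a torsion point not killed by $f$. Both approaches are valid; yours is conceptually closer to the elliptic-curve analogue and does not need the line-bundle criterion for finiteness, but it is considerably longer and requires a couple of points you flag only loosely: you should work over the Henselization (or completion) of $R_v$ rather than over $R$ to make the bijectivity of the specialization map literally true, and the ``annihilator ideal of $\ker f$'' should be spelled out as the nonzero ideal of $A$ acting through $\phi_1$ that kills the finite group scheme $\ker f$. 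The paper's proof sidesteps all of this machinery by exploiting that, for Drinfeld modules over a base, finiteness of a morphism of models is \emph{equivalent} to invertibility of the leading coefficient, a fact with no such clean abelian-variety counterpart.
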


\begin{proof}
It is easy to check the map is a morphism of $A$-modules. By Proposition \ref{gmp} and \cite[Proposition 2.5]{tag1993} we obtain
$$\Hom_F(\phi_1,\phi_2)=\Hom_R(\mathscr{M}_1,\mathscr{M}_2).$$
Let $f\in \Hom_R(\mathscr{M}_1,\mathscr{M}_2)$ be an isogeny. Then it is finite, which implies it has leading coefficient that is non-zero after reduction at $v$. This proves the injectivity.
\end{proof}

From now on, we assume $A=\fq[t]$ and $\phi$ is a rank $2$ Drinfeld $A$-module over $\Cinf$ with CM by the maximal order $\Ocal_K$, where $K\subset \Cinf$ is a quadratic imaginary field over $k$. Further we assume $\phi$ is obtained through the $A$-lattice $\Ocal_K$ and $\phi$ is defined over $F$. Let $P\in A$ be a prime element. 

\begin{thm}\label{cord}
For any place $v\in \spec(R)$, the reduction of $\phi$ at $v$ is ordinary if and only if $v\cap k$ splits in $\Ocal_K$.
\end{thm}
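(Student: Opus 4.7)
The plan is to combine the injection of endomorphism rings from Lemma \ref{hinj} with the classification of endomorphism algebras of rank $2$ Drinfeld $A$-modules over finite fields (due to Drinfeld), together with the Albert--Brauer--Hasse--Noether criterion for embedding a quadratic field into a quaternion algebra.

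First I would set $P := v\cap A$, which is a monic prime of $A$, and apply Lemma \ref{hinj} to the identity isogeny $\phi\to\phi$ to obtain an injection $\Ocal_K=\End_F(\phi)\hookrightarrow \End_{\resk(v)}(\phi^v)$ of $A$-algebras. Tensoring with $k$ gives an embedding of $K$ into the endomorphism algebra $D:=\End(\phi^v)\otimes_A k$.

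Next I would invoke the classification of endomorphism algebras of rank $2$ Drinfeld $A$-modules over a finite field of $A$-characteristic $P$: if $\phi^v$ is supersingular (height $2$), then $D$ is the quaternion algebra $B_P$ over $k$ ramified exactly at $\{\infty,P\}$; if $\phi^v$ is ordinary (height $1$), then $D$ is an imaginary quadratic extension $K'/k$ in which the prime $P$ splits. (The former is a direct analogue of the elliptic curve case, and the latter reflects that in the ordinary case the Frobenius $\pi=\tau^{[\resk(v):\fq]}$ generates $K'$ and the ideal $(\pi)\subset \Ocal_{K'}$ is a nontrivial factor of $(P^m)$, forcing $P$ to split.)

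For the ``if'' direction, assume $P$ splits in $\Ocal_K$ and suppose toward a contradiction that $\phi^v$ is supersingular. Then $K\hookrightarrow B_P$, so by the Albert--Brauer--Hasse--Noether embedding criterion every ramified place of $B_P$ must be non-split in $K$. Since $P$ is ramified in $B_P$ but splits in $K$, we have the desired contradiction, so $\phi^v$ must be ordinary. For the ``only if'' direction, suppose $\phi^v$ is ordinary. Then $D=K'$ is an imaginary quadratic extension in which $P$ splits, and the embedding $K\hookrightarrow K'$ combined with $[K:k]=[K':k]=2$ forces $K=K'$; hence $P$ splits in $\Ocal_K$.

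The main obstacle will be giving a clean reference (or brief proof) for Step 2, especially for the fact that in the ordinary case $P$ splits in $K'$: this is the Drinfeld-module analogue of Deuring's analysis of the Frobenius $\pi$ and requires either a direct computation with the norm $N_{K'/k}(\pi)$ being a power of $P$ (which rules out $P$ inert or ramified in $K'$), or a reference to the Dieudonn\'e-module/$P$-divisible group classification in the literature on Drinfeld modules over finite fields.
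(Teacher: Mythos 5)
Your proof is correct in outline and shares the same first step as the paper — use Lemma \ref{hinj} to embed $K\hookrightarrow D=\End_{\resk(v)}(\phi^v)\otimes_A k$ and then appeal to the classification of endomorphism algebras of rank-$2$ Drinfeld modules over finite fields (Goss, Prop.\ 4.12.17) — but the hard direction (``$P$ splits $\Rightarrow$ ordinary'') is handled by a genuinely different argument. You use the local embedding obstruction: if $\phi^v$ is supersingular then $D$ is the quaternion algebra $B_P$ ramified exactly at $\{\infty,P\}$, and the Albert--Brauer--Hasse--Noether criterion (or simply: $K\otimes_k k_P$ must embed into the division algebra $B_P\otimes_k k_P$, hence must be a field) forces $P$ to be non-split in $K$, contradicting the hypothesis. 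The paper instead derives $E:=k(\bar\pi)=k$ from the dimension formula $2=t\,[E:k]$ with $t^2=\dim_E D$, hence $\bar\pi\in A\subset\Ocal_K$, and then passes to the Drinfeld $\Ocal_K$-module $\psi$ (extension of $\phi$ by the CM action) to contradict the uniqueness (Goss, Thm.\ 4.12.8, applied to $\psi^v$) of the prime of $K$ containing the Frobenius $\bar\pi$, since $\bar\pi\in P\Ocal_K=\mathcal{P}\mathcal{P}'$. Your ABHN route is the exact analogue of Deuring's classical argument for elliptic curves and is arguably cleaner; the paper's route is more self-contained in that it avoids invoking the embedding criterion and stays within Goss's structure theory, at the cost of the slightly delicate step of extending $\phi$ to a Drinfeld $\Ocal_K$-module. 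For the easy direction both arguments are essentially identical and both rest on the same input you flag as the ``main obstacle'': that an ordinary $\phi^v$ has $D$ an imaginary quadratic field in which $P$ splits — this is precisely the content of Goss's Prop.\ 4.12.17, which the paper cites. One small point worth noting: to assert $D=B_P$ in the supersingular case you should first observe that $\dim_k D\in\{2,4\}$ and that $\dim_k D=2$ together with $K\hookrightarrow D$ would force $D=K$, hence (by Goss's characterisation) ordinary reduction — so supersingular implies $\dim_k D=4$; the paper makes this explicit while your sketch takes it as given.
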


\begin{proof}
Let $P=v\cap k$ and $\bar{\pi}$ be the Frobenius morphism of $\phi^v$. From Lemma \ref{hinj} we see 
$$K=\End_F(\phi_1)\otimes_A k\xhookrightarrow{} \End_{\resk(v)}(\phi^v)\otimes_A k:=D.$$
By \cite[Proposition 4.12.17]{go98} we deduce that $\phi$ is ordinary at $v$ if and only if $K=\End_{\resk(v)}(\phi^v)\otimes_A k$. This is equivalent to saying that $\Ocal_K=\End_{\resk(v)}(\phi^v)$. We can embed $A$ into $\End_{\resk(v)}(\phi^v)$ via the Drinfeld module $\phi^v$. Let $E:=k(\bar{\pi})$. Then there is only one prime $\mathscr{P}$ of $E$ containing $\bar{\pi}$ and $\mathscr{P}$ lies over $P$ \cite[Theorem 4.12.8]{go98}.\\

If $\phi$ has ordinary reduction at $v$, then $E\cong K$ as $\bar{\pi}\notin k$. Again by \cite[Proposition 4.12.17]{go98} there are more than one primes of $E$ lying over $P$. Thus $P$ splits in $\Ocal_K$. Next we show the other way around. First, we write $P\Ocal_K=\mathcal{P}\mathcal{P}'$. Assume the reduction of $\phi$ at $v$ is supersingular. So it is a consequence that $\dim_kD=r^2=4$. Since $\phi$ has good reduction at $v$, $\phi^v$ has rank 2 over $\resk(v)$. Thus we have $2=\text{rank}(\phi^v)=t\cdot [E:k]$, where $t$ is an integer such that $t^2=\dim_E D$. As $\dim_kD=4=\dim_ED\cdot [E:k]$, we have $t=2$. Therefore, $E=k$. In particular, $\bar{\pi}\in A$. In this case, it is clear that $\mathscr{P}=(P)\subset A$. On the other hand, we can obtain a Drinfeld $\Ocal_K$-module $\psi$ over $F$ by extending $\phi$ to $\End_F(\phi)$. By taking reduction at $v$ again, we obtain a Drinfeld $\Ocal_K$-module $\psi^v$ over $\resk(v)$. It is trivial $\bar{\pi}$ is the Frobenius element of $\psi^v$. As $\bar{\pi}\in A\subset K$, there is only one prime ideal of $K$ containing $\bar{\pi}$. However, $\bar{\pi}\in P\Ocal_K=\mathcal{P}\mathcal{P}'$. This is a contradiction.
\end{proof}

\begin{cor}
If $P\Ocal_K=\mathscr{P}\mathscr{P}'$ where $\mathscr{P}$ and $\mathscr{P}'$ are different prime ideals of $\Ocal_K$ both lying over $P$, then for any place $v\in \spec(R)$ over $P$ the natural morphism $\End_F(\phi)\ra \End_{\resk(v)}(\phi^v)$ is an isomorphism.
\end{cor}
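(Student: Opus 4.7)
The plan is to combine Lemma \ref{hinj} (which already gives injectivity) with Theorem \ref{cord} (which guarantees ordinary reduction under the splitting hypothesis) and then invoke the maximality of $\Ocal_K$. More precisely, I would proceed in three short steps.

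First, by Lemma \ref{hinj} the natural reduction map
\[
\End_F(\phi) \xhookrightarrow{} \End_{\resk(v)}(\phi^v)
\]
is an injective homomorphism of $A$-modules (in fact of $A$-algebras). Under our hypothesis $\End_F(\phi)=\Ocal_K$, so the image is a copy of $\Ocal_K$ inside the endomorphism ring of the reduction.

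Second, since $P=v\cap k$ splits in $\Ocal_K$ by assumption, Theorem \ref{cord} tells us that the reduction $\phi^v$ is ordinary at $v$. By \cite[Proposition 4.12.17]{go98} used in the proof of that theorem, this is equivalent to the equality $\End_{\resk(v)}(\phi^v)\otimes_A k = K$. Hence $\End_{\resk(v)}(\phi^v)$ is an $A$-order in the quadratic imaginary field $K$, and it sits between $A$ and $\Ocal_K$; more importantly, via the injection above it contains the maximal order $\Ocal_K$.

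Third, and this is where the argument closes rather than where any real difficulty lies, we have
\[
\Ocal_K \subseteq \End_{\resk(v)}(\phi^v) \subseteq K,
\]
with $\End_{\resk(v)}(\phi^v)$ an $A$-order in $K$. Since $\Ocal_K$ is the maximal $A$-order of $K$, the two orders must coincide, which gives the required surjectivity and completes the proof. There is no genuine obstacle here beyond correctly identifying $\End_{\resk(v)}(\phi^v)\otimes_A k$ with $K$ via the embedding coming from $\End_F(\phi)\otimes_A k = K$; the only point to be careful about is that the injection of Lemma \ref{hinj} is compatible with the $k$-algebra structures, so that the chain of inclusions above takes place inside a single copy of $K$ rather than in two a priori different quadratic extensions.
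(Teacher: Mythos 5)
Your proof is correct and takes essentially the same route the paper intends: the paper offers no separate argument for the corollary because the key equivalence is already asserted inside the proof of Theorem~\ref{cord}, where the paper writes that ordinarity (i.e.\ $K=\End_{\resk(v)}(\phi^v)\otimes_A k$) ``is equivalent to saying that $\Ocal_K=\End_{\resk(v)}(\phi^v)$.'' You have simply unpacked the nontrivial direction of that equivalence---using the injection of Lemma~\ref{hinj} to place $\Ocal_K$ inside $\End_{\resk(v)}(\phi^v)$ and then appealing to maximality of $\Ocal_K$ among $A$-orders of $K$---which is exactly the step the paper leaves implicit. Your closing caveat about compatibility of $k$-algebra structures is also well taken and is the genuine (if mild) point of care.
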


\noindent {\large \textbf{Reduction process}} \medskip

Let $\mathscr{M}$ be the minimal model of $\phi$. We set $\mathscr{M}[P]:=\text{Ker}(\phi_P:\phi\ra \phi)$. We suppose moreover that $P\Ocal_K=\mathscr{P}\mathscr{P}'$ with $\mathscr{P}$ and $\mathscr{P}'$ different. Then it is easy to see:
$$\mathscr{M}[P](\bar{A})=\mathscr{M}[P](\Cinf)\cong \Ocal_K/P\Ocal_K=\mathscr{P}/P\Ocal_K\oplus \mathscr{P}'/P\Ocal_K\cong \Ocal_K/\mathscr{P}\oplus \Ocal_K/\mathscr{P}'.$$
There is a natural morphism $\theta: \mathscr{M}[P](\bar{A})\ra \phi^v[P](\overline{\fq})$ by taking reduction at $v\in \spec(R)$ such that $v$ lies over $P$.

\begin{thm}\label{rdp}
If we assume further that $\pi\in \mathscr{P}\subset \Ocal_K=\End_F(\phi)=\End_R(\mathscr{M})$ is the lifting of the Frobenius element $\bar{\pi}$, then $\theta$ is a surjection, and the kernel of $\theta$ is isomorphic to $\Ocal_K/\mathscr{P}$.
\end{thm}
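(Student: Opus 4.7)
The strategy is to factor $\theta$ through the connected-\'etale decomposition of the finite flat $R_v$-group scheme $\mathscr{M}[P]$ and then use $\mathcal{O}_K$-equivariance together with the lifted Frobenius $\pi$ to pin down which of the two cyclic summands of $\mathscr{M}[P]=\mathcal{O}_K/\mathscr{P}\oplus \mathcal{O}_K/\mathscr{P}'$ is the kernel.

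First, by Theorem~\ref{cord}, the splitting $P\mathcal{O}_K=\mathscr{P}\mathscr{P}'$ forces $\phi$ to have ordinary reduction at $v$. Passing to the strict henselization of the completion of $R$ at $v$, the $P$-torsion $\mathscr{M}[P]$ admits a short exact sequence
\begin{equation*}
0 \to \mathscr{M}[P]^{\circ} \to \mathscr{M}[P] \to \mathscr{M}[P]^{\mathrm{\acute{e}t}} \to 0,
\end{equation*}
which, being functorial in $\mathscr{M}$, is preserved by the full $\mathcal{O}_K=\End_R(\mathscr{M})$-action. The connected part has no non-trivial $\overline{A}$-points (its geometric points lie in the maximal ideal of the formal group of $\mathscr{M}$ at $v$ and so reduce to $0$), while $\theta$ restricted to $\mathscr{M}[P]^{\mathrm{\acute{e}t}}(\overline{A})$ is an $\mathcal{O}_K$-equivariant bijection onto $\phi^v[P](\overline{\mathbb{F}_q})$. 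Hence $\ker\theta=\mathscr{M}[P]^{\circ}(\overline{A})$. Since $\phi^v$ is ordinary of rank $2$, its geometric $P$-torsion has order $|P|=|A/P|=|\mathcal{O}_K/\mathscr{P}|$, so the kernel and the image both have size $|P|$; in particular $\theta$ is surjective.

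To identify the kernel with $\mathcal{O}_K/\mathscr{P}$, note that $\mathcal{O}_K$-stability forces $\mathscr{M}[P]^{\circ}$ and $\mathscr{M}[P]^{\mathrm{\acute{e}t}}$ each to coincide with one of the summands $\mathcal{O}_K/\mathscr{P}$ or $\mathcal{O}_K/\mathscr{P}'$. To distinguish them I track the action of $\pi$: it acts as $0$ on $\mathcal{O}_K/\mathscr{P}$ (since $\pi\in\mathscr{P}$) and as a unit on $\mathcal{O}_K/\mathscr{P}'$ (indeed $\pi\in\mathscr{P}\cap\mathscr{P}'=P\mathcal{O}_K$ would force $\bar{\pi}=0$ in $\End(\phi^v)$, contradicting that $\bar\pi$ is the nonzero Frobenius). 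On the \'etale quotient, however, $\pi$ reduces to $\bar\pi$, the geometric Frobenius of $\phi^v$, which acts as an automorphism of the finite \'etale group $\phi^v[P]^{\mathrm{\acute{e}t}}(\overline{\mathbb{F}_q})$. So $\pi$ is a unit on $\mathscr{M}[P]^{\mathrm{\acute{e}t}}$, forcing $\mathscr{M}[P]^{\mathrm{\acute{e}t}}\cong\mathcal{O}_K/\mathscr{P}'$ and therefore $\ker\theta\cong \mathcal{O}_K/\mathscr{P}$.

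The main obstacle is the structural input in the first step: one needs the connected-\'etale sequence for the finite flat commutative group scheme $\mathscr{M}[P]$ over the complete local base $R_v$ (with perfect residue field after strict henselization), together with its functoriality in endomorphisms in order to guarantee $\mathcal{O}_K$-stability. This is a classical fact in the theory of finite flat group schemes, but it must be invoked with care in the Drinfeld setting; once it is available, the remainder is a short $\mathcal{O}_K$-module bookkeeping using the splitting $\mathcal{O}_K/P\mathcal{O}_K=\mathcal{O}_K/\mathscr{P}\oplus\mathcal{O}_K/\mathscr{P}'$.
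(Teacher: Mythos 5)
Your proof is correct and takes a genuinely different route from the one in the paper. The paper's argument works directly on $\overline{\fq}$-points: using $\End_{\resk(v)}(\phi^v)\cong\Ocal_K$ it presents $\phi^v[P](\overline{\fq})$ as a cyclic $\Ocal_K$-module $\Ocal_K/I$, pins $I$ down to $\mathscr{P}$ or $\mathscr{P}'$ by comparing cardinalities against $\mathscr{M}[P](\Cinf)\cong\Ocal_K/P\Ocal_K$, and then rules out $I=\mathscr{P}$ because the Frobenius $\bar\pi$ acts nontrivially on the geometric $P$-torsion while any lift $\pi\in\mathscr{P}$ annihilates $\Ocal_K/\mathscr{P}$. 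You instead invoke the connected--\'etale sequence of the finite flat group scheme $\mathscr{M}[P]$ over the strictly henselized local base, identify $\ker\theta$ with the points of the connected part, and use that Frobenius is an automorphism of the \'etale quotient to locate which summand that is. This is precisely the ``canonical subgroup'' approach credited to Urs Hartl in Remark~(2) following the theorem, and, as that remark says, the two proofs share the same core (Frobenius is invertible on the \'etale torsion; $\pi$ kills $\Ocal_K/\mathscr{P}$). Your version handles the surjectivity of $\theta$ more transparently --- it drops out of the bijection of \'etale points with the special fibre, whereas the paper passes over the identification of the image with all of $\Ocal_K/\mathscr{P}'$ rather tersely --- at the cost of invoking the connected--\'etale machinery and its functoriality under $\Ocal_K=\End_R(\mathscr{M})$, where the paper's argument is more elementary given Theorem~\ref{cord} and its corollary.

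One small point: the parenthetical justification you give for $\pi\notin\mathscr{P}'$, namely that $\pi\in\mathscr{P}\cap\mathscr{P}'=P\Ocal_K$ would force $\bar\pi=0$ in $\End(\phi^v)$, is not correct as stated: the reduction map $\End_F(\phi)\to\End_{\resk(v)}(\phi^v)$ is injective (an isomorphism here), so $\pi\in P\Ocal_K$ would only give $\bar\pi\in P\Ocal_K$, certainly not $\bar\pi=0$. Fortunately this sentence is not load-bearing. You already know that $\pi$ acts invertibly on $\mathscr{M}[P]^{\mathrm{\acute{e}t}}$ because it specializes to the geometric Frobenius, an automorphism of a finite \'etale group, and you already know $\pi\in\mathscr{P}$ annihilates $\Ocal_K/\mathscr{P}$; those two facts alone force $\mathscr{M}[P]^{\mathrm{\acute{e}t}}\cong\Ocal_K/\mathscr{P}'$ and hence $\ker\theta\cong\Ocal_K/\mathscr{P}$, which is all you use. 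You may simply delete the faulty parenthetical.
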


\begin{proof}
Since $P\Ocal_K=\mathscr{P}\mathscr{P}'$, by Theorem \ref{cord} $ \phi^v[P](\overline{\fq})$ is non-trivial and finite. We embed $A$ into $\End_{\resk(v)}(\phi^v)=\Ocal_K$ via $\phi^v$. As an $\Ocal_K$-module, we have $ \phi^v[P](\overline{\fq})\cong \Ocal_K/I$ for some proper ideal $I\subset \Ocal_K$. Therefore, we have $\phi^v_P\cdot \Ocal_K/I=0$, which implies $P\in I$. It is clear $$\#\{\phi^v[P](\overline{\fq})\}< \#\{\mathscr{M}[P](\Cinf)\}.$$ 
So either $I=\mathscr{P}$ or $I=\mathscr{P}'$. Since $\bar{\pi}$ acts on $\phi^v[P](\overline{\fq})$ non-trivially, we see $I=\mathscr{P}'$. Therefore, the kernel of $\theta$ is $\mathscr{P}'/P\Ocal_K$ that is isomorphic to $\Ocal_K/\mathscr{P}$.
\end{proof}

\begin{rem}
	\begin{enumerate}[(1)]
\item If we identify $\Ocal_K=\End_F(\phi)=\End_R(\mathscr{M})$, then $\mathscr{P}$ is the collection of isogenies whose reduction has linear coefficient 0.
\item Another approach to Theorem \ref{rdp} using canonical subgroup of Drinfeld modules has be shown to the author by Urs Hartl. The two approaches essentially have the same core. 
	\end{enumerate}
\end{rem}

\end{appendices}

\bibliographystyle{siam}
\bibliography{bibliography}

\begin{thebibliography}{10}

\bibitem{bm1940}
{\sc M.~Becker and S.~Maclane}, {\em The minimum number of generators for
  inseparable algebraic extensions}, Bulletin of the American Mathematical
  Society, 46 (1940), pp.~182--186.

\bibitem{bg2006}
{\sc E.~Bombieri and W.~Gubler}, {\em Heights in Diophantine Geometry}, New
  Mathematical Monographs, Cambridge University Press, 2006.

\bibitem{breuer02}
{\sc F.~Breuer}, {\em Sur la conjecture d'Andr\'e-Oort et courbes modulaires de
  Drinfeld}, PhD thesis, Université Denis Diderot, 2002.

\bibitem{fb05}
{\sc F.~Breuer}, {\em The {A}ndr\'e-{O}ort conjecture for products of
  {D}rinfeld modular curves}, Journal für die reine und angewandte Mathematik,
  2005 (2005), pp.~115--144.

\bibitem{bpr21}
{\sc F.~Breuer, F.~Pazuki, and M.~H. Razafinjatovo}, {\em Heights and isogenies
  of {D}rinfeld modules}, Acta Arithmetica, 197 (2021), pp.~111 -- 128.

\bibitem{brown92}
{\sc M.~Brown}, {\em Singular moduli and supersingular moduli of {D}rinfeld
  modules.}, Inventiones Mathematicae, 110 (1992), pp.~419--440.

\bibitem{bhk2020}
{\sc Y.~Buli, P.~Habegger, and L.~K\"uhne}, {\em No singular modulus is a
  unit}, International Mathematics Research Notice, 2020 (2020),
  pp.~10005--10041.

\bibitem{cu04}
{\sc L.~Clozel and E.~Ullmo}, {\em \'Equidistribution des points de Hecke},
  Contributions to automorphic forms, geometry, and number theory, John Hopkins
  Univ. Press, Baltimore, MD, 2004, pp.~193--254.

\bibitem{gek99}
{\sc E.-U. Gekeler}, {\em {A survey on Drinfeld modular forms}}, Turkish
  Journal of Mathematics, 23 (1999), pp.~485--518.

\bibitem{go98}
{\sc D.~Goss}, {\em Basic Structures of Function Field Arithmetic},
  Springer-Verlag Berlin Heidelberg, 1998.

\bibitem{ha14}
{\sc P.~Habegger}, {\em {Singular moduli that are algebraic units}}, Algebra
  and Number Theory, 9 (2015), pp.~1515 -- 1524.

\bibitem{dh1979}
{\sc D.~Hayes}, {\em Explicit class field theory in global function fields},
  Studies in algebra and number theory, Academic Press, New York-London, 1979,
  pp.~173--217.

\bibitem{ih2006}
{\sc Y.~Ihara}, {\em On the Euler-Kronecker constants of global fields and
  primes with small norms}, Algebraic Geometry and Number Theory: In Honor of
  Vladimir Drinfeld's 50th Birthday, Birkh{\"a}user Boston, Boston, MA, 2006,
  pp.~407--451.

\bibitem{ill1985}
{\sc L.~Illusie}, {\em D\'eformations de groupes de {B}arsotti-{T}ate},
  Ast\'erisque, 127 (1985), pp.~151 -- 198.

\bibitem{leh09}
{\sc T.~Lehmkuhl}, {\em Compactification of the Drinfeld Modular Surfaces},
  vol.~197, Memoirs of the American Mathematical Society, 2009.

\bibitem{li2021}
{\sc Y.~Li}, {\em Singular units and isogenies between {CM} elliptic curves},
  Compositio Mathematica, 157 (2021), p.~1022–1035.

\bibitem{nt1991}
{\sc Y.~Nakkajima and Y.~Taguchi}, {\em A generalization of the
  {C}howla-{S}elberg formula}, Journal für die reine und angewandte
  Mathematik, 419 (1991), pp.~119 -- 124.

\bibitem{ray1974}
{\sc M.~Raynaud}, {\em Sch\'emas en groupes de type (p,...,p)}, Bulletin de la
  Soci\'et\'e Math\'ematique de France, 102 (1974), pp.~241--280.

\bibitem{ray1985}
\leavevmode\vrule height 2pt depth -1.6pt width 23pt, {\em Hauteurs et
  isog\'enies}, Ast\'erisque, 127 (1985), pp.~199 -- 234.

\bibitem{rosen2002}
{\sc M.~Rosen}, {\em Number Theory in Function Fields}, Graduate Texts in
  Mathematics, Springer New York, 2002.

\bibitem{tag1991}
{\sc Y.~Taguchi}, {\em Semisimplicity of the {G}alois represebtations attached
  to {D}rinfeld modules over fields of ``finite characteristics"}, Duke
  Mathematical Journal, 62 (1991), pp.~593--599.

\bibitem{tag1993}
\leavevmode\vrule height 2pt depth -1.6pt width 23pt, {\em Semi-simplicity of
  {G}alois represebtations attached to {D}rinfeld modules over fields of
  ``infinite characteristics"}, Journal of Number Theory, 44 (1993),
  pp.~292--314.

\bibitem{wei17}
{\sc F.-T. Wei}, {\em Kronecker limit formula over global function fields},
  American Journal of Mathematics, 139 (2017), pp.~1047 -- 1084.

\bibitem{wei20}
{\sc F.-T. Wei}, {\em {On Kronecker terms over global function fields}},
  Inventiones Mathematicae, 220 (2020), pp.~847--907.

\end{thebibliography}

\Addresses

\end{document}